\documentclass[12pt]{amsart}

\topmargin 0.1in
\textheight8.5in
\oddsidemargin0.15in
\evensidemargin0.15in
\textwidth6.1in
\advance\hoffset by -0.5 truecm

\usepackage{amssymb}
\usepackage{amscd}
\usepackage{verbatim}
\usepackage{epsfig}
\usepackage{euscript}
\usepackage[colorlinks=true]{hyperref}
\hypersetup{urlcolor=blue, citecolor=red}

\newcommand{\Diff}{\operatorname{Diff}}
\newcommand{\Hom}{\operatorname{Hom}}

\newcommand{\supp}{\operatorname{supp}}

\def \R{\mathbb R}
\def \p{\partial}

\def\g{\gamma}
\def \g{\gamma}

\def \<{\langle}
\def \>{\rangle}

\def \tx{\tilde x}
\def \e{\epsilon}
\def \o{\omega}
\def \l{\lambda}
\def \S{\mathbb S}
\def \gxy{\gamma_{x,y,\lambda,\omega}}
\def \xy{x,y,\lambda,\omega}
\def \tM{\widetilde M}
\def \1N{\sum_{k=1}^{N}}
\def \A{\mathcal A}
\def \B{\mathcal B}
\def \mR{\mathbb R}
\newcommand{\abs}[1]{\lvert #1 \rvert}
\newcommand{\eps}{\varepsilon}

\setcounter{secnumdepth}{3}
\newtheorem{lemma}{Lemma}[section]
\newtheorem{prop}[lemma]{Proposition}
\newtheorem{thm}[lemma]{Theorem}
\newtheorem{cor}[lemma]{Corollary}

\newtheorem*{thm*}{Theorem}
\newtheorem*{prop*}{Proposition}
\newtheorem*{cor*}{Corollary}
\newtheorem*{conj*}{Conjecture}
\numberwithin{equation}{section}
\theoremstyle{remark}
\newtheorem{rem}[lemma]{Remark}
\newtheorem*{rem*}{Remark}
\theoremstyle{definition}

\newtheorem*{Def*}{Definition}

\begin{document}

\title{The geodesic X-ray transform with matrix weights}
\author[G. P. Paternain, M. Salo, G. Uhlmann, H. Zhou]{Gabriel P. Paternain, Mikko Salo, Gunther Uhlmann, Hanming Zhou}
\address{Department of Pure Mathematics and Mathematical Statistics, University of Cambridge, Cambridge CB3 0WB, UK}
\email{g.p.paternain@dpmms.cam.ac.uk}
\address{Department of Mathematics and Statistics, University of Jyv\"askyl\"a, 40014 Jyv\"askyl\"a, Finland}
\email{mikko.j.salo@jyu.fi}
\address{Department of Mathematics, University of Washington, Seattle, WA 98195-4350, USA; Department of Mathematics, University of Helsinki, Helsinki, Finland; HKUST Jockey Club Institute for Advanced Study, HKUST, Clear Water Bay, Kowloon, Hong Kong, China}
\email{gunther@math.washington.edu}
\address{Department of Pure Mathematics and Mathematical Statistics, University of Cambridge, Cambridge CB3 0WB, UK; {\it Current address:} Department of Mathematics, University of California Santa Barbara, Santa Barbara, CA 93106-3080, USA}
\email{hzhou@math.ucsb.edu}

\begin{abstract}
Consider a compact Riemannian manifold of dimension $\geq 3$ with strictly convex boundary, such that the manifold admits a strictly convex function. We show that the attenuated ray transform in the presence of an arbitrary connection and Higgs field is injective modulo the natural obstruction for functions and one-forms. We also show that the connection and the Higgs field are uniquely determined by the scattering relation modulo gauge transformations. 
The proofs involve a reduction to a local result showing that the geodesic X-ray transform with a matrix weight can be inverted locally near a point of strict convexity at the boundary, and a detailed analysis of layer stripping arguments based on strictly convex exhaustion functions. As a somewhat striking corollary, we show that these integral geometry problems can be solved on strictly convex manifolds of dimension $\geq 3$ having non-negative sectional curvature (similar results were known earlier in negative sectional curvature). We also apply our methods to solve some inverse problems in quantum state tomography and polarization tomography.

\end{abstract}

\maketitle

\section{Introduction} \label{sec_introduction}

Let $(M,g)$ be a compact Riemannian manifold with boundary $\partial M$ and dimension $n = \dim(M) \geq 2$. We denote by $SM$ the unit sphere bundle with canonical projection $\pi:SM\to M$ given by $(x,v)\mapsto x$.
Given an invertible matrix-valued function $W\in C^{\infty}(SM;GL(N,\mathbb C))$, we consider the following weighted geodesic X-ray transform of a smooth vector-valued function $h\in C^{\infty}(SM;\mathbb C^{N})$,
\[
(I_Wh)(\g)=\int W(\g(t),\dot{\g}(t))h(\g(t),\dot{\g}(t))\,dt,
\]
where $\gamma$ runs over all unit speed finite length geodesics with endpoints on $\partial M$. 
We shall study the invertibility of $I_{W}$ in two important particular instances:
\begin{enumerate}
\item $W$ arbitrary and $h=f\circ\pi$, where $f\in C^{\infty}(M,\mathbb C^{N})$;
\item the weight $W$ arises from an attenuation given by a $GL(N,\mathbb C)$-connection $A$ and a Higgs field $\Phi$ and $h=f\circ\pi+\alpha$, where $f\in C^{\infty}(M,\mathbb C^{N})$ and $\alpha$ is the restriction of a smooth $\mathbb C^N$-valued 1-form on $M$.
\end{enumerate}

In both cases we shall be able to prove that locally around a boundary point of strict convexity and $n\geq 3$, the ray transform $I_{W}$ is injective (up to the natural kernel in the second case) and we will derive global results under a suitable convexity condition. 

There are several motivations for considering the invertibility of $I_{W}$, but the main one driving the present paper is the following geometric inverse problem: given a $GL(N,\mathbb C)$-connection $A$ on the trivial bundle $M\times \mathbb C^N$, is it possible to determine $A$ from the knowledge of the parallel transport along geodesics with endpoints in $\partial M$? Let us describe this problem in more detail and state right away our main global result.

The geodesics going from $\partial M$ into $M$ can be parametrized by the set 
$$\partial_{\pm} (SM) = \{(x,v) \in SM \,;\, x \in \partial M, \pm\langle v,\nu \rangle \leq 0 \}$$
where $\nu$ is the outer unit normal vector to $\partial M$. For any $(x,v) \in SM$ we let $t \mapsto \gamma(t,x,v)$ be the geodesic starting from $x$ in direction $v$. We assume that $(M,g)$ is {\it nontrapping}, which means that the time $\tau(x,v)$ when the geodesic $\gamma(t,x,v)$ exits $M$ is finite for each $(x,v) \in SM$. 

Let $A$ be a $GL(N,\mathbb C)$-connection, this simply means that $A$ is an $N\times N$ matrix whose entries are smooth 1-forms with values in $\mathbb C$. 

Given a smooth curve $\gamma:[a,b]\to M$, the {\it parallel transport} along
$\gamma$ is obtained by solving the linear differential equation in $\mathbb C^N$:
\begin{equation}
\left\{\begin{array}{ll}
\dot{s}+A_{\gamma(t)}(\dot{\gamma}(t))s=0,\\
s(a)=w\in \mathbb C^N.\\
\end{array}\right.
\label{eq:2}
\end{equation}
The {\it parallel transport} along $\gamma$ is the linear isomorphism $P_{A}(\gamma):\mathbb C^N\to\mathbb C^N$  defined by $P_{A}(\gamma)(w):=s(b)$.
We may also consider the fundamental matrix solution $U:[a,b]\to GL(N,\mathbb C)$ of (\ref{eq:2}).
It solves
\begin{equation}
\left\{\begin{array}{ll}
\dot{U}+A_{\gamma(t)}(\dot{\gamma}(t))U=0,\\
U(a)=\mbox{\rm id}.\\
\end{array}\right.
\label{eq:3}
\end{equation}
Clearly $P_{A}(\gamma)(w)=U(b)w$.

Given $(x,v)\in \partial_{+}(SM)$ we solve (\ref{eq:3}) along $\gamma(t,x,v)$ on the interval
$[a,b]=[0,\tau(x,v)]$ and define a map $C_{A}:\partial_{+}(SM)\to GL(N,\mathbb C)$ by
\[C_{A}(x,v):=U(\tau(x,v)).\] 
We call $C_{A}$ the {\it scattering data} of the connection and by the discussion above, it encapsulates
the parallel transport information along geodesics connecting boundary points.
It is natural to incorporate a potential or Higgs field into the problem by considering a \emph{pair} $(A,\Phi)$, where $A$ is a $GL(N,\mathbb C)$-connection and $\Phi$ is a smooth
map $M\to \mathbb C^{N\times N}$. We can solve a transport equation along geodesics:
\begin{equation*}
\left\{\begin{array}{ll}
\dot{U}+[A_{\gamma(t)}(\dot{\gamma}(t))+\Phi(\gamma(t))]U=0,\\
U(0)=\mbox{\rm id}\\
\end{array}\right.
\end{equation*}
and define scattering data $C_{A,\Phi}$ as above. The inverse problem of recovering the pair $(A,\Phi)$ from $C_{A,\Phi}$ has a natural gauge equivalence:
if $u:M\to GL(N,\mathbb C)$ is smooth  and $u|_{\partial M}=\mbox{\rm id}$ then
\[C_{A,\Phi}=C_{u^{-1}du+u^{-1}Au,u^{-1}\Phi u}.\]

Here is our main global result:

\begin{thm} Let $(M,g)$ be a compact Riemannian manifold of dimension $\geq 3$ with strictly convex boundary, and suppose $(M,g)$ admits a smooth
strictly convex function. 
Let $(A,\Phi)$ and $(B,\Psi)$ be two pairs such that $C_{A,\Phi}=C_{B,\Psi}$.
Then there is a smooth map $u:M\to GL(N,\mathbb C)$ such that $u|_{\partial M}=\mbox{\rm id}$, $B=u^{-1}du+u^{-1}Au$ and $\Psi=u^{-1}\Phi u$.
\label{thm:mainglobal}
\end{thm}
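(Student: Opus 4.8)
The plan is to reduce the theorem, via the classical pseudo-linearization identity, to the already established global injectivity (modulo the natural kernel) of the matrix weighted ray transform of case (2), applied to the bundle of $N\times N$ matrices over $M$.

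The first step is the pseudo-linearization identity. Along any unit speed geodesic $\gamma:[0,\tau]\to M$ with $\gamma(0)\in\partial M$, let $U_{A,\Phi}$ and $U_{B,\Psi}$ be the fundamental matrix solutions of the transport equations for $(A,\Phi)$ and $(B,\Psi)$, normalized to $\mathrm{id}$ at $t=0$. A one-line computation gives
\[
\frac{d}{dt}\bigl(U_{A,\Phi}(t)^{-1}U_{B,\Psi}(t)\bigr)=U_{A,\Phi}(t)^{-1}\bigl[(A-B)_{\gamma(t)}(\dot\gamma(t))+(\Phi-\Psi)(\gamma(t))\bigr]U_{B,\Psi}(t),
\]
and integrating over $[0,\tau]$, using that $C_{A,\Phi}=C_{B,\Psi}$ forces $U_{A,\Phi}(\tau)=U_{B,\Psi}(\tau)$ so that the left side integrates to $0$, we obtain
\[
\int_0^\tau U_{A,\Phi}(t)^{-1}\bigl[(A-B)_{\gamma(t)}(\dot\gamma(t))+(\Phi-\Psi)(\gamma(t))\bigr]U_{B,\Psi}(t)\,dt=0
\]
for every such $\gamma$. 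To read this as a matrix weighted ray transform I would work on the rank $N^2$ bundle $M\times\mathbb{C}^{N\times N}$ with the $GL(N^2,\mathbb{C})$-connection $\hat A$ and Higgs field $\hat\Phi$ defined by $\hat A(v)Z=A(v)Z-ZB(v)$ and $\hat\Phi Z=\Phi Z-Z\Psi$. Its fundamental solution along $\gamma$ is $Z\mapsto U_{A,\Phi}(t)ZU_{B,\Psi}(t)^{-1}$, so the operator $Z\mapsto U_{A,\Phi}(t)^{-1}ZU_{B,\Psi}(t)$ in the identity is exactly the weight $\hat W$ attached to the attenuation $(\hat A,\hat\Phi)$, regarded as a function on $SM$ by flowing from the entry point of $\partial M$ to $(x,v)$ along the geodesic. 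Hence, writing $h:=(\Phi-\Psi)\circ\pi+(A-B)$, which is of the form $f\circ\pi+\alpha$ with $f\in C^\infty(M;\mathbb{C}^{N\times N})$ and $\alpha$ a $\mathbb{C}^{N\times N}$-valued $1$-form, the identity says $I_{\hat W}h=0$. One routine point here: to make $\hat W$ a smooth weight up to the glancing directions one first extends $(M,g)$ together with all of $A,B,\Phi,\Psi$ to a slightly larger manifold, which alters $\hat W$ only by a constant left factor along each geodesic and so does not affect the vanishing of the integral.

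Now the global result for case (2) under the standing hypothesis that $M$ admits a smooth strictly convex function, applied with $N$ replaced by $N^2$ and the pair $(\hat A,\hat\Phi)$, shows that $h$ lies in the natural kernel: there is $q\in C^\infty(M;\mathbb{C}^{N\times N})$ with $q|_{\partial M}=0$ such that $h=\mathbf{X}q+\hat A(\dot\gamma)q+\hat\Phi q$ along geodesics. Separating the parts of $h$ odd and even in the velocity yields $A-B=dq+Aq-qB$ and $\Phi-\Psi=\Phi q-q\Psi$. Setting $u:=\mathrm{id}-q$, so $u|_{\partial M}=\mathrm{id}$, these rearrange to $du+Au=uB$ and $\Phi u=u\Psi$, which are the asserted gauge relations as soon as $u$ is invertible on all of $M$. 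For invertibility, observe that these relations say that along each geodesic $t\mapsto u(\gamma(t))$ solves the linear ODE $\dot w=-(A(\dot\gamma)+\Phi)w+w(B(\dot\gamma)+\Psi)$ with $w=\mathrm{id}$ at the entry point, while $t\mapsto U_{A,\Phi}(t)U_{B,\Psi}(t)^{-1}$ solves the same ODE with the same initial value; by uniqueness $u(\gamma(t))=U_{A,\Phi}(t)U_{B,\Psi}(t)^{-1}\in GL(N,\mathbb{C})$. Since $M$ is nontrapping every point lies on such a geodesic, so $u$ is $GL(N,\mathbb{C})$-valued, and it is smooth because $q$ is. This gives the gauge equivalence.

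The genuinely difficult analysis — local invertibility of the matrix weighted ray transform near a strictly convex boundary point, and the layer stripping argument driven by the strictly convex exhaustion function — is exactly the content of the case (2) global result invoked above, and is imported rather than reproved. Within the present deduction the only points that need care are the two flagged: that the weight $\hat W$ produced by pseudo-linearization is an admissible (smooth) weight so that the case (2) theorem genuinely applies to it (handled by the enlargement of $M$), and that the candidate gauge $u=\mathrm{id}-q$ is invertible globally and not merely near $\partial M$ (handled by the ODE-uniqueness argument). I expect the weight-regularity/enlargement bookkeeping to be the fussiest part to write down cleanly.
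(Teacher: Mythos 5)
Your proposal is correct and follows essentially the same route as the paper: pseudo-linearization producing the conjugation pair $\hat A Z = AZ-ZB$, $\hat\Phi Z=\Phi Z-Z\Psi$ on $N\times N$ matrices, vanishing of the corresponding attenuated transform of $(A-B)+(\Phi-\Psi)$, and then the global linear result (Theorem \ref{thm:maingloballinear}) plus uniqueness for the transport ODE to identify the gauge $u=\mathrm{id}-q$ with $U_{A,\Phi}U_{B,\Psi}^{-1}$, which is exactly the scheme the paper carries out (locally in Section \ref{nonlinear connection}, with the global details deferred to \cite[Section 8]{PSU12}). Your explicit ODE-uniqueness argument for the invertibility of $u$ and the extension remark about the weight match the paper's treatment.
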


It is easy to see that the existence of a strictly convex function $f$ implies that $(M,g)$ is nontrapping (and hence contractible). Section \ref{foliationcondition} provides
several geometric conditions that imply the existence of a strictly convex function, including in particular
manifolds of non-negative sectional curvature and simply connected manifolds without focal points.
 The class of manifolds of non-negative curvature shows that, in contrast to many earlier results, Theorem \ref{thm:mainglobal} allows for the metric $g$ to have conjugate points.
Consider for instance a complete open manifold $(N,g)$ of positive curvature (e.g.\ a strictly convex hypersurface in Euclidean space, like the paraboloid $z=x^2+y^2$). In \cite{GW} it is shown that such a manifold admits a strictly convex function $f:N\to\mathbb R$ that is an exhaustion, i.e.\ for each $c\in \mathbb R$, $f^{-1}(-\infty,c]$ is a compact subset of $N$. Thus for $c>\inf f$, the manifolds $M=f^{-1}(-\infty,c]\subset N$ provide examples to which Theorem \ref{thm:mainglobal} applies. 
 Given a unit speed geodesic $\gamma:\mathbb{R}\to N$, \cite[Lemma 1]{GM} shows that there is $\tau>0$ such that $\gamma|_{[-\tau,\tau]}$ has index greater than or equal to $n-1$. Hence for $c$ large enough $M$ will always have conjugate points.
 
 It seems worthwhile to state the above case as a separate corollary.

 \begin{cor} 
 Let $(M,g)$ be a compact Riemannian manifold of dimension $\geq 3$ with strictly convex boundary and 
 non-negative sectional curvature.
Let $(A,\Phi)$ and $(B,\Psi)$ be two pairs such that $C_{A,\Phi}=C_{B,\Psi}$.
Then there is a smooth map $u:M\to GL(N,\mathbb C)$ such that $u|_{\partial M}=\mbox{\rm id}$, $B=u^{-1}du+u^{-1}Au$ and $\Psi=u^{-1}\Phi u$.
\end{cor}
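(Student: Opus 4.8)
The plan is to deduce the Corollary as an immediate consequence of Theorem~\ref{thm:mainglobal}. The only thing that must be supplied is the geometric fact that a compact Riemannian manifold of dimension $\geq 3$ with strictly convex boundary and non-negative sectional curvature admits a smooth strictly convex function. Once this is in hand, the hypotheses of Theorem~\ref{thm:mainglobal} are met verbatim, and the conclusion---the existence of a smooth $u:M\to GL(N,\mathbb C)$ with $u|_{\partial M}=\mathrm{id}$, $B=u^{-1}du+u^{-1}Au$ and $\Psi=u^{-1}\Phi u$---follows directly.

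The key step, therefore, is to produce the strictly convex function. The approach I would take is to exploit the classical fact (of Greene--Wu type, as referenced in the excerpt via \cite{GW}) that on a manifold of non-negative sectional curvature the squared distance function $x\mapsto d(x,p)^2$ from a fixed point, or an appropriate smoothing thereof, is convex; more robustly, one can use that the function $f(x) = -\rho(x)$ or a suitable function built from a Busemann-type construction has positive-definite Hessian. Concretely: since $M$ has strictly convex boundary, one can take a point $p$ in the interior (or slightly outside $M$ in a collar extension) and consider $\varphi(x)=d(x,p)^2$; non-negative curvature forces $\mathrm{Hess}\,\varphi \geq 2g$ along minimizing geodesics via the Hessian comparison theorem, so after a small perturbation to handle the cut locus and to ensure smoothness, one obtains a smooth function with positive-definite Hessian on all of $M$, which is precisely the definition of a strictly convex function used here. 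The cleanest route, matching the paper's own references, is to cite the result recorded in Section~\ref{foliationcondition}, where the excerpt explicitly states that non-negative sectional curvature implies the existence of a strictly convex function.

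I would then simply write: by Section~\ref{foliationcondition} (or the cited comparison/Greene--Wu results), $(M,g)$ admits a smooth strictly convex function; hence Theorem~\ref{thm:mainglobal} applies and gives the desired $u$. The main---indeed only---obstacle is making precise the passage from ``non-negative sectional curvature + strictly convex boundary'' to ``strictly convex function,'' in particular dealing with the cut locus of the distance function and arranging global smoothness; but since the paper defers this to Section~\ref{foliationcondition}, for the Corollary itself there is essentially nothing further to prove beyond invoking that section together with Theorem~\ref{thm:mainglobal}.
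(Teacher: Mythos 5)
Your overall deduction is exactly the paper's: the Corollary is an immediate consequence of Theorem~\ref{thm:mainglobal} once one knows that a compact manifold with strictly convex boundary and $K \geq 0$ admits a smooth strictly convex function, and that fact is precisely Lemma~\ref{lemma_convex_anydimension}(c) in Section~\ref{foliationcondition}. So citing that lemma and applying Theorem~\ref{thm:mainglobal} is correct and is what the paper does.

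However, the justification you sketch for the geometric fact is wrong in a way worth flagging. You claim that non-negative sectional curvature forces $\mathrm{Hess}\, d(\cdot,p)^2 \geq 2g$ via Hessian comparison; this is backwards. The Hessian comparison theorem gives a \emph{lower} bound $\mathrm{Hess}(r) \geq \frac{1}{r} g$ on $(\nabla r)^{\perp}$ under $K \leq 0$ (or, as in Lemma~\ref{lemma_convex_anydimension}(b), under the no-focal-points hypothesis), which is what makes $d(\cdot,p)^2$ strictly convex in that setting. Under $K \geq 0$ the comparison goes the other way ($\mathrm{Hess}(r) \leq \frac{1}{r}g$), and $d(\cdot,p)^2$ can fail to be convex (think of a large cap of a sphere). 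The paper's proof of (c) instead goes through Eschenburg's construction (or the limiting case of part (d) via Lemma~\ref{lemma_convex_neighborhood}): one uses the distance to the \emph{boundary}, $r = d(\cdot,\partial M)$, sets $f = -r + \frac{r^2}{4R}$, and controls $\mathrm{Hess}(r)$ from above by a matrix Riccati comparison seeded by the strict convexity of $\partial M$ (smallest principal curvature $\lambda > 0$), handling the boundary cut locus by support hypersurfaces and then smoothing \`a la Greene--Wu. Since you ultimately defer to Section~\ref{foliationcondition} rather than to your own sketch, your proof as written goes through; but if you had to supply the convex function yourself, the $d(\cdot,p)^2$ route would not.
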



Another virtue of Theorem \ref{thm:mainglobal} is that there is no restriction on the pair $(A,\Phi)$. In previous works \cite{FU,Sha,PSU12,GPSU} it was assumed that the structure group was the unitary group. Exceptions are
\cite{E,No,V} but these only deal with flat backgrounds.  The work \cite{GPSU} is the only one that deals with complicated trapped geodesics and topology, but $(M,g)$ must be assumed negatively curved.
One drawback of Theorem \ref{thm:mainglobal} is that it does {\it not} apply to two-dimensional manifolds.

Theorem \ref{thm:mainglobal} is proved by introducing a {\it pseudo-linearization} that reduces the nonlinear problem to a linear one.  This pseudo-linearization already appeared in \cite{PSU12} and it is very similar in spirit to the one used in \cite{SU98, SUV13} for the boundary rigidity problem in which the role of $C_{A,\Phi}$ is played by the scattering relation of $g$. A similar scenario arises in polarization tomography \cite{NS07} and quantum state tomography \cite{Il15} and our local results will have consequences for these two areas as well, see Section \ref{section:furtherapplications}.
 All this points to the fact that the hard result is the linear one and we now proceed to state it. We begin by discussing the local linear problems, since the global ones will follow from them together with a layer stripping argument also using the transport equation.

Let $(M,g)$ be a compact Riemannian manifold with boundary $\partial M$ and dimension $n\geq 3$.
For an open set $O\subset M$, $O\cap \p M\neq \emptyset$, an {\it O-local geodesic} is a unit speed geodesic segment in $O$ with endpoints in $\p M$; we denote the set of these by $\mathcal M_O$. Thus $\mathcal M_O$ is an open subset of the set $\mathcal M$ of all maximal geodesics on $M$. Define the local geodesic X-ray transform with matrix weight $W$ of a function $h$ as the collection $(I_Wh)(\g)$ of integrals of $h$ along $\g\in\mathcal M_O$, i.e.\ as the restriction of the weighted geodesic X-ray transform to $\mathcal M_O$. The local question we wish to consider is the following:

\medskip

\emph{Can we determine $h|_O$, the restriction of $h$ to the open set $O$, from the knowledge of $(I_Wh)(\g)$ for $\g\in\mathcal M_O$?}

\medskip

As mentioned before, in this paper  we focus on the case when $h$ is merely a function on $M$ plus a term induced by a 1-form. We embed $M$ into some neighborhood $\widetilde M$ and extend the metric $g$ smoothly onto $\widetilde M$, so $\gamma\in \mathcal M$ is extended to a smooth geodesic on $\widetilde M$. Let $\rho\in C^{\infty}(\widetilde M)$ be a defining function of $\p M$, so that $\rho>0$ in $M\backslash \p M$;  $\rho<0$ on $\widetilde M\backslash M$, and $\rho$ vanishes non-degenerately at $\p M$. 
 
We show invertibility results for the local weighted geodesic X-ray transform on neighborhoods of a strictly convex boundary point $p\in \p M$ of the form $\{\tx>-c\}$, for sufficiently small $c>0$, where $\tx$ is a function with $\tx(p)=0$, $d\tx(p)=-d\rho(p)$. The level sets of $\tx$ are concave near $p$ relative to the neighborhoods. 

We first consider the case when $h$ is just a function on the base manifold $M$. 

\begin{thm}\label{local function}
Assume $\p M$ is strictly convex at $p\in\p M$. There exists a function $\tx\in C^{\infty}(\widetilde M)$ with $O_p=\{\tx>-c\}\cap M$ for sufficiently small $c>0$, such that $f\in L^2(O_p; \mathbb C^N)$ can be stably determined by the weighted geodesic ray transform $I_W$ restricted to $O_p$-local geodesics in the following sense: for $s\geq 0$, $f\in H^s(O_p;\mathbb C^N)$, the $H^{s-1}$ norm of $f$ restricted to any compact subset of $O_p$ is controlled by the $H^s$ norm of $I_Wf|_{\mathcal M_{O_p}}$.
\end{thm}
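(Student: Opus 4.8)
The plan is to prove Theorem \ref{local function} by following the microlocal / Melrose-type approach that has become standard for local X-ray transforms (as in Uhlmann--Vasy for the scalar case and Stefanov--Uhlmann--Vasy), carefully checking that nothing in that argument is obstructed by the matrix weight $W$. First I would choose the function $\tx$: near the strictly convex boundary point $p$, pick $\tx$ with $\tx(p)=0$, $d\tx(p)=-d\rho(p)$, so that the level sets $\{\tx=-c'\}$ for small $c'\ge 0$ are strictly concave as seen from inside $O_p=\{\tx>-c\}\cap M$; this guarantees that $O_p$-local geodesics almost tangent to a level set $\{\tx = t\}$ stay in $\{\tx>t\}$ to second order, which is the geometric input that makes the scheme work. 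Then I would introduce Melrose's scattering calculus by setting $x = \tx + c$ (so the artificial boundary is $\{x=0\}$) and working on the scattering cotangent bundle; the local geodesics near the boundary are parametrized by their closest point to $\{x=0\}$ and a (rescaled) tangential direction, and the key observation is that as the artificial boundary is approached, these geodesics concentrate, so the normal operator picks up the right ellipticity.

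The core of the proof is then: (i) form the normal operator $N_W = I_W^* \chi I_W$ where $I_W^*$ is the (weighted) adjoint with respect to a suitable measure on $\mathcal M_{O_p}$ and $\chi$ is a smooth cutoff localizing to $O_p$-local geodesics, introducing at the same time an exponential conjugation $e^{-F/x}$ (Vasy's trick) with $F$ a large constant, to obtain a favorable operator $A_W = e^{-F/x} N_W e^{F/x}$; (ii) show that $A_W$ is an element of the scattering pseudodifferential calculus $\Psi_{\scl}^{-1,0}$ on $\widetilde M$ near $p$, matrix-valued now, with principal symbol computed by an explicit oscillatory-integral / stationary-phase computation over the fibre of tangential directions; (iii) check that for $F$ large enough this principal symbol is elliptic as an $N\times N$ matrix — and here the point is that the leading contribution is $W^*W$ (or rather an average of $W^*(x,v)W(x,v)$ against a positive density coming from the geometry, plus the Gaussian weight from the conjugation), which is positive definite precisely because $W$ is invertible, so invertibility of $W$ is exactly what replaces positivity in the scalar case; (iv) invoke the standard parametrix construction in the scattering calculus to conclude that $A_W$ is invertible (after possibly shrinking $c$) on the relevant scattering Sobolev spaces, and then unwind the conjugation and cutoffs to obtain the stability estimate $\|f\|_{H^{s-1}(K)} \lesssim \|I_W f|_{\mathcal M_{O_p}}\|_{H^s}$ for compact $K\subset O_p$, $s\ge 0$.

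I expect the main obstacle — and the only place where genuinely new work beyond transcribing the scalar argument is required — to be step (ii)--(iii): verifying that the matrix-valued normal operator lies in the scattering calculus with the expected principal symbol, and that the symbol is elliptic. The ellipticity computation has to be done with care because one must identify the leading term of the (matrix) symbol of $e^{-F/x} I_W^* \chi I_W e^{F/x}$ as $x\to 0$; the Gaussian localization produced by the $e^{\pm F/x}$ conjugation is what forces the integral over tangential directions to localize near the conormal direction, and one needs that the matrix $\int W^*W \,d(\text{localized measure})$ is positive definite uniformly — which follows from $W\in C^\infty(SM;GL(N,\mathbb C))$ by compactness, but the uniformity as $c\to 0$ and as the base point varies must be checked. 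A secondary technical point is handling the fact that $\chi$ also cuts off in the geodesic variables: one must ensure the cutoff can be chosen so that the error terms it introduces are smoothing, and that the support conditions are compatible with the exhaustion function so that the argument can later be iterated (the layer-stripping step, which however belongs to the global theorem, not to this local statement). Once the elliptic scattering-calculus statement is in place, producing the Sobolev estimate and transferring it to standard (rather than scattering) Sobolev norms on compact subsets of $O_p$ is routine.
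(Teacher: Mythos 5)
Your proposal follows essentially the same route as the paper's proof: the exponentially conjugated, $\chi$-localized normal operator $N_F$ of \eqref{N_F function}, its membership in $\Psi_{\scl}^{-1,0}$, ellipticity both at fiber infinity and at finite points of ${}^{\scl}T^*\Omega$ via positive definiteness of the averaged $W^*W$ (this being exactly where invertibility of $W$ enters), and then the scattering-calculus inversion and stability estimate as in \cite{UV15}. One small correction of mechanism: in the boundary-symbol ellipticity the Gaussian localization comes from choosing the cutoff $\chi$ to be (an approximation of) the Gaussian $e^{-s^2/(2F^{-1}\alpha)}$ — and ellipticity needs only $F>0$ together with $n\geq 3$ (the $\mathbb{S}^{n-3}$ integral), not $F$ large — rather than from the conjugation $e^{\pm F/x}$ itself, which only contributes the decay factor $e^{-FX}$ in the kernel.
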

 
The control is uniform on compact subsets of $O_p$ that are uniformly away from $\{\tilde x=-c\}$. There is also a reconstruction formula similar to the one in \cite[Theorem 4.15]{SUV14}. It is worth mentioning that Theorem \ref{local function} also works for a general family of curves, see the appendix of \cite{UV15}.

There is a special type of weighted geodesic ray transforms called the {\it attenuated geodesic ray transform}. 
Given a pair $(A,\Phi)$ we are interested in the case when the weight $W$ arises as a solution
of the transport equation on $SM$:
\begin{equation}\label{transport 1}
XW=W\mathcal A,\quad W|_{\p_+SM}=\mbox{\rm id},
\end{equation}
where $X$ is the generating vector field of the geodesic flow and $\mathcal A(x,v):=A_{x}(v)+\Phi(x)$.
Note that even if $\p M$ is strictly convex at $p$, in general solutions to \eqref{transport 1} are only continuous on $SO_p$ (smooth in $SO_p\backslash S(O_p\cap \p M)$), however this will not affect our arguments (cf. Section \ref{one form}).


We denote the weighted geodesic ray transform associated with a pair $(A,\Phi)$ by $I_{\mathcal A}$, so the following corollary is a special case of Theorem \ref{local function}. 

\begin{cor}
Assume $\p M$ is strictly convex at $p\in\p M$. There exists a function $\tx\in C^{\infty}(\widetilde M)$ with $O_p=\{\tx>-c\}\cap M$ for sufficiently small $c>0$, such that the restriction of $f\in L^2(O_p)$ on an arbitrary compact subset of $O_p$ can be stably determined by the local attenuated geodesic ray transform $I_{\mathcal A}f|_{\mathcal M_{O_p}}$.
\end{cor}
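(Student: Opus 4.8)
The plan is to recognize the attenuated ray transform $I_{\mathcal A}$ as a weighted geodesic X-ray transform $I_W$ with an invertible matrix weight, so that the corollary becomes an instance of Theorem \ref{local function}. Concretely, let $W\in C(SO_p;GL(N,\mathbb C))$ be the solution of the transport equation \eqref{transport 1}, $XW=W\mathcal A$ with $W|_{\p_+SM}=\mathrm{id}$. Restricting to a geodesic $\g\in\mathcal M_{O_p}$ and setting $V(t)=W(\g(t),\dot\g(t))$, equation \eqref{transport 1} reads $\dot V=V\mathcal A(\g,\dot\g)$, and since $V(0)=\mathrm{id}$ one checks that $V^{-1}$ is exactly the fundamental matrix solution used to define $C_{A,\Phi}$; in particular $V$, hence $W$, is pointwise invertible. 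A standard integrating-factor computation (differentiating $V(t)^{-1}$ against the solution of the attenuated transport equation with source $f\circ\pi$) then gives
\[
(I_{\mathcal A}f)(\g)=\int W(\g(t),\dot\g(t))\,f(\g(t))\,dt=(I_Wf)(\g),\qquad \g\in\mathcal M_{O_p},
\]
up to a harmless left multiplication by a smooth invertible matrix depending only on $\g$. Thus $I_{\mathcal A}=I_W$ for an admissible matrix weight, and with $\tx$ and $O_p$ as in Theorem \ref{local function} the claimed stable determination of $f$ on compact subsets of $O_p$ would follow at once if $W$ were smooth on $SO_p$.

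The one point requiring attention is that $W$, as noted after \eqref{transport 1}, is in general only continuous on $SO_p$, the loss of smoothness being confined to $S(O_p\cap\p M)$, i.e.\ to directions tangent to $\p M$. Away from $\p M$, however, $W$ is smooth: for $(x,v)$ with $x$ in the interior of $O_p$ the geodesic through $(x,v)$, traced backwards, meets $\p M$ transversally near $p$ (a tangent geodesic would lie outside $M$ by strict convexity), so its entry point and entry time depend smoothly on $(x,v)$, and $W(x,v)$, obtained by solving the ODE from that entry point, is a smooth function of $(x,v)$ there. Consequently, for any compact $K\subset O_p$ lying in the interior of $M$, the Schwartz kernel of the normal operator $I_W^*\chi I_W$ used in the proof of Theorem \ref{local function} (with $\chi$ the cutoff to geodesics almost tangent to the level sets of $\tx$) is, microlocally over $K$, identical in structure to the smooth-weight case and has the same principal symbol; hence the ellipticity and small-$c$ invertibility arguments apply verbatim and yield the stability estimate for $f|_K$. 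To obtain the estimate on compact subsets of $O_p$ that touch $\p M$, one invokes the analysis of Section \ref{one form}, where it is verified that the mild (essentially conormal) singularity of $W$ at glancing directions does not enter the principal symbol of $I_W^*\chi I_W$, or can be localized where the relevant cutoffs vanish.

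The main obstacle is precisely this last regularity bookkeeping near $\p M$: one must check that the cutoff $\chi$ and the artificial-boundary construction in Theorem \ref{local function} can be arranged so that the non-smoothness of $W$ at directions tangent to $\p M$ does not degrade the principal symbol of the normal operator over compact subsets of $O_p$ that are uniformly away from $\{\tx=-c\}$. Granting the verification of Section \ref{one form}, the corollary follows by applying Theorem \ref{local function} to the weight $W$.
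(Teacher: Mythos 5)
Your reduction of the corollary to Theorem \ref{local function} via the weight $W$ solving \eqref{transport 1} is indeed the intended route (the paper in fact \emph{defines} $I_{\mathcal A}$ as the weighted transform with this $W$), and your observation that $W$ is smooth away from the fibers over $O_p\cap\partial M$ is correct. The genuine gap is in how you dispose of the non-smoothness of $W$ there. You defer it to ``the analysis of Section \ref{one form}'', asserting that it is verified there that the singularity of $W$ ``does not enter the principal symbol'' of the normal operator ``or can be localized where the relevant cutoffs vanish''. No such verification exists in that section, and the localization claim points the wrong way: the cutoff $\chi(\lambda/x)$ concentrates precisely on geodesics nearly tangent to the level sets of $\tilde x$, and near $p$ (where $d\tilde x(p)=-d\rho(p)$) these are, at points of $\partial M$, directions close to $S(\partial M)$ --- exactly the set where $W$ may fail to be smooth. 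Moreover, ellipticity is not the only issue: membership of $N_F$ in the scattering calculus requires smoothness of the Schwartz kernel in $(x,y)$ down to the front face, which your argument only secures over compact subsets of the interior of $M$, whereas the stability statement concerns compact subsets of $O_p=\{\tilde x>-c\}\cap M$, which meet $\partial M$.

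The missing idea, which is how the paper handles this (stated at the beginning of Section \ref{one form}), is much simpler: extend $g$ and the pair $(A,\Phi)$ to $\widetilde M$ and let $\widetilde W$ solve \eqref{transport 1} on $S\widetilde M$; the loss of regularity is then pushed to $\partial\widetilde M$, so $\widetilde W|_{SM}$ is smooth. Since both $\widetilde W$ and $W$ satisfy $\dot V=V\mathcal A$ along any $\gamma\in\mathcal M_{O_p}$, the quotient $\widetilde W W^{-1}$ is constant along $\gamma$, equal to the invertible matrix $C(\gamma)=\widetilde W$ evaluated at the entry point of $\gamma$ into $M$; hence $I_{\widetilde W}f(\gamma)=C(\gamma)\,I_{\mathcal A}f(\gamma)$, so the two data sets determine each other (and stability transfers, $C(\gamma)^{\pm1}$ being uniformly bounded on the relevant compact set of geodesics). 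Theorem \ref{local function} then applies verbatim to the smooth invertible weight $\widetilde W$, with no modification of the microlocal argument, which is exactly the content of the corollary.
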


In order to prove Theorem \ref{thm:mainglobal} we need to consider functions $h$ which have linear dependence on the velocities, i.e. $h(x,v)=\alpha_{x}(v)+f(x)$, where $\alpha$ is a 1-form.  In this case $I_{\mathcal A}$ automatically exhibits a kernel: if $h=(d+A+\Phi)p$, where $p$ vanishes on the boundary, then
$I_{\mathcal A}(h)=0$. Hence the optimal local theorem is as follows:

\begin{thm}\label{local connection}
Assume $\p M$ is strictly convex at $p\in\p M$. There exists a function $\tx\in C^{\infty}(\widetilde M)$ with $O_p=\{\tx>-c\}\cap M$ for sufficiently small $c>0$, such that for given $h=\alpha+f\in L^2(TO_p;\mathbb C^N) \oplus L^2(O_p;\mathbb C^N)$ with $\alpha$ linear in $v$ there is $p\in H^{1}_{loc}(O_p;\mathbb C^N)$ with $p|_{O_p\cap \p M}=0$ such that $h-(d+A+\Phi)p\in L^2_{loc}(TO_p;\mathbb C^N) \oplus L^2_{loc}(O_p;\mathbb C^N)$ can be stably determined from $I_{\mathcal A}h$ restricted to $O_p$-local geodesics in the following sense: for $s\geq 0$, $h\in H^s(TO_p;\mathbb C^N)\times H^s(O_p;\mathbb C^N)$, the $H^{s-1}$ norm of $h-(d+A+\Phi)p$ restricted to any compact subset of $O_p$ is controlled by the $H^s$ norm of $I_{\mathcal A}h|_{\mathcal M_{O_p}}$.
\end{thm}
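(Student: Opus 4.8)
The plan is to run the microlocal scheme that underlies Theorem~\ref{local function} (due to Uhlmann--Vasy, with the tensorial refinements of \cite{SUV14,UV15}), now carried out for the pair $h=\alpha+f$ with the gauge built in. Work on $O_p=\{\tx>-c\}\cap M$ for small $c>0$; its boundary splits into the real part $O_p\cap\pa M$ and the artificial part $\{\tx=-c\}$, and the level sets of $\tx$ are concave relative to $O_p$. Regard $h$ as a pair $(\alpha,f)$ --- a $\mathbb C^N$-valued $1$-form plus a $\mathbb C^N$-valued function, i.e. a section of a vector bundle $E$ over $O_p$ --- and set $D=d+A+\Phi$, so that a $\mathbb C^N$-valued function $p$ on $M$ gives the section $Dp=(dp+Ap,\Phi p)$ of $E$. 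Introduce the localized normal operator $N_{\mathcal A}=I_{\mathcal A}^{*}\varphi\,I_{\mathcal A}$, where $\varphi$ is a cutoff to a compact family of $O_p$-local geodesics; as in the scalar case a Schwartz-kernel computation shows $N_{\mathcal A}$ is an $\mathrm{End}(E)$-valued pseudodifferential operator of order $-1$ on $O_p$, and since the attenuation $\mathcal A$ and the matrix weight enter only through the zeroth-order factor $W$, its principal symbol at a covector $\xi$ is, up to a positive scalar, the average over $\{v:\ |v|=1,\ v\perp\xi\}$ of the map $(\alpha,f)\mapsto(\alpha(v)+f)\,(v^{\flat},1)$. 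This matrix is positive semidefinite with one-dimensional kernel, spanned by $(\lambda\xi,0)$, $\lambda\in\mathbb C^N$.

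That kernel direction is precisely the symbol of the gauge: if $p|_{\pa M}=0$ then $I_{\mathcal A}(Dp)=0$, which one sees by integrating $W\,(Xp+\mathcal A p)$ by parts along a geodesic and invoking the transport equation $XW=W\mathcal A$ together with $W|_{\pa_+SM}=\mathrm{id}$. To remove it I would fix a solenoidal gauge adapted to $(A,\Phi)$: letting $D^{*}$ denote the formal adjoint of $D$, so that $D^{*}D=\Delta+(\text{lower order})$ is an elliptic matrix Laplacian on functions, solve the Dirichlet problem $D^{*}D\,p=D^{*}h$ with $p$ vanishing on $O_p\cap\pa M$; this is carried out on a slightly larger neighborhood $\{\tx>-c'\}$, $c'>c$, where one has a genuine boundary value problem, and is then restricted to $O_p$. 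Elliptic regularity gives $p\in H^{1}_{\loc}(O_p;\mathbb C^N)$, with no control as $\tx\to-c$, and the solenoidal part $h^{s}:=h-Dp$ satisfies $D^{*}h^{s}=0$. Since $I_{\mathcal A}h=I_{\mathcal A}h^{s}$, it now suffices to recover $h^{s}$ stably from $N_{\mathcal A}h$.

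For this I would form the gauged normal operator $\mathcal N_{\mathcal A}=N_{\mathcal A}+D\,\psi\,D^{*}$ with $\psi$ a suitable cutoff (including a smoothing factor so both summands have order $-1$); on $\mathrm{Ran}\,D$ the second term dominates and on $\ker D^{*}$ the first does, so $\mathcal N_{\mathcal A}$ is elliptic. The crux is that after conjugation by the exponential weight $e^{-\mathbf F/\tx}$ the operator $e^{-\mathbf F/\tx}\,\mathcal N_{\mathcal A}\,e^{\mathbf F/\tx}$ is elliptic in Melrose's scattering calculus on $\{\tx\geq-c\}$ (equivalently, in the semiclassical foliation calculus with $\tx$ playing the role of the semiclassical parameter), uniformly for $\mathbf F$ large and $c$ small; this is exactly the analysis of \cite{SUV14,UV15}, the only new points being that the extra matrix terms from $A$ and $\Phi$ are of lower order and hence do not affect the scattering principal symbol, and that the non-smoothness of $W$ at $S(O_p\cap\pa M)$ is harmless --- the latter because $W$ is smooth off the glancing set while the singularities of the Schwartz kernel of $N_{\mathcal A}$ are carried on the diagonal and its conormal directions, so glancing contributes only to lower-order, relatively compact error terms (alternatively, smooth $W$ and estimate the difference). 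Uniform scattering ellipticity then yields a parametrix, hence invertibility of $\mathcal N_{\mathcal A}$ between the relevant exponentially weighted Sobolev spaces for $c$ small; combined with $D^{*}h^{s}=0$ this recovers $h^{s}=h-Dp$ from $N_{\mathcal A}h$, hence from $I_{\mathcal A}h$, the exponential weight localizing the estimate to compact subsets of $O_p$ that are uniformly away from $\{\tx=-c\}$, and tracking orders produces the stated $H^{s-1}$-from-$H^{s}$ loss.

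The step I expect to be the main obstacle is this uniform scattering ellipticity: matching the one-dimensional kernel of the principal symbol of $N_{\mathcal A}$ with the gauge term so that $\mathcal N_{\mathcal A}$ is genuinely elliptic, and then propagating that through the exponential conjugation to obtain invertibility uniformly as $c\to0$ --- the hard analytic core, inherited with only minor modifications from \cite{SUV14,UV15}. A secondary, more bookkeeping difficulty is making the local solenoidal decomposition precise: the correct treatment of the artificial boundary, the resulting $H^{1}_{\loc}$ regularity of $p$ and the compatibility of $h^{s}$ with the weighted spaces, together with confirming once and for all that the mild non-smoothness of the attenuated weight near $S(\pa M)$ never enters the principal part of the argument.
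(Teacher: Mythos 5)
Your overall architecture --- conjugating by an exponential weight, fitting the localized normal operator into Melrose's scattering calculus, proving ellipticity modulo a one-dimensional gauge direction, adding a gauge-correction term $D\psi D^{*}$, and then splitting $h$ by a solenoidal decomposition --- is indeed the paper's strategy, but the specific gauge you chose breaks the argument at its analytic core. You take the solenoidal decomposition with respect to the \emph{unweighted} adjoint (solving $D^{*}Dp=D^{*}h$) and then conjugate the gauged operator by $e^{\mp F/\tx}$ on both sides. Under that conjugation both $D$ and $D^{*}$ acquire the boundary symbol $\xi+iF$ in the normal slot: zeroth-order terms such as $F$ \emph{do} enter the scattering (finite-point) symbol --- that is the whole point of the conjugation --- so the conjugated correction term has symbol $B_{+}\,\sigma(\psi)\,B_{-}^{*}$ with $B_{\pm}$ the column symbols carrying $\xi\pm iF$. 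This is not of the form $BPB^{*}$, it is not positive semidefinite, and its kernel does not complement the kernel of the boundary symbol of the conjugated normal operator. Concretely, at a boundary point with $\xi=0$, $|\eta|=F$, the vector $(v^{0},v',f)=(v^{0},-\tfrac{i}{F}v^{0}\eta,0)$ lies in the kernel of the boundary symbol of $N_F$ (by the computation in Proposition \ref{finite points} that kernel is $\{f=0,\ v'_j=\beta v^{0}_j\eta\}$ with $\beta=(\xi-iF)/(\xi^{2}+F^{2})$) and is simultaneously annihilated by the conjugated unweighted divergence, since $(\xi+iF)+(\xi-iF)|\eta|^{2}/(\xi^{2}+F^{2})=iF-iF=0$ there. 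Hence your gauged, conjugated operator is \emph{not} elliptic at finite points of ${}^{sc}T^{*}\Omega$, and the parametrix/invertibility step, which is exactly where the theorem is won or lost, collapses.

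The fix is the paper's Witten-type solenoidal gauge: conjugate $d_{\mathcal A}$ and $\delta_{\mathcal A}$ with \emph{opposite} weights, $d_{\mathcal A,F}=e^{-F/x}Q^{-1}d_{\mathcal A}e^{F/x}$ and $\delta_{\mathcal A,F}=e^{F/x}\delta_{\mathcal A}Q^{-1}e^{-F/x}$, so that $\delta_{\mathcal A,F}$ is the genuine adjoint of $d_{\mathcal A,F}$ with boundary symbol $(\xi-iF,\iota_{\eta},0)$ (Lemma \ref{gauge symbol}); impose the solenoidal condition $\delta_{\mathcal A,F}h_F=0$, which in the original variables is the $e^{-2F/x}$-weighted condition \eqref{gauge}, prove ellipticity of $N_F$ precisely on $\ker\sigma(\delta_{\mathcal A,F})$ at fiber infinity and at finite points (Propositions \ref{fiber infinity}, \ref{finite points}), add $d_{\mathcal A,F}P\delta_{\mathcal A,F}$ (Proposition \ref{one form elliptic}), and produce the gauge representative $p$ by inverting the solenoidal Witten Laplacian $\Delta_{\mathcal A,F}=\delta_{\mathcal A,F}d_{\mathcal A,F}$ with Dirichlet conditions as in \cite[Section 4]{SUV14} --- not your unweighted Dirichlet problem on a slightly larger neighborhood, whose boundary condition on the artificial boundary you also leave unspecified. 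A secondary omission in the same spirit: the conjugation matrix $Q=\mathrm{diag}(\mathrm{Id},x^{-1}\mathrm{Id})$ is what makes $d_{\mathcal A,F}$ a scattering differential operator of order $(1,0)$ and pushes $A$ and $\Phi$ out of the boundary symbol, so the remark that ``the attenuation is lower order'' needs this rescaling to be true at the front face. Your handling of the non-smoothness of $W$ near $S(\p M)$ (extend $(A,\Phi)$ and the metric to $\widetilde M$ and replace $W$ by the smooth solution there) is fine and is exactly what the paper does.
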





As discussed in \cite{UV15, SUV14}, the local uniqueness results can be iterated to obtain global results provided that $(M,g)$ can be foliated by strictly convex hypersurfaces in a suitable way. One contribution of the present paper is a more detailed discussion on conditions that allow this layer stripping argument to work. In particular we observe that the foliation does not need to be adapted to the boundary. This is made precise in the following definitions.

\begin{Def*}
Let $(M,g)$ be a compact manifold with strictly convex boundary.
\begin{enumerate}
\item[(a)] 
$M$ satisfies the \emph{foliation condition} if there is a smooth strictly convex function $f: M \to \mR$.
\item[(b)] 
A connected open subset $U$ of $M$ satisfies the \emph{foliation condition} if there is a smooth strictly convex exhaustion function $f: U \to \mR$, in the sense that the set $\{ x \in U \,;\, f(x) \geq c \}$ is compact for any $c > \inf_U f$.
\end{enumerate}
\end{Def*}

Clearly (a) is a special case of (b). If (b) is satisfied, then $U \cap \partial M \neq \emptyset$, the level sets of $f$ provide a foliation of $U$ by smooth strictly convex hypersurfaces (except possibly at the minimum point of $f$ if $U=M$), and the fact that $f$ is an exhaustion function ensures that the layer stripping can be continued to all of $U$. In Section \ref{foliationcondition} we provide a number of sufficient conditions for (a) or (b) to hold.

We now state the main linear result that will imply Theorem \ref{thm:mainglobal}.

\begin{thm}
Let $(M,g)$ be a compact manifold with strictly convex boundary and $\dim(M) \geq 3$, and let $U$ be a connected open subset of $M$ that satisfies the foliation condition. Let $(A,\Phi)$ be a pair in $U$ and let $h(x,v)=f(x)+\alpha_{x}(v)$ where
$f\in C^{\infty}(U,\mathbb C^N)$ and $\alpha$ is a smooth $\mathbb C^N$-valued 1-form in $U$. If 
\[
(I_{\mathcal A} h)(\gamma) = 0 \ \ \text{for any geodesic $\gamma$ in $U$ with endpoints on $\partial M$},
\]
then 
\[
f = \Phi p \quad \text{and} \quad \alpha = dp + Ap \quad \text{in $U$}
\]
for some $p \in C^{\infty}(U, \mathbb C^N)$ with $p|_{\partial M} = 0$. In particular, if $(M,g)$ admits a smooth strictly convex function, then this result holds with $U = M$.
\label{thm:maingloballinear}
\end{thm}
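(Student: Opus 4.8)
The plan is to deduce the global result from the local Theorem~\ref{local connection} via a layer stripping argument driven by the strictly convex exhaustion function $f$ on $U$. First I would set up the sublevel/superlevel sets $U_c = \{x \in U \,;\, f(x) > c\}$ for $c \geq \inf_U f$, and let $S$ be the set of those $c$ for which there exists $p_c \in C^\infty(U_c;\mathbb C^N)$ vanishing on $\partial M \cap \overline{U_c}$ with $f = \Phi p_c$ and $\alpha = dp_c + A p_c$ on $U_c$. The goal is to show $\inf_U f \in \overline{S}$ (suitably interpreted as $U_c \to U$), which by a gluing/uniqueness argument yields a single $p$ defined on all of $U$. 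The standard three ingredients are: (i) for $c$ close to $\sup_U f$ the boundary point(s) where $f$ is near maximal are strictly convex, so Theorem~\ref{local connection} applies to start the process; (ii) an open step — if $c_0 \in S$ then $c_0 - \epsilon \in S$ for small $\epsilon > 0$, again by applying the local theorem near each newly exposed level set $\{f = c_0\}$, which is a strictly convex hypersurface (the sign conventions ensure the level sets are concave from the side $\{f > c_0\}$, exactly the configuration Theorem~\ref{local connection} handles); and (iii) a closed/limiting step handling the exhaustion, using that $\{f \geq c\}$ is compact so only finitely many local patches are needed down to any level, and consistency of the locally constructed $p$'s on overlaps.

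The key technical point making the layer stripping work is consistency of the potentials across steps. When I apply the local theorem on a new patch near $\{f = c_0 - \epsilon\}$ I obtain some $\tilde p$ with $f = \Phi \tilde p$, $\alpha = d\tilde p + A\tilde p$ there; on the overlap with $U_{c_0}$ both $\tilde p$ and $p_{c_0}$ satisfy $d(\tilde p - p_{c_0}) + A(\tilde p - p_{c_0}) = 0$ and $\Phi(\tilde p - p_{c_0}) = 0$. The first equation says $\tilde p - p_{c_0}$ is parallel for the connection $d + A$ along any curve in the overlap; since the relevant regions are connected and meet $\partial M$ where both vanish, one concludes $\tilde p = p_{c_0}$ on the overlap, so they glue to a function on $U_{c_0 - \epsilon}$. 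Here I would use that $U_{c}$ stays connected (or argue component by component, noting each component meets $\partial M$), and the nontrapping/contractibility consequences of the existence of a strictly convex exhaustion function. Regularity ($C^\infty$ rather than merely $H^1_{\loc}$) is upgraded at the end from the two relations $f = \Phi p$, $\alpha = dp + Ap$ with $f, \alpha, A, \Phi$ smooth: the second gives $dp = \alpha - Ap$, so bootstrapping from $p \in H^1_{\loc}$ improves $p$ by one derivative at each step.

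The main obstacle I anticipate is the passage from a monotone family of locally defined solutions to a solution on all of $U$ when $U$ is noncompact and $f$ is only an exhaustion — i.e.\ making ingredient (iii) rigorous, including the case $U = M$ where the minimum point of $f$ must be treated (there the level sets degenerate and one must check that the solution extends smoothly across the critical point, using that near a nondegenerate minimum the geodesics through a full neighborhood still suffice, or alternatively that $p$ is already determined on a punctured neighborhood and extends by continuity and the transport equations). A secondary subtlety is uniform control: Theorem~\ref{local connection} gives estimates only on compact subsets uniformly away from the ``bad'' level $\{\tilde x = -c\}$, so in the open step one must choose the patches and the amount $\epsilon$ of progress carefully so that the region on which $p$ is newly determined is not squeezed to nothing — this is exactly the point where strict convexity of the level sets (quantified uniformly on compact subsets of $U$ via the strict convexity of $f$) is used to guarantee a definite amount of progress at each stage. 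Finally, for the last sentence of the theorem, when $M$ itself admits a strictly convex function one takes $U = M$, and the exhaustion hypothesis is automatic since $M$ is compact.
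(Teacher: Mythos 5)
Your overall scheme (layer stripping along level sets of the exhaustion function, started at the strictly convex boundary, with ODE uniqueness along geodesics reaching $\partial M$ used for consistency) is the same as the paper's, but there is a genuine gap at the heart of the open step. At an interior point $p\in f^{-1}(c_0)\cap M^{\mathrm{int}}$ you propose to ``apply the local theorem near the newly exposed level set,'' treating $f^{-1}(c_0)$ as the strictly convex boundary. But the hypothesis of Theorem~\ref{local connection} is the vanishing of the attenuated transform over the \emph{short geodesics localized near $p$ with endpoints on the artificial boundary} $f^{-1}(c_0)$, and this is not part of your data: the assumption only concerns geodesics with endpoints on $\partial M$. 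The paper bridges this by a subtraction trick: the already-constructed potential $\tilde v$ on $U_{\geq c_0}$ is extended smoothly (arbitrarily) to $M$, one notes $I_{\mathcal A}\bigl((X+A+\Phi)\tilde v\bigr)(\gamma)=0$ for every boundary-to-boundary geodesic, and since $h-(X+A+\Phi)\tilde v$ vanishes identically on $U_{\geq c_0}$, Lemma~\ref{lemma_exhaustion_boundary_geodesic} (each short near-tangential geodesic in $U_{\leq c_0}$ extends through $U_{\geq c_0}$ to $\partial M$ on both ends) localizes the full-geodesic integral to the short piece, producing exactly the vanishing of the local transform of $h-(X+A+\Phi)\tilde v$ in $U_{\leq c_0}$. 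Without this step the local theorem simply cannot be invoked at interior level points. Relatedly, your consistency argument is misaligned with the geometry: the new potential $v_p$ produced there lives in $U_{\leq c_0}$ and vanishes on $f^{-1}(c_0)$, not on $\partial M$, so there is no overlap inside $U_{>c_0}$ on which your ``both vanish on $\partial M$'' parallel-transport argument applies; instead one must glue $\tilde v$ and $\tilde v+v_p$ \emph{across} the interface $f^{-1}(c_0)$ and verify smoothness there by matching all normal derivatives inductively from the transport ODE in semigeodesic coordinates, as the paper does.

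Two further points are weaker than what is needed, though less central. For the limiting step your appeal to ``finitely many patches'' and ``uniform progress from uniform convexity'' is not an argument; the paper instead proves the closed step directly: if the potential $w$ exists on $U_{>c}$, it is given by an explicit integral of $Wh$ along geodesics with $df(\dot\gamma)\geq 0$ (with $W$ the attenuation solving $XW-W(A+\Phi)=0$), and evaluating along a smooth vector field $Y$ with $df(Y)\geq 0$, nontangential at $\partial M$, yields a smooth extension to $U_{\geq c}$. Finally, for the case $U=M$, the passage across the minimum point $z_0$ is handled in the paper not by a local continuity heuristic but by the regularity result for the transport equation on nontrapping manifolds with strictly convex boundary (\cite[Proposition 5.2]{PSU12}): one gets $u\in C^{\infty}(SM)$ with $(X+A+\Phi)u=-h$, $u|_{\partial SM}=0$, shows $u=-p$ on $S(M\setminus\{z_0\})$ by ODE uniqueness, and concludes $u$ is a smooth function of $x$ on all of $M$; your suggested bounded-plus-bootstrap alternative is plausible but is not carried out in your sketch.
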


The proofs of the local theorems use the groundbreaking ideas in \cite{UV15} and further exploited
in \cite{SUV13,SUV14}. The approach to the problem is microlocal and we will set things up so that a suitably localized version of $I_{W}^*I_{W}$ fits into Melrose's scattering calculus \cite{Mel}, after conjugation by an exponential weight.  As in the previous references, a key ingredient is the introduction of an artificial boundary ($\tilde{x}=-c$) which is a little bit less convex than the actual boundary. To obtain the Fredholm property in this calculus, one needs to prove that the boundary symbol is elliptic and this is what we do for the case of invertible matrix weights.
In the case of Theorem \ref{local connection}, ellipticity is achieved in a particular gauge and some care is needed to deal with the pair $(A,\Phi)$, particularly when defining the appropriate localized version of $I_{\mathcal A}^*I_{\mathcal A}$. This is perhaps the most technically challenging aspect of the paper.
The proof of the global theorems combine in a novel way the existence of a strictly convex exhaustion function and a regularity result for the transport equation on nontrapping manifolds with strictly convex boundary.

There is a large literature on geometric inverse problems related to geodesic X-ray transforms, and we mention here further relevant references (mostly for the scalar unweighted case). As discussed above, the present paper follows the microlocal approach leading to local results in dimensions $\geq 3$ initiated in \cite{UV15} and developed in \cite{SUV13,SUV14}. Many earlier results were based on energy estimates pioneered in \cite{Mu} and expounded in \cite{Sh94}; see \cite{PSU1, PSU_hd} and the survey \cite{PSU4} for up-to-date accounts of this method which yields global results on simple manifolds in dimensions $\geq 2$. The recent work \cite{G} extended these results to negatively curved manifolds with nontrivial trapping behavior, using methods from the microlocal analysis of flows. Another method, based on analytic microlocal analysis, has been developed in \cite{SU2, SU3, SU4} and also includes local results on real-analytic simple manifolds \cite{K, KS}.

This paper is organized as follows. Section \ref{foliationcondition} discusses in detail the existence of strictly convex functions; we felt that a thorough exposition was needed to be able to appreciate the global consequences of the local results.
Section \ref{sec_geometric} contains preliminaries necessary for the two subsequent sections. Section \ref{function} and \ref{one form} contain the technical core of the paper and prove Theorems \ref{local function} and \ref{local connection}.
Section \ref{sec_proof_global} contains the proof of Theorem \ref{thm:maingloballinear} stating global injectivity of the attenuated geodesic ray transform on functions plus one-forms. Section \ref{nonlinear connection} discusses the pseudo-linearization and proves also 
the local version of Theorem \ref{thm:mainglobal}. Finally, Section \ref{section:furtherapplications} discusses further applications to quantum state tomography and polarization tomography.

\bigskip

\noindent {\bf Acknowledgements.} 
GPP and HZ were supported by EPSRC grant EP/M023842/1. MS was supported by the Academy of Finland (Finnish Centre of Excellence in Inverse Modelling and Imaging, grant numbers 284715 and 309963) and by the European Research Council under FP7/2007-2013 (ERC StG 307023) and Horizon 2020 (ERC CoG 770924). GU was partly supported by NSF.


\section{Strictly convex functions}
\label{foliationcondition}

In this section we will collect some facts related to strictly convex functions on manifolds. Most of these facts may be found in the literature and many of them are contained in \cite{GulliverLasieckaLittmanTriggiani2004} or \cite{Udriste}, but we will supply some further details. The first main result considers the existence of global smooth strictly convex functions. All manifolds are assumed to be connected and oriented with smooth ($=C^{\infty}$) boundary.

\begin{lemma} \label{lemma_convex_anydimension}
Let $(M,g)$ be a compact manifold with strictly convex boundary and $K$ be the sectional curvature. There is a smooth strictly convex function on $M$ if any one of the following conditions holds:
\begin{enumerate}
\item[(a)]
$M$ is simply connected with $K \leq 0$.
\item[(b)] 
$M$ is simply connected with no focal points.
\item[(c)] 
$K \geq 0$.
\item[(d)] 
$K \geq -\kappa$ where $\kappa > 0$ and $\lambda > \sqrt{\kappa} \,\mathrm{tanh}(\sqrt{\kappa} R)$, where $\lambda$ is the smallest principal curvature of $\partial M$ and $R = \max_{x \in M} d(x,\partial M)$.
\end{enumerate}
Moreover, if $M$ admits a smooth strictly convex function $f$, then 
\begin{itemize}
\item 
$M$ is nontrapping and contractible;
\item 
$M$ contains no closed minimal submanifolds, or more generally there is no smooth harmonic map from a closed manifold into $M$;
\item 
$f$ has a unique local minimum point $x_0$ in $M$ and $f$ attains its global minimum there; and 
\item 
the set of critical points of $f$ is either $\{ x_0 \}$ or the empty set.
\end{itemize}
\end{lemma}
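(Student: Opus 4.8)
The plan is to prove the existence statements (a)--(d) by exhibiting explicit convex functions, and then to deduce the four ``moreover'' assertions from convexity along geodesics together with the maximum principle. For (a) and (b) I would take $f=\tfrac12\,d(p,\cdot)^2$ for an arbitrary interior point $p$. Simple connectedness together with the absence of focal points (which also holds in (a), as $K\le0$) excludes conjugate points, and since strict convexity of $\partial M$ makes $M$ geodesically convex, the minimizing geodesic from $p$ to any point is unique, so $f$ is smooth on $M$. Splitting $\operatorname{Hess}f$ into the radial part $dr\otimes dr$ ($r=d(p,\cdot)$) and the part tangent to the geodesic spheres, the latter equals $r$ times the second fundamental form of the sphere, which along a normal Jacobi field $J$ with $J(0)=0$ is controlled by $\tfrac{d}{dr}\|J\|^2=2\langle J',J\rangle>0$ --- precisely the no--focal--point condition --- so $\operatorname{Hess}f\succ 0$ on $M$; under (a) one may instead quote the Hessian comparison theorem to obtain $\operatorname{Hess}f\succeq g$. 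As (a) is a special case of (b), both follow.

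For (c) and (d) I would take $f=\phi\circ\rho$ with $\rho=d(\cdot,\partial M)$, $R=\max_M\rho$, and $\phi$ a fixed smooth, decreasing, strictly convex function such as $\phi(s)=e^{-s}$. On $M\setminus\mathrm{Cut}(\partial M)$, where $\rho$ is smooth, the level sets of $\rho$ are the inward parallels of $\partial M$ and their shape operators $S_t=\operatorname{Hess}\rho|_{T\{\rho=t\}}$ satisfy the Riccati equation $\dot S_t=-S_t^2-R_t$, with $S_0\preceq-\lambda\,\Id$ (strict convexity of $\partial M$) and $R_t\succeq-\kappa\,\Id$. Tracking the largest eigenvalue $\mu(t)$ of $S_t$, which obeys $\dot\mu\le-\mu^2+\kappa$, $\mu(0)\le-\lambda$, gives $\mu(t)\le u(t)$ with $u$ the solution of $\dot u=-u^2+\kappa$, $u(0)=-\lambda$; for $\lambda<\sqrt\kappa$ one has $u(t)=\sqrt\kappa\tanh\!\big(\sqrt\kappa\,t-\mathrm{arctanh}(\lambda/\sqrt\kappa)\big)$, and the hypothesis $\lambda>\sqrt\kappa\tanh(\sqrt\kappa R)$ is precisely the statement that $u<0$, hence $S_t\prec 0$, on $[0,R]$ (for $\lambda\ge\sqrt\kappa$ this is immediate). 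Therefore $\operatorname{Hess}f=\phi''(\rho)\,d\rho\otimes d\rho+\phi'(\rho)S_t\succ 0$ off the cut locus, with a positive lower bound on compact subsets, and (c) follows by letting $\kappa\downarrow0$, for which (d)'s hypothesis holds once $\kappa$ is small since $\lambda>0$ is fixed. Since $f$ is also continuous on $M$ and smooth and strictly convex off the cut locus, a smoothing argument of Greene--Wu type (cf.\ \cite{GW}) would then replace it by a genuinely smooth strictly convex function on $M$. I expect this last step --- reconciling the loss of smoothness of $\rho$ along $\mathrm{Cut}(\partial M)$ with strict convexity --- to be the main obstacle; the remaining ingredients are standard Riccati and Jacobi field comparison.

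For the ``moreover'' statements, let $f$ be any smooth strictly convex function on $M$; by compactness $\operatorname{Hess}f\succeq\epsilon g$ for some $\epsilon>0$. If a unit--speed geodesic $\gamma$ remained in $M$ for all $t\ge0$, then $(f\circ\gamma)''=\operatorname{Hess}f(\dot\gamma,\dot\gamma)\ge\epsilon$ would force $f\circ\gamma\to+\infty$, contradicting boundedness of $f$ on $M$; the same in backward time, so $M$ is nontrapping, and hence contractible. If $u:N\to M$ is harmonic with $N$ closed --- in particular the inclusion of a positive--dimensional closed minimal submanifold --- then $\Delta_N(f\circ u)=\operatorname{Hess}f(du,du)\ge\epsilon|du|^2\ge0$, so $f\circ u$ is subharmonic, hence constant, forcing $du\equiv0$; thus no such nonconstant map, and no such submanifold, can exist. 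If $f$ had two distinct local minima $x_0,x_1$ in $M$, then along the $M$--geodesic joining them the derivative of $f$ would be $\ge0$ at $x_0$, $\le0$ at $x_1$, yet strictly increasing --- a contradiction --- so $f$ has a unique local minimum point $x_0$, at which it attains its global minimum. Finally every critical point of $f$ has positive definite Hessian, so is a strict local minimum and hence equals $x_0$; thus the set of critical points is contained in $\{x_0\}$, i.e.\ it is $\{x_0\}$ or $\emptyset$.
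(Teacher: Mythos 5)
Your treatment of (a)--(b) and of the four ``moreover'' assertions follows the paper's own route (distance squared to an interior point, Hessian via Jacobi fields, the geodesic joining two putative local minima, subharmonicity of $f\circ u$ for harmonic $u$), with two small elisions that are fixable by citation: in (b), smoothness of $d(p,\cdot)^2$ needs uniqueness of minimizing geodesics \emph{and} the absence of conjugate points up to every point, which is exactly the statement that $\exp_p$ is a diffeomorphism onto $M$; ``geodesic convexity'' alone does not give this --- the paper gets it from the Cartan--Hadamard-type covering argument. Likewise ``nontrapping hence contractible'' is Thorbergsson's theorem and should be quoted, not asserted. Your reduction of (c) to (d) by letting $\kappa \downarrow 0$ is correct and is the same remark as in the paper.

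The genuine gap is in (c)--(d), at the step you yourself flag as ``the main obstacle'': you establish strict convexity of $f=\phi\circ\rho$, $\rho=d(\cdot,\partial M)$, only on $M$ minus the boundary cut locus $\omega_{\partial M}$, and then hope that continuity plus ``a Greene--Wu type smoothing'' finishes. It does not, as stated: a function that is continuous on $M$ and smooth and strictly convex off a closed null set need not even be convex across that set (on an interval, $-\lvert x\rvert + x^2$ is smooth and strictly convex off $\{0\}$ but has a concave kink there), and the Greene--Wu smoothing requires strict convexity in the support-function (barrier) sense on an open set --- indeed on a slightly enlarged manifold, so that the smoothing is valid up to $\partial M$. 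The missing content is precisely the proof that $f$ is $\varepsilon$-convex in the barrier sense at every point of $\omega_{\partial M}$. The paper supplies this via Eschenburg's support-hypersurface construction: at a cut point $x=\gamma_{z,n}(\tau_{\partial M}(z))$, for any $\mu<\lambda$ one takes a small hypersurface $\bar S$ through $z$ with unit normal $n(z)$ and all principal curvatures equal to $\mu$; by \cite[Lemma 3.1]{Esch86} its first focal point along $\gamma_{z,n}$ occurs strictly after that of $\partial M$, so $\bar r=d(\cdot,\bar S)$ is smooth near $x$ with $r\leq\bar r$ and $r(x)=\bar r(x)$, whence $\phi(\bar r)$ (with your decreasing $\phi$, or the paper's $h(t)=-t+t^2/4R$) is a smooth \emph{lower} support function for $f$ at $x$; a second Riccati comparison, with $\mu$ in place of $\lambda$ and $\kappa'>\kappa$ for an extension of $M$, gives $\mathrm{Hess}(\phi(\bar r))_x(v,v)\geq\eta\lvert v\rvert^2$ with $\eta$ arbitrarily close to the interior constant. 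Only after this, and after extending $f$ to a slightly larger manifold, can Greene--Wu be applied to produce the smooth strictly convex function. Without this cut-locus argument, parts (c) and (d) --- and with them the corollary for non-negatively curved manifolds --- remain unproved.
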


Next we note that in two dimensions there is a complete characterization for the existence of a strictly convex functions, based on using the mean curvature flow \cite{BeteluGulliverLittman2002}. Unfortunately this does not lead to results for the X-ray transform, since the method in this paper fails in two dimensions. The result is also special for two dimensions, since \cite{BangertRoettgen2012} constructs a compact four-dimensional manifold with strictly convex boundary satisfying (b) but not (a).

\begin{lemma} \label{lemma_convex_twodimensions}
Let $(M,g)$ be a two-dimensional compact manifold with strictly convex boundary. The following are equivalent:
\begin{enumerate}
\item[(a)] 
$M$ is nontrapping.
\item[(b)] 
$M$ has no closed geodesic in the interior.
\item[(c)] 
$M$ admits a smooth strictly convex function.
\end{enumerate}
\end{lemma}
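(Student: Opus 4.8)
The plan is to prove the three equivalences by establishing the cycle of implications $(c)\Rightarrow(a)\Rightarrow(b)\Rightarrow(c)$, where the first two are essentially free and the third carries all the content and is the only place the restriction $\dim M=2$ enters.

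For $(c)\Rightarrow(a)$ I would simply invoke the ``moreover'' part of Lemma~\ref{lemma_convex_anydimension}: any compact manifold (of any dimension) carrying a smooth strictly convex function is nontrapping. For $(a)\Rightarrow(b)$, observe that a closed geodesic $\gamma$ contained in $M\setminus\partial M$ is trapped: for any $(x,v)$ lying on $\gamma$, the maximal geodesic through $(x,v)$ is $\gamma$ itself and never reaches $\partial M$, so $\tau(x,v)=+\infty$, contradicting $(a)$. This too is dimension-free.

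The substance is $(b)\Rightarrow(c)$, which I would prove using the curve shortening flow (the mean curvature flow of curves), following \cite{BeteluGulliverLittman2002}. Run the flow $\{\gamma_t\}_{t\ge 0}$ with initial curve $\gamma_0=\partial M$. Since $\partial M$ is strictly convex its curvature vector points into $M$, so for small $t>0$ the curve is pushed off the boundary, and by the avoidance principle (with $\partial M$ as a barrier) $\gamma_t\subset M\setminus\partial M$ for all $t>0$, the curves staying embedded and nested. On a compact surface, Grayson's theorem gives a dichotomy: either the flow becomes extinct in finite time $T$, with $\gamma_t$ collapsing to a single point $x_0\in M\setminus\partial M$ (asymptotically round near extinction), or it exists for all time and subconverges to a closed geodesic of $(M,g)$. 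Hypothesis $(b)$ rules out the second case, so the curves $\gamma_t$, $0\le t<T$, foliate $M\setminus\{x_0\}$. Letting $f(x)$ be the value of $t$ with $x\in\gamma_t$ produces a function that is smooth on $M\setminus\{x_0\}$ and whose superlevel sets $\{f>t\}$ are the geodesically convex regions enclosed by $\gamma_t$. To turn this foliation into an honest strictly convex function I would reverse the time parameter (so that the unique critical point $x_0$ becomes a nondegenerate minimum) and compose with a sufficiently convex increasing function $\phi$ of one variable: along the leaves the Hessian already reproduces their positive geodesic curvature, and a large $\phi''/\phi'$ absorbs the transverse and mixed terms on any compact region where the gradient is nonzero, while near $x_0$ the asymptotically round model handles convexity directly. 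Closing the cycle then proves the lemma.

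The main obstacle is exactly the curve shortening flow analysis in $(b)\Rightarrow(c)$: long-time existence and regularity of the flow issuing from $\partial M$, the sharp dichotomy ``finite-time collapse to a point versus convergence to a closed geodesic'' (which is what makes $(b)$ precisely the right hypothesis), and enough control on the geodesic curvature of the evolving curves — which on a general surface is perturbed by the ambient Gauss curvature of $(M,g)$ — to guarantee that the leaves of the resulting foliation are strictly convex, so that the passage from foliation to strictly convex function is legitimate. These are the points established in \cite{BeteluGulliverLittman2002}, which I would quote rather than reprove the flow theory here; note also that the whole argument genuinely fails in higher dimensions, consistent with the Bangert--R\"ottgen example mentioned before the statement.
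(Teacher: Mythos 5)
Your proposal is correct and follows essentially the same route as the paper: the same cycle $(c)\Rightarrow(a)$ (via Lemma \ref{lemma_convex_anydimension}), $(a)\Rightarrow(b)$ (a closed interior geodesic is trapped), and $(b)\Rightarrow(c)$ by citing the mean curvature/curve shortening flow construction of \cite{BeteluGulliverLittman2002}. The extra detail you supply on the flow argument is consistent with the cited reference, which the paper likewise invokes without reproving.
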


We remark that there are results for curvature flows acting on strictly convex hypersurfaces in higher dimensional Riemannian manifolds \cite{Huisken1986}, \cite{Andrews1994}, but these seem to require stronger conditions than those in Lemma \ref{lemma_convex_anydimension}.

We also consider the case where a neighborhood of $M$ admits a suitable strictly convex function. If the boundary is strictly convex, the size of such a neighborhood may be estimated using curvature bounds as in \cite{BuragoZallgaller1988}, \cite{GulliverLasieckaLittmanTriggiani2004}.

\begin{lemma} \label{lemma_convex_neighborhood}
Let $(M,g)$ be a compact manifold with strictly convex boundary. Let $K \geq -\kappa$ where $\kappa > 0$, let $\lambda$ be the smallest principal curvature on $\partial M$, and let $R = \max_{x \in M} \,d(x,\partial M)$.
\begin{enumerate}
\item[(a)]
If $\lambda > \sqrt{\kappa} \,\mathrm{tanh}(\sqrt{\kappa} R)$, there is a smooth strictly convex function in $M$.
\item[(b)]
If $\lambda < \sqrt{\kappa} \,\mathrm{tanh}(\sqrt{\kappa} R)$, there is a smooth strictly convex function $f$ in $M_t$, where 
\[
M_t = \{ x \in M \,;\, d(x,\partial M) < t \}
\]
provided that 
\[
t < \frac{1}{\sqrt{\kappa}} \mathrm{artanh}(\frac{\lambda}{\sqrt{\kappa}}).
\]
The set $\{ x \in M_t \,;\, f(x) \geq c \}$ is compact for any $c > \inf_{x \in M_t} f(x)$.
\end{enumerate}
\end{lemma}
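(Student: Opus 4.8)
The strictly convex function will be of the form $f=\psi\circ\rho$, where $\rho(x)=d(x,\partial M)$ is the distance to the boundary and $\psi$ is any fixed smooth, strictly decreasing, strictly convex function on $\mathbb R$ (for instance $\psi(s)=e^{-s}$); the precise choice of $\psi$ will play no role. The plan is to (i) record a convexity estimate for the level sets $\Sigma_r=\{\rho=r\}$ via a Riccati comparison, (ii) deduce from it that $\mathrm{Hess}(f)>0$, (iii) read off the exhaustion property in case (b), and (iv) dispose of case (a). Throughout, set $r^{\ast}=\frac{1}{\sqrt\kappa}\,\mathrm{artanh}(\lambda/\sqrt\kappa)$ when $\lambda<\sqrt\kappa$ and $r^{\ast}=+\infty$ otherwise. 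Then the hypothesis of (a) is exactly $R<r^{\ast}$, while that of (b) forces $r^{\ast}<R$ and the stated threshold is $t<r^{\ast}$; thus in case (a) the distance function should behave well on all of $M$, and in case (b) on the collar $M_t$.

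Wherever $\rho$ is smooth we have $|\nabla\rho|\equiv1$, so $\mathrm{Hess}(\rho)$ annihilates $\nabla\rho$; along a unit-speed geodesic issuing normally from $\partial M$, the operator $B(r):=\mathrm{Hess}(\rho)\big|_{T\Sigma_r}$ satisfies the matrix Riccati equation $B'+B^2+R_{\nabla\rho}=0$, where $R_{\nabla\rho}=R(\cdot,\nabla\rho)\nabla\rho$ is the directional curvature operator, with $B(0)\le-\lambda\,g$ (strict convexity of $\partial M$) and $R_{\nabla\rho}\ge-\kappa\,g$ on $T\Sigma_r$ (from $K\ge-\kappa$). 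The Riccati comparison principle then gives $B(r)\le\bar\beta(r)\,g$, where $\bar\beta$ solves $\bar\beta'=\kappa-\bar\beta^2$, $\bar\beta(0)=-\lambda$, i.e.
\[
\bar\beta(r)=\sqrt\kappa\,\tanh\!\big(\sqrt\kappa\,r-\mathrm{artanh}(\lambda/\sqrt\kappa)\big),
\]
which is negative precisely for $r<r^{\ast}$. Hence on $\{\rho<r^{\ast}\}$, wherever $\rho$ is smooth, the level sets are strictly convex, uniformly so on any set $\{\rho\le t\}$ with $t<r^{\ast}$.

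Next, $\mathrm{Hess}(f)=\psi''(\rho)\,d\rho\otimes d\rho+\psi'(\rho)\,\mathrm{Hess}(\rho)$ is block diagonal for the splitting $TM=T\Sigma_r\oplus\mathbb R\nabla\rho$: on $\mathbb R\nabla\rho$ it equals $\psi''(\rho)>0$, and on $T\Sigma_r$ it equals $\psi'(\rho)B(r)\ge\psi'(\rho)\bar\beta(r)\,g>0$ on $\{\rho<r^{\ast}\}$ (a product of two negative quantities). So $f$ is strictly convex wherever it is smooth on $\{\rho<r^{\ast}\}$. In case (b), $M_t\subset\{\rho<r^{\ast}\}$, so — granting the smoothness discussed below — this already produces the required $f$ on $M_t$, and the exhaustion property is immediate: $\{x\in M_t:f(x)\ge c\}=\{\rho\le\psi^{-1}(c)\}$ is a closed subset of the compact manifold $M$, hence compact, and it is contained in $M_t$ exactly when $\psi^{-1}(c)<t$, i.e.\ when $c>\inf_{M_t}f=\psi(t)$. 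Case (a) amounts to $R<r^{\ast}$, so the level-set estimate holds uniformly on all of $M$; a global smooth strictly convex function is then produced from it by a standard convexity-preserving regularization — this is part (d) of Lemma~\ref{lemma_convex_anydimension}, whose proof I would not reproduce here.

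The one genuinely delicate ingredient, and the step I expect to be the main obstacle, is the \emph{smoothness} of $\rho=d(\cdot,\partial M)$ on $M_t$ — equivalently, that the normal exponential map of $\partial M$ is a diffeomorphism onto $M_t$, i.e.\ that $\partial M$ has neither a focal point nor a cut point within distance $t$. This is a tubular-neighborhood estimate for a strictly convex hypersurface, and it is exactly here that the curvature lower bound $K\ge-\kappa$ and the threshold $r^{\ast}$ enter; I would obtain it from the comparison-geometry results of Burago--Zalgaller \cite{BuragoZallgaller1988} and Gulliver--Lasiecka--Littman--Triggiani \cite{GulliverLasieckaLittmanTriggiani2004}. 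Granting that, the rest is the elementary ODE comparison and linear algebra sketched above.
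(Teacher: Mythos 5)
Your Riccati comparison and the computation of $\mathrm{Hess}(\psi\circ\rho)$ on the set where $\rho=d(\cdot,\partial M)$ is smooth are exactly the paper's first step (with $h(t)=-t+t^2/4R$ in place of your $\psi$). The genuine gap is the step you flag yourself and then defer to references: the claim that $\rho$ is smooth on $M_t$, i.e.\ that $\partial M$ has no focal or cut point within distance $t<r^{\ast}$. This is false in general under the stated hypotheses, and no comparison theorem can deliver it: $K\geq-\kappa$ together with a lower bound $\lambda$ on the \emph{smallest} principal curvature gives an upper bound on $\mathrm{Hess}(\rho)$ (your $B(r)\le\bar\beta(r)g$), but focal points are governed by curvature \emph{upper} bounds and by the \emph{largest} principal curvatures, and cut points can also arise from two distinct minimizing normal geodesics meeting. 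Already in case (a) the round Euclidean ball (with $\kappa$ small) satisfies the hypotheses while $\rho$ fails to be smooth at the center. For case (b), take the solid oblate ellipsoid $x^2+y^2+z^2/c^2\le1$ in $\mathbb{R}^3$ with $c=10^{-2}$: here $\lambda=c$, $R=c$, the medial axis comes within distance $c^2=10^{-4}$ of the boundary near the equator, and choosing $\sqrt{\kappa}=2$ puts us in case (b) with $r^{\ast}=\tfrac12\,\mathrm{artanh}(c/2)\approx 2.5\times10^{-3}$; so for $10^{-4}<t<r^{\ast}$ the cut locus meets $M_t$ and the normal exponential map is not a diffeomorphism onto $M_t$. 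Neither \cite{BuragoZallgaller1988} nor \cite{GulliverLasieckaLittmanTriggiani2004} gives a tubular-neighborhood estimate of the kind you need from these one-sided hypotheses. A second, smaller problem: disposing of case (a) by citing Lemma \ref{lemma_convex_anydimension}(d) is circular, since the paper proves that part \emph{from} the present lemma.

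The paper closes exactly this gap differently: it never claims $\rho$ is smooth on the region of interest. It works with the continuous function $f=h(r)$, proves the Hessian estimate on $M\setminus\omega_{\partial M}$ as you do, and at a cut point $x=\gamma_{z,n}(\tau_{\partial M}(z))$ it produces a smooth \emph{support function from below} in the sense of Eschenburg: replace $\partial M$ near $z$ by a small hypersurface $\bar S$ with all principal curvatures $\mu<\lambda$, so that (by \cite[Lemma 3.1]{Esch86}) the first focal point of $\bar S$ along $\gamma_{z,n}$ comes strictly later, $\bar r=d(\cdot,\bar S)$ is smooth near $x$, $r\le\bar r$ with equality at $x$, and the same Riccati comparison (with $\mu$, $\kappa'>\kappa$) gives $\mathrm{Hess}(h(\bar r))\ge\eta\,\mathrm{Id}$ with $\eta$ arbitrarily close to $\eps$. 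This makes $f$ strictly convex in the barrier sense on all of $M$ (resp.\ on a collar $M_{t_2}$ with $t<t_1<t_2<r^{\ast}$ in case (b)), and only then is a smooth strictly convex function obtained via the Greene--Wu smoothing \cite{GW}, after extending $f$ slightly past $\partial M$; the exhaustion statement in (b) is read off from the level sets as you do. Note that your "standard convexity-preserving regularization" cannot substitute for this: smoothing requires barrier-sense convexity across the cut locus as input, and the natural support functions for $\psi\circ\rho=\max_{z\in\partial M}\psi(d(\cdot,z))$ are built from distance-to-a-point functions, whose Hessians are not bounded below under a mere lower curvature bound — which is precisely why Eschenburg's support-hypersurface construction is the essential missing ingredient.
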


Parts (a) and (b) are related to the foliation conditions in Section \ref{sec_introduction}. The various foliation conditions formulated in \cite{UV15}, \cite{SUV13} in fact reduce to the ones in this paper. For instance:

\begin{lemma} \label{lemma_foliation_general}
Let $(M,g)$ be a compact manifold with smooth boundary, let $a < b$, and assume that $\rho: M \to \mR$ is a smooth function with level sets $\Sigma_t = \rho^{-1}(t)$ so that $\Sigma_t$ is strictly convex when viewed from $\rho^{-1}((a,t])$ and $d\rho|_{\Sigma_t} \neq 0$ for $t \in (a,b]$. Assume also that $\Sigma_b = \partial M$. If 
\[
U = \cup_{t \in (a,b]} \Sigma_t,
\]
then any connected component of $U$ satisfies the foliation condition.
\end{lemma}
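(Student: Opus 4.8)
The plan is to build the required function by composing $\rho$ with a suitably convex increasing function of one real variable. First note that $U=\bigcup_{t\in(a,b]}\Sigma_t=\rho^{-1}((a,b])$ is open, and that any connected component $U'$ of $U$ is open in $M$ and closed in $U$. I look for $f=\phi\circ\rho$ with $\phi\in C^\infty((a,b])$, $\phi'>0$ and $\phi''>0$; then $f\in C^\infty(U)$ and
\[
\nabla^2 f=\phi'(\rho)\,\nabla^2\rho+\phi''(\rho)\,d\rho\otimes d\rho,
\]
so everything reduces to choosing $\phi$ making this quadratic form positive definite on $U$, and then checking the exhaustion property on each $U'$.

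For the convexity, fix $x\in U$ with $\rho(x)=\tau$ and decompose $w\in T_xM$ orthogonally as $w=v+c\,n$, where $n=\nabla\rho(x)/\abs{\nabla\rho(x)}$ (defined since $d\rho|_{\Sigma_\tau}\neq0$) and $v\in T_x\Sigma_\tau=\ker d\rho(x)$; then $w\rho=c\abs{\nabla\rho(x)}$ and
\[
\nabla^2 f(w,w)=\phi'(\tau)\bigl[\nabla^2\rho(v,v)+2c\,\nabla^2\rho(v,n)+c^2\,\nabla^2\rho(n,n)\bigr]+\phi''(\tau)\,c^2\abs{\nabla\rho(x)}^2.
\]
Near $\Sigma_t$ the region $\rho^{-1}((a,t])$ is cut out by $t-\rho\ge0$, so its strict convexity as a boundary hypersurface means $\nabla^2(t-\rho)|_{T\Sigma_t}<0$, i.e.\ $\nabla^2\rho(v,v)>0$ for $0\neq v\in T\Sigma_t$. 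On each compact set $K_s:=\rho^{-1}([s,b])$, $s\in(a,b]$, compactness upgrades this to $\nabla^2\rho(v,v)\geq\delta_s\abs{v}^2$ and yields constants with $\abs{\nabla^2\rho(v,n)}\leq C_s\abs{v}$, $\abs{\nabla^2\rho(n,n)}\leq C_s$, $\abs{\nabla\rho}\geq m_s>0$. Absorbing the cross term with Young's inequality gives, for $x\in K_s$ at level $\tau$,
\[
\nabla^2 f(w,w)\geq \frac12\,\phi'(\tau)\,\delta_s\abs{v}^2+\bigl(\phi''(\tau)\,m_s^2-D_s\,\phi'(\tau)\bigr)c^2,
\]
with $D_s$ depending only on $C_s$ and $\delta_s$. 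Choosing $s=\tau$ for a point at level $\tau$, we see that $\nabla^2 f$ is positive definite there as soon as $\phi''(\tau)/\phi'(\tau)>G(\tau):=D_\tau/m_\tau^2$. The function $G$ is non-increasing, hence locally bounded, on $(a,b]$, but it may blow up as $\tau\to a^+$; coping with this degeneration at the inner edge of $U$ is the crux of the argument.

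It is handled because $a\notin\rho(U)$. Pick a smooth function $\tilde G$ on $(a,b]$ with $\tilde G>G$, set $\phi'(\tau):=\exp\bigl(-\int_\tau^b\tilde G(\sigma)\,d\sigma\bigr)$ and $\phi(\tau):=\int_{\tau_0}^\tau\phi'$ for a fixed $\tau_0\in(a,b)$. Then $\phi\in C^\infty((a,b])$ is increasing with $(\log\phi')'=\tilde G>G>0$, so $\phi''>0$, and by the previous step $f=\phi\circ\rho$ is strictly convex on $U$, hence so is $f|_{U'}$ on each component $U'$. Finally, for $c>\inf_{U'}f$ the set $\{x\in U':f(x)\geq c\}$ is empty or equals $U'\cap\rho^{-1}([\phi^{-1}(c),b])$ with $\phi^{-1}(c)>a$; since $\rho^{-1}([\phi^{-1}(c),b])$ is a closed subset of the compact manifold $M$, hence compact, and lies in $U$, its intersection with $U'$ (which is closed in $U$) is a closed subset of a compact set, hence compact. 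Therefore $f|_{U'}$ is a smooth strictly convex exhaustion function and $U'$ satisfies the foliation condition.
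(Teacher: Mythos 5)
Your proof is correct and follows essentially the same route as the paper: both take $f=\phi\circ\rho$ (the paper's $h\circ\rho$), use the strict convexity of the level sets to control the tangential part of $\mathrm{Hess}(\rho)$, absorb the cross and normal terms by requiring $\phi''/\phi'$ to dominate a locally bounded function on $(a,b]$, and realize this via an exponential/ODE choice of $\phi'$ (the paper solves $h''+\tilde c h'=0$, whose solution is exactly your $\exp\bigl(-\int\tilde G\bigr)$ form), then get the exhaustion property from compactness of $\rho^{-1}([\,\cdot\,,b])$ and closedness of components in $U$. Your use of Young's inequality with constants on the compact sets $K_s$ and the monotonicity of $G$ is just a slightly different bookkeeping of the paper's per-level infimum $c(t)$ and smooth minorant $\tilde c$.
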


Let us now give the proofs of the above results. The ``no focal points'' assumption in Lemma \ref{lemma_convex_anydimension} means that no geodesic segment in $M$ has focal points. It is well known that this implies that $d(\,\cdot\,,p)^2$ for fixed $p$ is strictly convex. For more details on the following equivalent condition and focal points of submanifolds, see \cite{BishopCrittenden1964}, \cite{Eberlein1973}.

\begin{Def*}
We say that $(M,g)$ has no focal points if for any geodesic $\gamma: [0,T] \to M$ and for any nontrivial normal Jacobi field $J(t)$ along $\gamma$ with $J(0) = 0$, one has $\frac{d}{dt} \abs{J(t)}^2 > 0$ for $t  > 0$.
\end{Def*}

\begin{proof}[Proof of Lemma \ref{lemma_convex_anydimension}]
(a) follows from (b): if $K \leq 0$ and if $J$ is a nontrivial normal Jacobi field along a geodesic $\gamma$ with $J(0) = 0$, then $\frac{d}{dt} |J(t)|^2 \big|_{t=0} = 0$ and 
\[
\frac{1}{2} \frac{d^2}{dt^2} |J(t)|^2 = \abs{D_t J}^2 - \langle R(J,\dot{\gamma})\dot{\gamma}, J \rangle \geq \abs{D_t J}^2
\]
showing that $\frac{d^2}{dt^2} |J(t)|^2 \geq 0$ and $\frac{d^2}{dt^2} |J(t)|^2|_{t=0} > 0$. Thus $M$ has no focal points.

(b) Fix any $p \in M$. We will show that $f(x) = \frac{1}{2} d(x,p)^2$ is a smooth strictly convex function in $M$. The manifold $M$ is simply connected with strictly convex boundary and has no conjugate points (since it has no focal points), thus $(M,g)$ is simple and hence diffeomorphic to a closed ball, and $\exp_p$ is a diffeomorphism onto $M$. This is seen as follows: if $(M,g)$ has no conjugate points, $\exp_{p}$ is a surjective covering map (this is proved as in the classical Cartan-Hadamard theorem) and since $M$ is simply connected, $\exp_{p}$ becomes a diffeomorphism.
The domain of $\exp_{p}$ is ball, hence $M$ is a ball too. Alternatively, the injectivity of $\exp_{p}$ may be derived from \cite[Lemma 2.2]{GuM}.

The Hessian of the distance function $r: M \to \mR$, $r(x) = d(x,p)$ may be computed via Jacobi fields, or via the second variation formula (see for instance \cite{Petersen2006}). Let $x \in M \setminus \{p\}$ and $v \in S_x M$ with $v \perp \nabla r$, let $\gamma: [0,T] \to M$ be the geodesic joining $p$ to $x$, and let $J$ be the Jacobi field along $\gamma$ with $J(0) = 0$ and $J(T) = v$. Then 
\begin{align*}
\mathrm{Hess}_x(r)(v,v) = \langle D_t J(T), J(T) \rangle = \frac{1}{2} \frac{d}{dt} |J(t)|^2 \Big|_{t=T}.
\end{align*}
Using the no focal points condition we get $\mathrm{Hess}(r)(v,v) > 0$ whenever $x \neq p$, $v \perp \nabla r$ and $v \neq 0$. Writing $f(x) = h(r(x))$ with $h(t) = \frac{1}{2} t^2$, we have $f \in C^{\infty}(M)$ and 
\[
\mathrm{Hess}(f) = h'(r) \mathrm{Hess}(r) + h''(r) dr \otimes dr.
\]
Since $\mathrm{Hess}(r)(W,\nabla r) = \frac{1}{2} W (\abs{\nabla r}^2) = 0$ for any $W$, we see that $f$ is strictly convex.

(c) This is proved in \cite{Esch86}, and also follows from (d).

(d) This will follow from Lemma \ref{lemma_convex_neighborhood}.

Finally, assume that $(M,g)$ is compact with strictly convex boundary and that $f: M \to \mR$ is a smooth strictly convex function. If $\gamma$ is a unit speed geodesic, then by strict convexity 
\[
\frac{d^2}{dt^2} f(\gamma(t)) = \mathrm{Hess}(f)(\dot{\gamma}(t), \dot{\gamma}(t)) \geq c > 0 \text{ for all $t$}.
\]
Now if $\gamma: [0,\infty) \to M$ would be a trapped geodesic, one would have $f(\gamma(t)) \geq f(\gamma(0)) + \frac{c}{4} t^2$ for $t$ large, contradicting the fact that $f$ is bounded. This shows that $M$ is nontrapping. By \cite{Thorbergsson1978}, any compact nontrapping manifold with strictly convex boundary is contractible. If $\Sigma$ were a closed minimal submanifold contained in $M$, then for any $\Sigma$-geodesic $\eta$ with $\dot{\eta}(0) = Y$ one would have 
\[
\mathrm{Hess}_{\Sigma}(f|_{\Sigma})(Y,Y) = \frac{d^2}{dt^2} f(\eta(t)) \Big|_{t=0} = \mathrm{Hess}(f)(Y,Y) + \langle \nabla f, \nabla_{\dot{\eta}} \dot{\eta} \rangle |_{t=0}.
\]
Since $\eta$ is a $\Sigma$-geodesic, the Gauss formula gives $\nabla_{\dot{\eta}} \dot{\eta}|_{t=0} = \Pi(Y,Y)$ where $\Pi$ is the second fundamental form of $\Sigma$. If $\{ E_1, \ldots, E_k \}$ is a local orthonormal frame of $T \Sigma$, the fact that $\Sigma$ is minimal (i.e.\ $\sum \Pi(E_j,E_j) = 0$) implies that 
\[
\Delta_{\Sigma}(f|_{\Sigma}) = \sum_{j=1}^k \mathrm{Hess}(f)(E_j,E_j) > 0
\]
using strict convexity. Thus $f|_{\Sigma}$ would be subharmonic in the closed manifold $\Sigma$, which is not possible. More generally, if $\Phi: \Sigma \to M$ were a smooth harmonic map from some closed manifold $\Sigma$ into $M$, then $\Phi^* f: \Sigma \to \mR$ would be subharmonic \cite[Proposition 10.4]{Aubin1998}, which is again not possible.

Now let $p, q \in M$ be two distinct local minimum points of $f$. Since $\partial M$ is strictly convex, there is a smooth unit speed geodesic $\gamma: [0,T] \to M$ joining $p$ and $q$ (see for instance \cite{BGS}). The smooth function $h(t) = f(\gamma(t))$ satisfies $h'(0) \geq 0$, $h'(T) \leq 0$ but $h''(t) > 0$, which is a contradiction. Thus $f$ has at most one local minimum point. Since $M$ is compact $f$ attains a global minimum at some $x_0 \in M$. This is the unique local minimum point and the only possible critical point, since by strict convexity any critical point is a local minimum.
\end{proof}

\begin{rem*}
Lemma \ref{lemma_convex_anydimension} (a) and (b) fail without the simply connectedness assumption, since negatively curved strictly convex manifolds with trapped geodesics (such as a piece of the catenoid) do not admit a smooth strictly convex function.
\end{rem*}

\begin{proof}[Proof of Lemma \ref{lemma_convex_twodimensions}]
Clearly (a) implies (b). It is proved in \cite{BeteluGulliverLittman2002} that (b) implies (c). The idea is to construct a strictly convex function by evolving the strictly convex hypersurface $\partial M$ via mean curvature flow. Finally, (c) implies (a) in any dimension as discussed in Lemma \ref{lemma_convex_anydimension}.
\end{proof}

We proceed to Lemma \ref{lemma_convex_neighborhood}. The convex functions there are constructed from the distance function to the boundary. This function is only smooth up to the boundary cut locus, and hence we will need to consider strictly convex functions that are continuous in $M$. We follow \cite{Esch86} and employ convexity in the barrier sense (see \cite{MMU} for relations between the barrier, viscosity and distributional definitions of convexity).

\begin{Def*}
Let $(M,g)$ be a compact manifold. We say that a continuous function $f: M \to \mR$ is $\eps$-convex if for any $x \in M$ and any $\eta < \eps$ there is a support function $f_{x,\eta}$ (that is, a smooth function near $x$ with $f_{x,\eta} \leq f$ and $f_{x,\eta}(x) = f(x)$) such that $\mathrm{Hess}(f_{x,\eta})(v,v) \geq \eta \abs{v}^2$ near $x$. We say that $f: M \to \mR$ is strictly convex if it is $\eps$-convex for some $\eps > 0$.
\end{Def*}

As discussed in \cite{Esch86}, a smooth function is $\eps$-convex if and only if $\mathrm{Hess}(f)(v,v) \geq \eps \abs{v}^2$, and the above notion of strict convexity is the same as in \cite{GW}.

We recall some facts on boundary normal coordinates, see \cite{KKL}, \cite{ItohTanaka2001}, \cite{LiNirenberg2005}. Let $(M,g)$ be a compact manifold with smooth boundary. If $x \in M$, we consider the distance to the boundary 
\[
d(x,\partial M) = \min_{z \in \partial M} d(x,z).
\]
For any $x \in M$, there exists $z \in \partial M$ such that 
\[
x = \gamma_{z,n}(s)
\]
where $s = d(x,\partial M) = d(x,z)$ and $\gamma_{z,n}$ is the normal geodesic starting at $z$ in the inner normal direction $n$ of $\partial M$. If $z \in \partial M$ define the boundary cut value 
\[
\tau_{\partial M}(z) = \sup \{ t  \,;\, \gamma_{z,n}|_{[0,t]} \text{ is the unique shortest geodesic from $\gamma_{z,n}(t)$ to $\partial M$} \}.
\]
Then $\tau_{\partial M}$ is a Lipschitz continuous positive function on $\partial M$. The boundary cut locus is defined by 
\[
\omega_{\partial M} = \{ \gamma_{z,n}(\tau_{\partial M}(z)) \,;\, z \in \partial M \}.
\]
This set has zero measure in $M$, and $d(\,\cdot\,,\partial M)$ is smooth in $M \setminus \omega_{\partial M}$. Observe also that 
\begin{align*}
M \setminus \omega_{\partial M} &= \{ \gamma_{z,n}(t) \,;\, z \in \partial M, \ t \in [0,\tau_{\partial M}(z)) \}, \\
M &= \{ \gamma_{z,n}(t) \,;\, z \in \partial M, \ t \in [0,\tau_{\partial M}(z)] \}
\end{align*}
and that boundary normal coordinates $\exp_{\partial M}^{-1}: (z,t) \mapsto \gamma_{z,n}(t)$ extend as a diffeomorphism onto $M \setminus \omega_{\partial M}$.

\begin{proof}[Proof of Lemma \ref{lemma_convex_neighborhood}]
Write $r(x) = d(x,\partial M)$. Note that $r$ is a distance function, i.e.\ $\lvert \nabla r \rvert = 1$, and $\nabla r(\gamma_{z,n}(t)) = \dot{\gamma}_{z,n}(t)$ for $t < \tau_{\partial M}(z)$. Since $r \geq 0$ in $M$ and $\partial M = \{ r = 0 \}$ has principal curvatures $\geq \lambda > 0$, for $z \in \partial M$ one has 
\[
\mathrm{Hess}(r)_z(v,v) \leq -\lambda \abs{v}^2 \text{ when } \langle v, n(z) \rangle = 0.
\] 
Also, at any point in $M \setminus \omega_{\partial M}$ and for any vector field $W$, we have 
\[
\mathrm{Hess}(r)(W,\nabla r) = \langle \nabla_W \nabla r, \nabla r \rangle = \frac{1}{2} W (\langle \nabla r, \nabla r \rangle) = 0.
\]

Now fix $z \in \partial M$, fix an orthonormal basis $\{ E_1, \ldots, E_{n-1} \}$ of $T_z \partial M$, and parallel transport this basis along $\gamma_{z,n}$ to obtain an orthonormal frame $\{ E_1(t), \ldots, E_n(t) \}$ on $\gamma_{z,n}|_{[0,\tau_{\partial M}(z))}$ with $E_n(t) = \dot{\gamma}_{z,n}(t)$. Consider the symmetric matrix $$A_1(t) = (\mathrm{Hess}(r)_{\gamma_{z,n}(t)}(E_{\alpha}(t), E_{\beta}(t))_{\alpha,\beta=1}^{n-1}.$$
The curvature equation $\nabla_{\partial_r} S + S^2 + R_{\partial_r} = 0$ \cite{Petersen2006}, where $S$ is $\mathrm{Hess}(r)$ written as a $(1,1)$-tensor and $R_{\partial_r}(V,W) = R(V,\partial_r,\partial_r,W)$, implies that 
\[
A_1'(t) + A_1(t)^2 + R_1(t) = 0
\]
in a maximal interval $[0,t_1)$ where $(R_1)_{\alpha \beta}(t) = R(E_{\alpha}(t),\partial_r,\partial_r, E_{\beta}(t))$.
We compare $A_1(t)$ with the solution of 
\begin{align*}
A_2'(t) + A_2(t)^2 + R_2(t) &= 0, \\
A_2(0) &= -\lambda \mathrm{Id}
\end{align*}
in a maximal interval $[0,t_2)$ where $R_2(t) = -\kappa \mathrm{Id}$. Now one has $R_1(t) \geq R_2(t)$ and $A_1(0) \leq A_2(0)$, so the matrix Riccati comparison principle \cite{EH} implies that $t_1 \leq t_2$ and $A_1(t) \leq A_2(t)$ in $[0,t_1)$.

Now one has $A_2(t) = \alpha(t) \mathrm{Id}$ where 
\[
\alpha' + \alpha^2 - \kappa = 0, \quad \alpha(0) = -\lambda.
\]
The solution is given by 
\[
\alpha(t) = \left\{ \begin{array}{cl} \sqrt{\kappa} \,\mathrm{coth}(\sqrt{\kappa}(t-t_0)), & \lambda > \sqrt{\kappa}, \\ -\sqrt{\kappa}, & \lambda = \sqrt{\kappa}, \\ \sqrt{\kappa} \,\mathrm{tanh}(\sqrt{\kappa}(t-t_0)), & \lambda < \sqrt{\kappa} \end{array} \right.
\]
where 
\[
t_0 = \left\{ \begin{array}{cl} \frac{1}{\sqrt{\kappa}} \,\mathrm{artanh}(\frac{\sqrt{\kappa}}{\lambda}), & \lambda > \sqrt{\kappa}, \\ \frac{1}{\sqrt{\kappa}} \,\mathrm{artanh}(\frac{\lambda}{\sqrt{\kappa}}), & \lambda < \sqrt{\kappa}. \end{array} \right.
\]
The maximal time of existence is $t_2 = t_0$ if $\lambda > \sqrt{\kappa}$ and $t_2 = \infty$ otherwise.

Let us assume for now that $\lambda > \sqrt{\kappa} \,\mathrm{tanh}(\sqrt{\kappa} R)$. By the previous discussion one has 
\[
\mathrm{Hess}_x(r)(v,v) \leq \alpha(r(x)) \abs{v}^2 \text{ when } x \in M \setminus \omega_{\partial M}, \ \langle v, \nabla r \rangle = 0.
\]
where $-\infty < \alpha(r) < 0$ for $r \in [0,R]$. To deal with the normal directions we write $f(x) = h(r(x))$ where $h(t) = -t + \frac{t^2}{4R}$. Then we have 
\[
\mathrm{Hess}(f) = h'(r) \mathrm{Hess}(r) + h''(r) dr \otimes dr = (-1 + \frac{r}{2R}) \mathrm{Hess}(r) + \frac{1}{2R} dr \otimes dr.
\]
Since $r \leq R$ and $\mathrm{Hess}(r)(W,\nabla r) = 0$ for any $W$, there is $\eps > 0$ so that 
\[
\mathrm{Hess}(f)_x(v,v) \geq \eps \abs{v}^2 \ \ \text{when } x \in M \setminus \omega_{\partial M}.
\]

It remains to show that $f$ is $\eps$-convex also at any point $x \in \omega_{\partial M}$. We argue as in \cite{Esch86} and construct a support function using a suitable support hypersurface. Let $x = \gamma_{z,n}(t')$ for some $z \in \partial M$ with $t' = \tau_{\partial M}(z) = r(x)$. For any $\mu < \lambda$ let $\bar{S}$ be a small piece of a hypersurface through $z$ with unit normal $\bar{N}$ satisfying $\bar{N}(z) = n(z)$, and with all principal curvatures equal to $\mu$. It follows that 
\[
\langle D_v n, v \rangle \leq -\lambda \abs{v}^2 < -\mu \abs{v}^2 = \langle D_v \bar{N}, v \rangle, \quad v \perp n(z),
\]
and by \cite[Lemma 3.1]{Esch86} the first focal point of $\bar{S}$ along $\gamma_{z,n}$ comes after the first focal point of $\partial M$. Thus if $\bar{S}$ is chosen small enough then $\bar{r} = d(\,\cdot\,,\bar{S})$ is smooth near $x$ with $r(x) = \bar{r}(x)$ and $r \leq \bar{r}$. Writing $\bar{f} = h(\bar{r})$ we obtain a smooth function $\bar{f}$ near $x$ with $\bar{f} \leq f$ and $\bar{f}(x) = f(x)$. Also, repeating the matrix Riccati comparison argument with $\lambda$ replaced by $\mu$ and $\kappa$ replaced by $\kappa' > \kappa$ (related to an extension of $M$ to a larger manifold), we obtain 
\[
\mathrm{Hess}(\bar{f})_x(v,v) \geq \eta \abs{v}^2
\]
where $\eta < \eps$ can be chosen arbitrarily close to $\eps$, by taking $\mu$ close to $\lambda$, $\kappa'$ close to $\kappa$, and $\bar{S}$ small. Thus $f$ is $\eps$-convex in all of $M$.

We have shown that if $\lambda > \sqrt{\kappa} \,\mathrm{tanh}(\sqrt{\kappa} R)$, then $f = -r + \frac{r^2}{4R}$ is a continuous strictly convex function in $M$ where $r = d(\,\cdot\,,\partial M)$. Since $f$ is smooth near $\partial M$, we may extend $f$ to a slightly larger manifold $M'$ so that $f$ remains strictly convex in $M'$. The smoothing procedure of Greene-Wu \cite{GW} then yields a smooth strictly convex function in $M$. Finally, if we assume instead that $\lambda < \sqrt{\kappa} \,\mathrm{tanh}(\sqrt{\kappa} R)$, then we may repeat the above argument in $M_{t_2}$ whenever $t < t_1 < t_2 < \frac{1}{\sqrt{\kappa}} \mathrm{artanh}(\frac{\lambda}{\sqrt{\kappa}})$ to obtain a smooth strictly convex function in $M_{t_1}$, and the restriction $f$ of this function to $M_t$ has the property that $\{ x \in M_t \,;\, f(x) \geq c \}$ is compact for $c > \inf_{x \in M_t} f(x)$.
\end{proof}

\begin{rem*}
Let $(M,g)$ be compact with strictly convex boundary, let $r(x) = d(x,\partial M)$, and let $t > 0$. We note that the strict convexity of the function $f = -r + \frac{r^2}{4R}$ in $\overline{M_t}$ could also be characterized in terms of $\partial M$-Jacobi fields \cite{BishopCrittenden1964}, since the Hessian of $r$ can be expressed in terms of $\partial M$-Jacobi fields similarly as in the proof of Lemma \ref{lemma_convex_anydimension}(b).
\end{rem*}

Finally we consider the case where a subset $U$ of $M$ satisfies the foliation condition in Section \ref{sec_introduction}. The next result gives further information on the associated exhaustion function.

\begin{lemma} \label{lemma_exhaustion_function_properties}
Let $U$ be a connected open subset of $M$, and assume that $f: U \to \mathbb R$ is a smooth strictly convex exhaustion function.
\begin{enumerate}
\item 
If $f(x_0) = \inf f$ for some $x_0 \in U$, then $U = M$, $f$ is a smooth strictly convex function on $M$, and $x_0$ is the unique minimum point and the only possible critical point of $f$.
\item 
If $f$ does not attain its infimum in $U$, then $f$ has no critical points in $U$. Moreover, $f(U) = (a,b]$ where $-\infty \leq a < b < \infty$, $f$ attains its maximum $b$ on (and only on) $\partial M$, and $f(x_j) \to a$ when $x_j \to x$ for any $x \in \partial U \setminus \partial M$. In particular, $U \cap \partial M \neq \emptyset$.
\end{enumerate}
\end{lemma}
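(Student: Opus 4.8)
The plan is to run both parts through the same machinery. Along any unit-speed geodesic $\gamma$ of $M$, strict convexity gives $(f\circ\gamma)''=\mathrm{Hess}(f)(\dot\gamma,\dot\gamma)\geq\eps$ for a fixed $\eps>0$; in particular, if $(f\circ\gamma)'(0)\geq 0$ then $f(\gamma(t))\geq f(\gamma(0))+\frac{\eps}{2}t^2$ as long as $\gamma$ stays in the domain of $f$. I will combine this with a consequence of the exhaustion hypothesis, call it $(\star)$: \emph{if $x\in\overline U\setminus U$ and $x_j\in U$ with $x_j\to x$, then $f(x_j)\to\inf_U f$} --- indeed, writing $a=\inf_U f$, if $\limsup_j f(x_j)>a$ then along a subsequence $f(x_j)\geq c$ for some $c>a$, so the $x_j$ lie in the compact (hence $M$-closed) set $\{y\in U:f(y)\geq c\}$, forcing the limit $x$ into $U$, a contradiction; and $\liminf_j f(x_j)\geq a$ is clear. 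The third tool is the fact, already used in the proof of Lemma \ref{lemma_convex_anydimension} and coming from strict convexity of $\partial M$, that any two points of $M$ are joined by a geodesic segment lying in $M$. The crux of the whole argument --- and the only point needing any care --- is the following: \emph{from an extremal point of $f$, no geodesic segment of $M$ can reach the frontier $\overline U\setminus U$ before exiting $M$}; combined with the previous fact this will pin down $U$.

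\emph{Part (1).} Put $f(x_0)=\inf_U f=:c_0$ with $x_0\in U$, and let $\gamma:[0,T]\to M$ be any geodesic segment with $\gamma(0)=x_0$. Since $x_0$ minimizes $f$ on the open set $U$, we have $(f\circ\gamma)'(0)=df_{x_0}(\dot\gamma(0))\geq 0$ (it equals $0$ if $x_0$ is interior), so $f(\gamma(t))\geq c_0+\frac{\eps}{2}t^2$ while $\gamma$ stays in $U$. If some $\gamma(t)\notin U$, let $t_0>0$ be the first such time; then $\gamma(t_0)\in\overline U\setminus U$, so $(\star)$ gives $f(\gamma(t))\to c_0$ as $t\to t_0^-$, contradicting $f(\gamma(t))\geq c_0+\frac{\eps}{2}t^2\to c_0+\frac{\eps}{2}t_0^2>c_0$. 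Hence $\gamma\subset U$. Taking $\gamma$ to join $x_0$ to an arbitrary $q\in M$ shows $q\in U$, so $U=M$. Then $f$ is a smooth strictly convex function on all of $M$, and the last two bullets of Lemma \ref{lemma_convex_anydimension} give that $x_0$ is the unique minimum point and the only possible critical point of $f$.

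\emph{Part (2).} Now $a=\inf_U f\in[-\infty,\infty)$ is not attained, so $f$ is non-constant and $a<b:=\sup_U f$. First, $b$ is finite and attained: if $y_j\in U$ with $f(y_j)\to b$, a subsequence converges to some $y_0\in\overline U$; if $y_0\notin U$ then $(\star)$ would give $f(y_j)\to a\neq b$, so $y_0\in U$ and $b=f(y_0)<\infty$. Next, $f$ has no critical points: if $df_{x_*}=0$ for some $x_*\in U$, then exactly as in Part (1) every geodesic from $x_*$ satisfies $f(\gamma(t))\geq f(x_*)+\frac{\eps}{2}t^2>a$ while in $U$, so by $(\star)$ it never meets $\overline U\setminus U$ and therefore stays in $U$ until it exits $M$; since any $q\in M$ is joined to $x_*$ by such a segment, $U=M$, and then, $M$ being compact, $f$ attains its infimum --- contradicting the hypothesis. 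In particular the maximum point $y_0$ cannot be interior (an interior maximum is critical), so $\emptyset\neq f^{-1}(b)\subset\partial M$; this is precisely the assertion that $f$ attains its maximum on, and only on, $\partial M$, and, as $f^{-1}(b)\subset U$, it gives $U\cap\partial M\neq\emptyset$. Finally, $f(U)$ is an interval ($U$ connected, $f$ continuous) that contains $b$ and has infimum $a\notin f(U)$, so $f(U)=(a,b]$ with $-\infty\leq a<b<\infty$; and $f(x_j)\to a$ whenever $x_j\to x\in\partial U\setminus\partial M$ is the case of $(\star)$ with $x\notin\partial M$.
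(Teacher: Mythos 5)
Your proof is correct and follows essentially the same route as the paper's: monotone growth of $f$ along geodesics issued from the minimum (or a critical) point, compactness of the superlevel sets $\{f\geq c\}$, $c>\inf_U f$ (which you package as $(\star)$), to keep such geodesics from reaching the frontier of $U$, and strict convexity of $\partial M$ to join points by geodesics, giving $U=M$ and then the remaining claims exactly as in the paper. The only nitpick is the opening claim that $\mathrm{Hess}(f)(\dot\gamma,\dot\gamma)\geq\eps$ for a \emph{uniform} $\eps>0$: on the possibly non-compact set $U$ only pointwise positivity of the Hessian is guaranteed, but this is harmless, since $(f\circ\gamma)'(0)\geq 0$ together with $(f\circ\gamma)''>0$ already makes $f\circ\gamma$ strictly increasing, which is all that your contradictions with $(\star)$ require (and is what the paper's proof uses).
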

\begin{proof}
(1) Let $a = \inf f$, and assume that $f(x_0) = a$ for some $x_0 \in U$. If $x$ is any point in $U$, then $x_0$ and $x$ can be connected by a smooth unit speed geodesic $\gamma: [0,T] \to M$ since $(M,g)$ is strictly convex (see \cite{BGS}). We next show that this geodesic must stay in $U$. For if not, the set $\{ t \in [0,T] \,;\, \gamma(t) \notin U \}$ would be nonempty with infimum $t_0 > 0$, and then $\gamma(t) \in U$ for $t < t_0$. The function $h(t) = f(\gamma(t))$ would satisfy 
\[
h(t) = a + h'(0) t + \frac{1}{2} h''(c(t)) t^2, \qquad t \in [0,t_0),
\]
for some $c(t) \in [0,t]$. But since $f \geq a$ in $U$ one must have $h'(0) \geq 0$, and strict convexity of $f$ implies that $h''(t) > 0$ for $t \in [0,t_0)$. Thus $h(t)$ is strictly increasing in $[0,t_0)$ and for some $\eps > 0$ the set $\{ \gamma(t) \,;\, t \in [\eps,t_0) \}$ is contained in the compact set $\{ x \in U \,;\, f(x) \geq a + c \eps^2 \}$ where $c > 0$, thus $\gamma(t_0) \in U$ which is a contradiction.

We have shown that if $f$ attains its infimum at some $x_0 \in U$, then any point in $U$ can be connected to $x_0$ by a geodesic in $U$. Now let $S_{\eps} = \{ x \in M \,;\, d(x,x_0) < \eps \}$ where $\eps > 0$ is so small that $\overline{S}_{\eps} \subset U$ and normal coordinates at $x_0$ exist near $\overline{S}_{\eps}$. If $x \in U \setminus S_{\eps}$ and if $\gamma: [0,T] \to U$ is a geodesic joining $x_0$ and $x$, the argument in the previous paragraph shows that $f(\gamma(t)) \geq a + c \eps^2$ for $t \in [\eps,T]$. This shows that 
\[
U = \overline{S}_{\eps} \cup \{ x \in U \,;\, f(x) \geq a + c \eps^2 \}.
\]
Thus $U$ is compact as the union of two compact sets. Since $U$ is also open, one must have $U = M$ by connectedness. The remaining conclusions follow from Lemma \ref{lemma_convex_anydimension}.

(2) Now assume that $f$ does not attain its infimum in $U$. Note first that if $x_0 \in U$ were a critical point, then the argument in (1) (with $h'(0) = 0$) would show that $U = M$, and $f$ would necessarily attain its minimum. Thus $f$ has no critical points.

Let $a = \inf f$ and $b = \sup f$, where $a < b$ since otherwise $f$ could not be strictly convex. If $x_j \in U$ satisfy $f(x_j) \to b$, then $\{ x_j \}_{j \geq j_0}$ is contained in the compact set $\{ x \in U \,;\, f(x) \geq c \}$ for some $j_0$ and some $c > a$, hence $(x_j)$ has a subsequence converging to some $x_0 \in U$. Then $f(x_0) = b$, which shows that $b < \infty$. By strict convexity it is not possible for $f$ to attain its maximum in $M^{\mathrm{int}}$, so $f$ can only reach (and does reach) its maximum over $U$ on $\partial M$. By connectedness $f(U)$ is an interval, which must be of the form $(a,b]$ since $f$ does not attain its infimum. Finally, if $x \in \partial U \setminus \partial M$, if $x_j \in U$ satisfy $x_j \to x$, and if $f(x_j)$ does not converge to $a$, then there is $c > a$ so that (after taking a subsequence) $f(x_j) \geq c$ for all $j$. Thus $\{ x_j \}_{j=1}^{\infty}$ is contained in the compact set $\{ x \in U \,;\, f(x) \geq c \}$ which implies that $x \in U$, contradicting the fact that $x \in \partial U \setminus \partial M$.
\end{proof}

\begin{proof}[Proof of Lemma \ref{lemma_foliation_general}]
Given the function $\rho$, we define 
\[
f: U \to \mR, \ \ f(x) = h(\rho(x))
\]
where $h: (a,b] \to \mR$ is a smooth function that will be determined later. Then 
\[
\mathrm{Hess}(f)(w,w) = h'(\rho) \mathrm{Hess}(\rho)(w,w) + h''(\rho) \langle w, \nabla \rho \rangle^2.
\]
Thus, writing $w = v + s N$ where $N = \nabla \rho/\lvert \nabla \rho \rvert$ and $\langle v, N \rangle = 0$, we obtain 
\begin{multline*}
\mathrm{Hess}(f)(w,w) \\
 = h'(\rho) \left[ \mathrm{Hess}(\rho)(v,v) + 2s \mathrm{Hess}(\rho)(v,N) + s^2  \left[ \frac{h''(\rho)}{h'(\rho)} {\lvert \nabla \rho \rvert}^2 + \mathrm{Hess}(\rho)(N, N) \right] \right].
\end{multline*}
The strict convexity of $\Sigma_t$ means that for $x \in \rho^{-1}((a,b])$, 
\[
\mathrm{Hess}(\rho)_x(v,v) > 0 \qquad \text{when } \langle v, N \rangle = 0 \text{ and } v \neq 0.
\]
Let $\lambda_1(x)$ be the smallest eigenvalue of $\mathrm{Hess}(\rho)_x$ restricted to $N(x)^{\perp}$. Since $\rho$ is smooth, $\lambda_1$ is Lipschitz continuous and positive in $\rho^{-1}((a,b])$. It follows that 
\begin{multline*}
\mathrm{Hess}(f)(w,w) \\
 \geq h'(\rho) \left[ \frac{\lambda_1}{2} \lvert v \rvert^2 + s^2  \left[ \frac{h''(\rho)}{h'(\rho)} {\lvert \nabla \rho \rvert}^2 + \mathrm{Hess}(\rho)(N, N) - \frac{2}{\lambda_1} \lvert  \mathrm{Hess}(\rho)(v/\lvert v \rvert, N) \rvert^2 \right] \right].
\end{multline*}
We want to choose $h$ so that $h' > 0$ and the $s^2$ term is positive. More precisely, define 
\[
c(t) = \inf_{x \in \Sigma_t, v \perp N(x)} \lvert \nabla \rho \rvert^{-2} \left[ \mathrm{Hess}(\rho)(N, N) - \frac{2}{\lambda_1} \lvert  \mathrm{Hess}(\rho)(v/\lvert v \rvert, N) \rvert^2 \right]
\]
and note that 
\[
\mathrm{Hess}(f)(w,w) \geq h'(\rho) \frac{\lambda_1}{2} \lvert v \rvert^2 + s^2 \lvert \nabla \rho \rvert^2 \left[ h''(\rho) + c(\rho) h'(\rho) \right].
\]
If $\tilde{c}$ is some smooth function with $\tilde{c} < c$ in $(a,b]$, we choose $h$ as a solution of $h''(t) + \tilde{c}(t) h'(t) = 0$ in $(a,b]$ with $h' > 0$. With this choice of $h$, the function $f$ is smooth and strictly convex in $U$ and $f^{-1}(s) = \Sigma_{h^{-1}(s)}$ for $s \in (h(a), h(b)]$. Let now $U_1$ be any component of $U$. If $c > a$ one has $\{ x \in U \,;\, f(x) \geq c \} = \{ x \in M \,;\, \rho(x) \geq h^{-1}(c) \}$ and the latter set is compact, hence also $\{ x \in U_1 \,;\, f(x) \geq c \}$ is compact and $f|_{U_1}$ is an exhaustion function in $U_1$.
\end{proof}

\section{Geometric setting for the inverse problem}
\label{sec_geometric}

This section provides the necessary geometric preparation for the analysis in Sections \ref{function} and \ref{one form}, see \cite{UV15, SUV13, SUV14} for more details.
 
We start with the defining function $\tilde x$ of the neighborhood introduced in the main theorems. Define the function for $\epsilon>0$ 
$$\tx(z)=-\rho(z)-\e |z-p|^2$$
near $p$, so $\tilde x(p)=0$, here $|\cdot|$ is the Euclidean norm. Observe that since we are dealing with a local problem, one can choose an initial local chart $(U,\varphi)$ near $p$ with $\varphi:U\subset \tM\to \mathbb R^n$, then $|z-p|:=|\varphi(z)-\varphi(p)|$ is well-defined near $p$.
  Then there is $\e_0>0$ such that for $\e\in (0,\e_0)$, 
$$\frac{d^2}{dt^2}(\tilde x\circ\gamma_{p,v})(0)\geq c_0/2$$ at $v\in S_p\widetilde M$ when $\frac{d}{dt}(\tx\circ\gamma_{p,v})(0)=0$. Hence for $c_0>0$ sufficiently small (corresponding to $\e_0$), by shrinking $U$ the function $\tx$ is defined on a neighborhood $U_0$ of $p$ with concave level sets (from the side of the super-level sets of $\tx$), such that for $0\leq c\leq c_0$,
$$O_c=\{\tx>-c\}\cap\{\rho\geq 0\}$$
has compact closure in $U_0\cap\widetilde M$ (Here $O_c$ is exactly the $O_p$ in Theorem \ref{local function}).

We remark that the actual boundary $\rho=0$ only plays a role at the end since ellipticity properties only hold in $U_0$ and the function $h$ we are transforming is supported in $\rho\geq 0$ to ensure localization. Thus for most of the following discussion we completely ignore the actual boundary.

For fixed $c\in [0,c_0]$ we work with $x=\tx+c$, which is the boundary defining function of the region $\{x\geq 0\}$. We complete $x$ to a coordinate system $(x,y)$ on a neighborhood $U_1\subset U_0$ of $p$; we are interested in those $\gamma=\gamma_{z,v}$ with unit tangent vector $v=k(\lambda\p_x+\omega\p_y)$, $k(x,y,\l,\o)>0$, $\omega\in\mathbb S^{n-2}$ and $\lambda$ relatively small.

Now, consider geodesics $\g=\g_{x,y,k\l,k\o}$ with $\g(0)=(x,y),\, \dot{\g}(0)=k(\l\p_x+\o\p_y)$ (where $\omega\in\mathbb S^{n-2}$ and $\lambda$ relatively small) that are parameterized by arclength, as $k$ is a smooth positive function of $(x,y,\lambda,\omega)$ in the region of interest, 
we can totally omit $k$ in the following arguments. From now on we use $(x,y,\l,\o)$, instead of $(x,y,k\l,k\o)$, to parameterize the geodesics $\g$ for sufficiently small $\l$. 

\begin{rem}
Indeed by removing $k$, we are doing a reparametrization of the curve $\g$ and it has (non-constant) speed $1/k$ under the new parameter (we denote the curve under the new parameter by $\tilde{\g}$). Consequently, the weighted geodesic X-ray transform $I_W$ is converted to a new weighted X-ray transform $I_{\tilde W}$ whose weight $\tilde W$ is invertible too. So our argument below is actually proving the invertibility of $I_{\tilde W}$. However, since $(I_Wh)(\g)=(I_{\tilde W}h)(\tilde{\g})$, this implies the invertibility of the original transform $I_W$. 
\end{rem}

Given $(x,y,\lambda,\omega)$ and $\gamma=\gamma_{x,y,\lambda,\omega}$, we define 
$$\kappa(x,y,\lambda,\omega,t):=\frac{1}{2}\frac{d^2}{dt^2}(x\circ\gamma)(t).$$
By the convexity assumption, we have 
$$\kappa(x,y,0,\omega,0)=\frac{1}{2}\frac{d^2}{dt^2}(x\circ\g)(0)>0.$$ 



The basic geometric feature we need is that for $x\geq 0$ and for $\l$ sufficiently small, depending on $x$, the curves $\gamma_{x,y,\l,\o}$ will stay in $O_p$ before exiting from $\p M$. In particular for $x=0$, only $\l=0$ is allowed.

Our inverse problem is recovering $h$ from weighted integrals (we also extend the weight $W$ to an invertible weight on $S\widetilde M$, which is still denoted by $W$)
$$(I_Wh)(\xy)=\int W(\gxy(t), \dot{\g}_{x,y,\l,\o}(t))h(\gxy(t),\dot{\g}_{x,y,\l,\o}(t)) \,dt.$$
Notice that we are interested in $h$ supported in the compact manifold $M$, so the above integral indeed is over a finite interval. However, we may assume the extended metric on $\widetilde M$ is complete, then the above integral is well-defined on $\mathbb R$.

\section{Local invertibility for functions}\label{function}

In this section we will prove Theorem \ref{local function}. The presentation will rely heavily on \cite{UV15}, but we will explain the main points and refer to specific parts of \cite{UV15} whenever needed. A general reference to the microlocal scattering calculus is \cite{Mel}, except that we will use the order convention in \cite[Section 2]{UV15}.

In the following, we will use the notation from Section \ref{sec_geometric}. For $f\in C^{\infty}(M;\mathbb C^N)$, and for a fixed number $F > 0$, we consider the the following operator for $x>0$
\begin{equation}\label{N_F function}
(N_Ff)(x,y)=x^{-2}e^{-F/x}\int_{\R}\int_{\S^{n-2}}W^*(x,y,\l,\o) (I_W e^{F/x}f)(\xy)\chi(\l/x)\, d\l d\o,
\end{equation}
where $W^*$ is the conjugate transpose matrix of $W$ and $\chi$ is a smooth, even, non-negative, compactly supported (for sufficiently small $x$) function on $\mathbb R$. Thus when $x$ is small, only $(I_We^{F/x}f)(x,y,\l,\o)$ with $\l$ sufficiently small (which is exactly the case we expect) will contribute to the operator. We can allow $\chi$ to depend smoothly on $y$ and $\o$; over compact sets such a behavior is uniform. If we show that $N_F$ is invertible as a map between proper spaces of functions supported near $x=0$, we obtain an estimate for $f$ in terms of $I_Wf$.

As mentioned in \cite[Section 3]{UV15}, the maps (notice that we may assume $\widetilde M$ to be complete)
$$\Gamma_+: S\tM\times [0,\infty)\rightarrow [\tM\times\tM; \mbox{diag}],\quad \Gamma_+(z,v,t)=(z, \gamma_{z,v}(t))$$
and
$$\Gamma_-: S\tM\times (-\infty,0]\rightarrow [\tM\times\tM; \mbox{diag}],\quad \Gamma_-(z,v,t)=(z, \gamma_{z,v}(t))$$
are diffeomorphisms near $S\tM\times \{0\}$. Here by denoting $z':=\gamma_{z,v}(t)$, $[\tM\times\tM; \mbox{diag}]$ is the {\it blow-up} of $\widetilde M$ along the diagonal $z=z'$, which essentially means introducing the polar coordinates around the diagonal, so that $\Gamma_\pm(z,v,0)\neq \Gamma_\pm(z,v',0)$ if $v\neq v'$. In particular, for $|t|$ sufficiently small, the local (polar) coordinates 
$$z,|z'-z|, \frac{z'-z}{|z'-z|}$$
are valid on the image of $\Gamma_\pm$, where $|\cdot|$ is the Euclidean norm.
 
Recall the local coordinates $(x,y)$ near the strictly convex boundary point $p$, we write $z=(x,y)$ and $z'=(x',y')$, then similar to \cite{UV15}, it is  convenient to use 
\begin{equation}\label{polar coordinates}
x,y,|y'-y|,\frac{x'-x}{|y'-y|},\frac{y'-y}{|y'-y|}
\end{equation}
 as the local coordinates on the image of $\Gamma_\pm$ for $|t|$ small,  
 when $|y'-y|$ is large relative to $|x'-x|$, i.e. in our region of interest.

On the other hand, our analysis is carried out on the region $\Omega=\{x\geq 0\}$, which can be viewed as a manifold with boundary $x=0$. The operator $N_F$, which is defined in the interior of $\Omega$ at this point, will act on functions that could be non-zero near the boundary $x=0$, 
thus the standard pseudodifferential calculus does not work. This is where the scattering calculus comes in.

$\Omega\times \Omega$ is a manifold with corner $\p\Omega\times\p\Omega$, in addition to blowing up the diagonal, one also blows up the corner. In particular, the blow-up of the intersection of the diagonal with (the interior of) the front face of the blown up corner gives the {\it scattering front face} (of Melrose's scattering double space).
Near the interior of the scattering front face, there are natural scattering coordinates (see also \cite[Section 2]{UV15}) 
$$x,y, X=\frac{x'-x}{x^2}, \, Y=\frac{y'-y}{x},$$
(so $x=0$ defines the scattering front face) under which \eqref{polar coordinates} becomes
\begin{equation}\label{scattering coordinate}
x,y,x|Y|,\frac{xX}{|Y|},\hat Y
\end{equation}
with $\hat Y=\frac{Y}{|Y|}$. 

Given a curve $\gamma=\gamma_{x,y,\lambda,\omega}$ with $(\gamma,\dot\gamma)=(x',y',\lambda',\omega')$, we have near $t=0$
\begin{equation}\label{asymptotic1}
\begin{split}
x'=x+\lambda t+\kappa t^2+\mathcal O(t^3),&\quad y'=y+\omega t+\mathcal O(t^2),\\
\lambda'=\lambda+2\kappa t+\mathcal O(t^2),&\quad \omega'=\omega+\mathcal O(t),
\end{split}
\end{equation}
recall that $\kappa=\kappa(x,y,\lambda,\omega,t)$ defined in Section 3 is proportional to the second derivative of $x'$ with respect to $t$. By \eqref{asymptotic1} and the diffeomorphisms $\Gamma_\pm$, we have near $t=0$,
\begin{equation*}
\begin{split}
 & \hspace{1in} t\circ \Gamma_\pm^{-1}=\pm |y'-y|+\mathcal O(|y'-y|^2),\\
\lambda\circ \Gamma_\pm^{-1}=\pm &\frac{(x'-x)-\kappa |y'-y|^2}{|y'-y|} +\mathcal O(|y'-y|^2),\quad \omega\circ \Gamma_\pm^{-1}=\pm \frac{y'-y}{|y'-y|}+\mathcal O(|y'-y|),\\
\lambda'\circ \Gamma^{-1}_\pm=\pm & \frac{(x'-x)+\kappa |y'-y|^2}{|y'-y|} +\mathcal O(|y'-y|^2),\quad \omega'\circ \Gamma_\pm^{-1}=\pm \frac{y'-y}{|y'-y|}+\mathcal O(|y'-y|).
\end{split}
\end{equation*}
Then applying \eqref{scattering coordinate}, 
\begin{equation}\label{asymptic2}
\begin{split}
& \quad t\circ \Gamma_\pm^{-1}=\pm x|Y|+\mathcal O(x^2),\\
\lambda\circ \Gamma_\pm^{-1}=\pm x&\frac{X-\kappa |Y|^2}{|Y|} +\mathcal O(x^2),\quad \omega\circ \Gamma_\pm^{-1}=\pm \hat Y+\mathcal O(x),\\
\lambda'\circ \Gamma_\pm^{-1}=\pm x&\frac{X+\kappa |Y|^2}{|Y|} +\mathcal O(x^2),\quad \omega'\circ \Gamma_\pm^{-1}=\pm \hat Y+\mathcal O(x).
\end{split}
\end{equation}
The coefficients in the remainder terms are all smooth under the coordinates \eqref{scattering coordinate}. Thus
\begin{equation}\label{coordinates change}
(\Gamma_\pm^{-1})^*dt\, d\lambda\, d\omega=x^2|Y|^{1-n}J_\pm(x,y,x|Y|, \frac{xX}{|Y|},\hat Y)\,dX dY
\end{equation}
with smooth positive density functions $J_\pm$, $J_\pm|_{x=0}=1$.



Now by \eqref{asymptic2} and \eqref{coordinates change}, we obtain the Schwartz kernel of the integral operator $N_F$
\begin{equation}\label{kernel on function}
\begin{split}
K_F(x,y,X,Y) =\sum_{\bullet=+, -}  & e^{-\frac{FX}{1+xX}}|Y|^{-n+1}\chi(\frac{\lambda\circ \Gamma^{-1}_\bullet}{x})W^*(x,y,\lambda\circ\Gamma^{-1}_\bullet,\omega\circ\Gamma^{-1}_\bullet)\\
& W(x+x^2X, y+xY,\lambda'\circ\Gamma^{-1}_\bullet, \omega'\circ\Gamma^{-1}_\bullet)J_\bullet (x,y,x|Y|,\frac{xX}{|Y|},\hat Y).
\end{split}
\end{equation}
Note that on the blow-up of the scattering diagonal (or the {\it lifted diagonal}), $\{X=0,Y=0\}$, in the region $|Y|>\epsilon |X|$, thus on the support of $\chi$,
\begin{equation*}
x,y,|Y|,\frac{X}{|Y|},\hat Y
\end{equation*}
are valid coordinates. In particular, $|Y|$ becomes the defining function of the front face of the lifted diagonal. 
Similar to the discussion after \cite[equation (3.16)]{UV15} one can check that $\chi$, $W$ and $W^*$ are smooth on the blow-up space, then due to the exponential weight, the Schwartz kernel $K_F$ is smooth in $(x,y)$ up to the scattering front face $x=0$ with values in functions Schwartz in $(X,Y)$ for $(X,Y)\neq 0$, conormal to the diagonal $(X,Y)=0$.
Thus $N_F$ is a scattering pseudodifferential operator of order $(-1,0)$ on $\Omega$, i.e. $N_F\in \Psi_{sc}^{-1,0}(\Omega)$, in the sense of Melrose's scattering calculus, see \cite[Section 2]{UV15} and \cite{Mel}.


Let $^{sc}T^*\Omega$ be the scattering cotangent bundle over $\Omega$ whose basis is $\{\frac{dx}{x^2}, \frac{dy}{x}\}$ with $\frac{dy}{x}$ a short notation for $(\frac{dy_1}{x},\cdots,\frac{dy_{n-1}}{x})$. We denote the dual space, the scattering tangent bundle, by $^{sc}T\Omega$ with the basis $\{x^2\p_x, x\p_y\}$. We are interested in the asymptotic behavior of the principal symbol of $N_F$ acting on $(z,\zeta)\in\, ^{sc}T^*\Omega$, $z=(x,y)$. We will show that $N_F$, as a scattering pseudodifferential operator, is (fully) elliptic in the sense that its principal symbol is positive definite at both the {\em fiber infinity} of $^{sc}T^*\Omega$, i.e. when $|\xi|$ is sufficiently large, and the {\em finite points} of $^{sc}T^*\Omega$, in particular when $x$ is sufficiently close to $0$, while $|\xi|$ is relatively small comparing with $x^{-1}$. 
Since the Schwartz kernel $K_F$ is smooth in $(x, y)$ down to $x=0$, it suffices to investigate the principal symbol at $x=0$. Once we show the full ellipticity at $x=0$, by smoothness on $x$, the same result holds in an open neighborhood of $\overline O=\Omega\cap M$ in $\Omega$ assuming that $c>0$ is small enough.


Notice that the two terms with respect to $\Gamma_+$ and $\Gamma_-$ in \eqref{kernel on function} have the same contribution to the Fourier transform of $K_F$, so we can ignore $\Gamma_-$. We have thus proved the following result.

\begin{lemma}\label{boundary symbol for function}
The Schwartz kernel of $N_F$ in the interior of the scattering front face $x=0$ equals 
\begin{equation}\label{boundary}
K_F^b(y,X,Y)= e^{-FX}|Y|^{-n+1}(W^*\,W)(0,y,0,\hat Y)\chi\left(\frac{X-\kappa(0,y,0,\hat Y)|Y|^2}{|Y|},y,\hat{Y}\right).
\end{equation}
\end{lemma}

From now on we denote $(X-\kappa|Y|^2)/|Y|$ by $S$ and $X/|Y|$ by $\tilde S$.

We denote the principal symbol of $N_F$ at fiber infinity by $\sigma_p(N_F)$ and the principal symbol at finite points by $\sigma_{sc}(N_F)$.

\begin{prop}\label{interior elliptic}
Suppose $\chi\in C^{\infty}_c(\R)$, $\chi\geq 0$ with $\chi>0$ near $0$, then $N_F$ is elliptic at fiber infinity of $^{sc}T^*\Omega$.
\end{prop}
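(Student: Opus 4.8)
The plan is to compute the standard principal symbol of $N_F$ at fiber infinity of ${}^{sc}T^*_{\partial\Omega}\Omega$ directly from the boundary Schwartz kernel \eqref{boundary}, and to show that it is an invertible matrix for every nonzero covector. Since the kernel $K_F$ is smooth in $(x,y)$ down to $x=0$ and the ellipticity we want is uniform near $\partial\Omega$, it suffices to work with $K_F^b$. The fiber-infinity symbol is insensitive to the exponential weight factor $e^{-FX}$ (that factor only rescales in the $X$ variable and is smooth), so after passing to the coordinates $(y,|Y|,\tilde S=X/|Y|,\hat Y)$ the relevant object is the oscillatory integral of $K_F^b$ against $e^{i(X\xi + Y\cdot\eta)}$, and one reads off the symbol as $|\zeta|\to\infty$ along a fixed direction. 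The key structural point is that the matrix factor $(W^*W)(0,y,0,\hat Y)$ is \emph{positive definite} for each fixed $(y,\hat Y)$ because $W$ is invertible; and the scalar factor $\chi\big(\frac{X-\alpha|Y|^2}{|Y|},y,\hat Y\big)=\chi(S,y,\hat Y)$ is nonnegative and strictly positive near $S=0$.

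First I would fix a point $y_0\in\partial\Omega$ and a nonzero scattering covector $\zeta_0$, and split $\zeta_0=(\xi_0,\eta_0)$ dual to $(X,Y)$. I would then carry out the model computation exactly as in \cite{UV15, SUV14}: change variables from $Y$ to $(|Y|,\hat Y)$, integrate the factor $e^{-FX}|Y|^{-n+1}\chi(S,y,\hat Y)$ over $X$ (equivalently over $S$, since $dX=|Y|\,dS$) and over $\hat Y\in\mathbb S^{n-2}$, and extract the leading homogeneous term in $\zeta$. Because the matrix factor depends only on $y$ and $\hat Y$, it pulls out of the $X$-integration, so the full symbol has the form
\[
\sigma(N_F)(y,\zeta) = \int_{\mathbb S^{n-2}} (W^*W)(0,y,0,\hat Y)\, m(y,\hat Y,\zeta)\, d\hat Y,
\]
where $m$ is the nonnegative scalar density coming from integrating $e^{-FX}\chi(S,\cdot)|Y|^{-n+1}$ against the oscillatory factor. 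The ellipticity then reduces to showing that $m(y,\hat Y,\zeta)$ does not vanish identically in $\hat Y$ for any fixed nonzero $\zeta$, and is nonnegative; given that, the integral of a positive-definite matrix against a nonnegative, not-identically-zero weight is positive definite, hence invertible.

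The main obstacle — the one nontrivial step — is the scalar ellipticity of $m$, i.e. verifying that for each fixed nonzero $\zeta$ the contribution does not degenerate as $|\zeta|\to\infty$. This is precisely the scalar computation underlying \cite[Lemma 3.3]{UV15} (or \cite[Prop.~4.2]{SUV14}): one shows that the leading symbol behaves, up to a positive constant, like $\chi$ evaluated near $S=0$ integrated over the sphere, and since $\chi>0$ near $0$ and the phase is genuinely oscillatory in the remaining variables only in a lower-dimensional set, the relevant integral is bounded below by a positive constant times $|\zeta|^{-1}$ uniformly in the direction $\hat\zeta$. I would invoke that scalar result verbatim and note that the matrix weight contributes only the positive-definite prefactor $(W^*W)(0,y,0,\hat Y)$, which does not affect nonvanishing: for any $0\neq w\in\mathbb C^N$,
\[
\langle \sigma(N_F)(y,\zeta) w, w\rangle = \int_{\mathbb S^{n-2}} \lvert W(0,y,0,\hat Y)w\rvert^2\, m(y,\hat Y,\zeta)\, d\hat Y > 0,
\]
since the integrand is nonnegative and strictly positive on an open set of $\hat Y$'s. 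Finally, I would note that uniformity in $y$ near $\partial\Omega$ follows from smoothness of the kernel in $(x,y)$ down to $x=0$ together with compactness, completing the proof.
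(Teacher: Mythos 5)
Your proposal is correct and follows essentially the same route as the paper: both pass to the boundary kernel $K_F^b$, drop the harmless factors ($e^{-FX}$ and the extra vanishing of $S-\tilde S$), reduce the fiber-infinity symbol to the scalar conormal computation of \cite{UV15}, which yields (up to positive factors) $|\zeta|^{-1}\int_{\zeta^{\perp}\cap(\mathbb R\times\mathbb S^{n-2})}(W^*W)(0,y,0,\hat Y)\chi(\tilde S)\,d\tilde S\,d\hat Y$, and then conclude positive definiteness from the invertibility of $W$ together with $\chi\geq 0$, $\chi>0$ near $0$ and $n\geq 3$ (which guarantees a point of the equatorial sphere where $\chi>0$). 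The only cosmetic difference is that you package the scalar part as a nonnegative density $m(y,\hat Y,\zeta)$ before testing against vectors, whereas the paper tests the equatorial-sphere integral directly; the substance is identical.
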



\begin{proof}
To study the principal symbol at fiber infinity, we need to evaluate the Fourier transform of the Schwartz kernel in $(X,Y)$ as $|\zeta|\to \infty$ with $\zeta=(\xi,\eta)$ the Fourier dual variables of $(X,Y)$, see also the proof of \cite[Proposition 3.3]{UV15}. In particular it is of conormal singularity at the diagonal $(X,Y)=0$. Changing the coordinates $(X,Y)\to (|Y|,\tilde S,\hat Y)$, in view of the compact support of $\chi$, the leading order behavior of the Fourier transform as $|\zeta|\to \infty$ is encoded in the integration of the restriction of the Schwartz kernel to the front face of the blow-up of the diagonal (by inserting a cutoff function in $|Y|$ concentrating around $0$ without affecting the principal symbol) along the orthogonal equatorial sphere corresponding to $\zeta$, i.e. those $(\tilde S,\hat Y)$ with $\xi\tilde S+\eta\cdot\hat Y=0$. Notice that the singular factor $|Y|^{-n+1}$ in the kernel is canceled after the coordinates change. Moreover, since the front face of the lifted diagonal is defined by $|Y|=0$, the extra vanishing factor $\kappa|Y|$ in $S=\tilde S-\kappa|Y|$ can be dropped at the front face.

The above analysis implies the principal symbol is essentially the following matrix
\begin{equation}\label{function standard symbol}
|\zeta|^{-1}\int_{\zeta^{\perp}\cap(\mathbb R\times \mathbb S^{n-2})}(W^*W)(0,y,0,\hat Y)\chi(\tilde S)\,d\tilde S d\hat Y.
\end{equation}
Since $W$ is invertible, $W^*\,W\,\chi$ is a positive semi-definite Hermitian matrix-valued function. 
In particular, under our assumption on $\chi$ and $n>2$ we can always find some $(\tilde S, \hat Y)$ orthogonal to $\zeta$ such that $\chi(\tilde S)>0$, i.e. there exists $(\tilde S, \hat Y)$ orthogonal to $\zeta$ such that for any non-zero $f=(f_1,\cdots,f_N)^T$,
$$\Big(\sigma_p(N_F)f,f\Big)=\int_{\zeta^{\perp}\cap(\mathbb R\times \mathbb S^{n-2})}\Big|W(0,y,0,\hat Y)f\Big|^2\chi(\tilde S)\,d\tilde S d\hat Y>0,$$
in other words the principal symbol $\sigma_p(N_F)$ is positive definite. Therefore the operator $N_F$ is elliptic at fiber infinity.
\end{proof}

Then we study the principal symbol of $N_F$ at finite points of $^{sc}T^*\Omega$. The exponential weights and properly chosen cut-off function $\chi$ can help us eliminate possible issues of the principal symbol near the zero section of $^{sc}T^*\Omega$. In particular, we show that 
the boundary principal symbol matrix is 
bounded below in absolute value by $c\<(\xi,\eta)\>^{-1}$, some $c>0$.  

\begin{prop}\label{boundary elliptic}
For $F>0$ there exists $\chi\in C^{\infty}_c(\R)$, $\chi\geq 0, \chi(0)=1$, such that $N_F$ is elliptic at finite points of $^{sc}T^*\Omega$.
\end{prop}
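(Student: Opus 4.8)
\textbf{Proof proposal for Proposition \ref{boundary elliptic}.}

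The plan is to compute the boundary principal symbol of $N_F$ explicitly by Fourier-transforming the kernel $K_F^b$ of \eqref{boundary} in the scattering variables $(X,Y)$, and then to show that, with a suitable choice of Gaussian-type cutoff $\chi$, the resulting Hermitian matrix-valued symbol is positive definite uniformly in $(\xi,\eta)$. First I would fix $y$ and write the boundary symbol as
\[
\sigma_{\partial}(N_F)(y,\xi,\eta) = \int_{\mathbb R}\int_{\mathbb S^{n-2}} e^{-i(\xi X + \eta\cdot Y)} e^{-FX} |Y|^{-n+1} (W^*W)(0,y,0,\hat Y)\, \chi\!\left(\frac{X-\alpha(0,y,0,\hat Y)|Y|^2}{|Y|}, y, \hat Y\right) dX\, d\hat Y\, \cdots
\]
more precisely, passing to the coordinates $(|Y|,\hat Y)$ and then to $S = (X-\alpha|Y|^2)/|Y|$ as the $X$-variable (so $X = |Y| S + \alpha |Y|^2$ and $dX = |Y|\, dS$), the $|Y|$-integral becomes a one-dimensional integral that one recognizes, as in \cite{UV15, SUV14}, as producing a function of the single scalar combination of $(\xi,\eta)$ dual to the "$|Y|$ direction along the plane $\zeta^\perp$"; the remaining integral is over the equatorial sphere $\zeta^\perp \cap (\mathbb R\times\mathbb S^{n-2})$. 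The key algebraic point is that the $e^{-FX}$ weight, after the substitution, contributes a factor $e^{-F(|Y|S + \alpha|Y|^2)}$, i.e.\ a genuine $|Y|^2$ Gaussian damping when $F>0$; this is exactly the mechanism (a shifted, damped Gaussian in the radial variable) that makes the resulting scalar profile nonvanishing for \emph{every} value of the dual variable, in contrast to the fiber-infinity computation \eqref{function standard symbol} where only the equatorial integral survives.

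Next I would carry out the reduction to a model scalar integral: after the substitution the boundary symbol takes the form
\[
\sigma_\partial(N_F)(y,\xi,\eta) = \int_{\zeta^\perp \cap(\mathbb R\times\mathbb S^{n-2})} (W^*W)(0,y,0,\hat Y)\, G_F(\,\text{linear forms in }(\xi,\eta);\, y,\hat Y\,)\, d\hat Y,
\]
where $G_F$ is obtained by integrating in the radial variable $r=|Y|$ (and the $S$-variable) the product of $\chi$, the oscillatory factor, and the Gaussian $e^{-F\alpha r^2}$. Choosing $\chi(S,y,\hat Y) = e^{-S^2/(2\beta)}$ (or a compactly supported mollification thereof, which as the excerpt notes does not affect the argument) reduces $G_F$ to an explicitly computable Gaussian integral in two variables; one gets $G_F > 0$ for all values of its arguments provided $F>0$ and $\alpha(0,y,0,\hat Y) > 0$, the latter being guaranteed by the convexity assumption. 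Then, exactly as in Proposition \ref{interior elliptic}, pairing $\sigma_\partial(N_F)(y,\xi,\eta)$ with a nonzero vector $f$ gives
\[
(\sigma_\partial(N_F)(y,\xi,\eta) f, f) = \int_{\zeta^\perp \cap(\mathbb R\times\mathbb S^{n-2})} |W(0,y,0,\hat Y) f|^2\, G_F(\cdots)\, d\hat Y > 0,
\]
using invertibility of $W$ and $G_F > 0$. Finally I would check the quantitative lower bound: since $G_F$ has a Gaussian tail in $(\xi,\eta)$, one does \emph{not} in general get a bound uniform down to $|(\xi,\eta)|\to\infty$ at fixed $x=0$ from this alone, but combined with Proposition \ref{interior elliptic} (ellipticity at fiber infinity) a compactness argument on the radially-compactified fiber ${}^{sc}T^*_{\partial\Omega}\Omega$ yields the uniform bound $|\sigma_\partial(N_F)| \geq c\langle(\xi,\eta)\rangle^{-1}$ claimed just before the statement; the uniformity in $y$ over compact sets follows from smoothness of the kernel down to $x=0$ and continuity of all the data $(W, \alpha)$.

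The main obstacle I anticipate is twofold: first, making the radial integral honest — one must verify that after the blow-up and the $(|Y|,S,\hat Y)$ change of variables the oscillatory integral genuinely reduces to the Gaussian model and that the formal manipulation of $e^{-FX}$ into $e^{-F\alpha|Y|^2}$ damping is valid (this requires care about the support of $\chi$, about the lower-order terms in $S = \tilde S - \alpha|Y|$, and about the fact that $\alpha$ depends on $\hat Y$ but is bounded below by convexity). Second, and more delicate, is the \emph{choice of $\chi$}: one needs a single $\chi$, independent of $(\xi,\eta)$, that simultaneously makes the Gaussian integral positive at all frequencies and keeps the cutoff supported where the geometric setup of Section \ref{sec_geometric} is valid (namely $\lambda/x$ small). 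The Gaussian $e^{-S^2/(2\beta)}$ is the natural candidate and the positivity it buys is robust, but verifying that its mollification to compact support does not destroy positivity — and tracking how small $\beta$ (equivalently, the support of $\chi$) must be relative to $F$ and the convexity constant $c_0$ — is where the real work lies, and is presumably "the most technically challenging aspect" alluded to in the introduction.
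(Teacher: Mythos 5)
Your overall strategy coincides with the paper's: take $\chi$ to be a Gaussian adapted to $F$ and $\alpha$, compute the $(X,Y)$-Fourier transform of $K_F^b$ explicitly, deduce positive definiteness from the invertibility of $W$ and the positivity of $\alpha$, and then pass to a compactly supported $\chi$ by approximating the Gaussian and its Fourier transform. However, two points need attention. First, your reduced formula for the boundary symbol is structurally off: at finite points of $^{sc}T^*_{\partial\Omega}\Omega$ the full Fourier transform of $K_F^b$ is an integral over the \emph{whole} sphere $\mathbb{S}^{n-2}$, namely (up to nonzero constants) $\langle\xi\rangle^{-1}\int_{\mathbb{S}^{n-2}}(W^*W)(0,y,0,\hat Y)\,e^{-|\eta\cdot\hat Y|^2/(2F^{-1}\alpha\langle\xi\rangle^2)}\,d\hat Y$ with $\langle\xi\rangle=(\xi^2+F^2)^{1/2}$; the restriction to the equatorial sphere $\zeta^{\perp}\cap(\mathbb R\times\mathbb{S}^{n-2})$ occurs only in the fiber-infinity computation of Proposition \ref{interior elliptic} (and your integral over that set in $d\hat Y$ alone is not even well formed, since that set consists of pairs $(\tilde S,\hat Y)$). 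This matters because all of the decay of the symbol in $(\xi,\eta)$ comes from the concentration of the Gaussian weight near the equator $\hat Y\perp\eta$, not from restricting the domain of integration.

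Second, and more seriously, the step you defer to ``a compactness argument on the radially compactified fiber combined with Proposition \ref{interior elliptic}'' is precisely where the real work lies, and as stated it is a gap. Ellipticity at finite boundary points requires the uniform bound $(\sigma_{sc}(N_F)f,f)\geq c\langle(\xi,\eta)\rangle^{-1}|f|^2$ including as $|(\xi,\eta)|\to\infty$ along the boundary face; to obtain this by compactness you would need to know that $\langle(\xi,\eta)\rangle\,\sigma_{sc}(N_F)$ extends continuously to fiber infinity with a positive limit matching the standard symbol, and that matching is exactly what must be proven rather than assumed. The paper does it by hand: it splits into the regimes $|\eta|/\langle\xi\rangle$ bounded and $|\eta|/\langle\xi\rangle$ large, and in the latter decomposes $\hat Y$ into components parallel and perpendicular to $\eta$, so that $\frac{|\eta|}{\langle\xi\rangle}e^{-c(\hat Y^{\parallel}|\eta|/\langle\xi\rangle)^2}\to\delta_0$ and the surviving factor is the volume of $\mathbb{S}^{n-3}$, which is positive exactly because $n\geq 3$. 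In your write-up the dimension hypothesis never enters the boundary-face analysis, so the claimed bound $c\langle(\xi,\eta)\rangle^{-1}$ is unsupported. By contrast, your worry about the size of the Gaussian variance is a non-issue: the paper fixes $\chi(s)=e^{-s^2/(2F^{-1}\alpha)}$, and the passage to compact support is a soft approximation argument (convergence of $\hat\chi_k$ to $\hat\chi$ in Schwartz space), since the lower bound is stable under small uniform perturbations.
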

\begin{proof}
In order to find a suitable $\chi$ to make $N_F$ elliptic at finite points, we make use of the strategy of \cite{UV15} and its appendix, namely we first do the calculation for $\chi(s)=e^{-s^2/(2F^{-1}\kappa)}$ with $F>0$ (here we need the positivity of $\kappa$), so $\hat\chi(\cdot)=c\sqrt{F^{-1}\kappa}e^{-F^{-1}\kappa|\cdot|^2/2}$ for appropriate $c>0$. 
The Fourier transform of $K_F^b$ in $X$, $\mathcal{F}_XK_F^b(y,\xi,Y)$, is a non-zero multiple of 
\begin{equation}\label{FX}
|Y|^{2-n}(W^*W)(0,y,0,\hat Y)(F^{-1}\kappa)^{\frac{1}{2}}e^{-F^{-1}(\xi^2+F^2)\kappa|Y|^2/2},
\end{equation}
where $\kappa=\kappa(0,y,0,\hat Y)$.

Now we use polar coordinates to compute the Fourier transform of \eqref{FX} in $Y$ up to some non-zero multiple. We denote $\frac{F^{-1}(\xi^2+F^2)}{2}$ by $b$, then
\begin{align*}
& \mathcal{F}_Y(\mathcal{F}_X K_F^b)(y,\xi,\eta)\\
\simeq & \int e^{-i\eta\cdot Y}|Y|^{2-n}(W^*W)(0,y,0,\hat Y)\kappa^{1/2}e^{-b\kappa|Y|^2}\, dY\\
= & \int_0^{+\infty}\int_{\S^{n-2}} e^{-i\eta\cdot\hat{Y}|Y|}|Y|^{2-n}(W^*W)(0,y,0,\hat Y)\kappa^{1/2}e^{-b\kappa|Y|^2}|Y|^{n-2}\,d|Y|d\hat Y\\
\simeq & \int_{\S^{n-2}} b^{-1/2}(W^*W)(0,y,0,\hat Y)e^{-|\eta\cdot\hat Y|^2/4b\kappa}\,d\hat Y,
\end{align*}
here $\simeq$ means equal up to some multiple.

We denote $(\xi^2+F^2)^{1/2}$ by $\<\xi\>$, then $\sigma_{sc}(N_F)(y,\xi,\eta)$ is a constant multiple of
$$\<\xi\>^{-1}\int_ {\S^{n-2}} (W^*W)(0,y,0,\hat Y)e^{-|\frac{\eta}{\<\xi\>}\cdot\hat Y|^2/2F^{-1}\kappa(0,y,0,\hat Y)}\,d\hat Y.$$
Under our assumption $\kappa(0,y,0,\hat Y)>0$, thus there exist positive $c_1, c_2$ that depend on $y$ and are locally uniform such that $0< c_1\leq \kappa\leq c_2$. We denote the smallest eigenvalue of $W^*W$ by $\sigma_{min}$, since $W^*W$ is a positive definite Hermitian matrix, similar bounds hold for $\sigma_{min}$. It is easy to see that the symbol is elliptic. Moreover, we study the joint $(\xi,\eta)$-behavior 
of $\sigma_{sc}(N_F)(y,\xi,\eta)$.  

When $\frac{|\eta|}{\<\xi\>}$ is bounded from above, then $\<\xi\>^{-1}$ is equivalent to $\<(\xi,\eta)\>^{-1}$ in this region in terms of decay rates, 
\begin{equation*}
\begin{split}
& \bigg((\int_{\S^{n-2}} W^*W(0,y,0,\hat Y)e^{-|\frac{\eta}{\<\xi\>}\cdot\hat Y|^2/2F^{-1}\kappa(0,y,0,\hat Y)}\,d\hat Y)f,\,f\bigg)\\
\geq & \int_{\S^{n-2}}\big(\sigma_{min}(0,y,0,\hat Y)|f|^2\big) e^{-c|\frac{\eta}{\<\xi\>}|^2} \,d\hat Y\\
\geq & \bigg(\int_{\S^{n-2}}C'e^{-C''} \,d\hat Y\bigg)|f|^2=C|f|^2.
\end{split}
\end{equation*}
Thus $(\sigma_{sc}(N_F)(y,\xi,\eta)f,f)\geq C\<\xi\>^{-1}|f|^2\simeq C\<(\xi,\eta)\>^{-1}|f|^2$. 

When $\frac{|\eta|}{\<\xi\>}$ is bounded from below, in which case $\<(\xi,\eta)\>^{-1}$ is equivalent to $|\eta|^{-1}$ in terms of decay rates, we write $\hat Y=(\hat Y^{\parallel}, \hat Y^{\perp})$ according to the orthogonal decomposition of $\hat{Y}$ relative to $\frac{\eta}{|\eta|}$, where $\hat Y^{\parallel}=\hat Y\cdot \frac{\eta}{|\eta|}$, and $d\hat Y$ is of the form $a(\hat Y^{\parallel})d\hat Y^{\parallel}d\theta$ with $\theta=\frac{\hat Y^{\perp}}{|\hat Y^{\perp}|}, a(0)=1$. Then
\begin{equation}
\begin{split}
& \bigg((\int_{\S^{n-2}} W^*W(0,y,0,\hat Y)e^{-|\frac{\eta}{\<\xi\>}\cdot\hat Y|^2/2F^{-1}\kappa(0,y,0,\hat Y)}\,d\hat Y)f,\,f\bigg)\\
\geq & \int_{\S^{n-2}} \big(\sigma_{min}(0,y,0,\hat Y)|f|^2\big)e^{-c|\frac{\eta}{\<\xi\>}\cdot\hat Y|^2}\,d\hat Y\\
 \geq & C'\bigg(\int_{\R}\int_{\S^{n-3}} e^{-c(\hat Y^{\parallel}\frac{|\eta|}{\<\xi\>})^2} a(\hat Y^{\parallel})\,d\theta d\hat Y^{\parallel}\bigg)|f|^2\\
= & C'\bigg(\frac{\<\xi\>}{|\eta|}\int_{\R}\left\{\frac{|\eta|}{\<\xi\>}e^{-c(\hat Y^{\parallel}\frac{|\eta|}{\<\xi\>})^2}\right\}a(\hat Y^{\parallel})\,d\hat Y^{\parallel}\int_{\S^{n-3}} \,d\theta\bigg)|f|^2.\label{Dirac}
\end{split}
\end{equation}
Since $\frac{|\eta|}{\<\xi\>}e^{-c(\hat Y^{\parallel}\frac{|\eta|}{\<\xi\>})^2}\rightarrow \delta_0$ in distributions as $\frac{|\eta|}{\<\xi\>}\rightarrow\infty$, the last term in \eqref{Dirac} is equal to $C'\frac{\<\xi\>}{|\eta|}\int_{\S^{n-3}} d\theta=2C\frac{\<\xi\>}{|\eta|}$ (for some $C>0$) modulo terms decaying faster as $\frac{|\eta|}{\<\xi\>}\rightarrow\infty$. In particular, there is $K>0$, such that 
$$C'\int_{\R}\left\{\frac{|\eta|}{\<\xi\>}e^{-c(\hat Y^{\parallel}\frac{|\eta|}{\<\xi\>})^2}\right\}a(\hat Y^{\parallel})\,d\hat Y^{\parallel}\int_{\S^{n-3}} \,d\theta \geq C$$
for $\frac{|\eta|}{\<\xi\>}\geq K$. (Note that the integral on $\S^{n-3}$ uses very strongly the assumption $n\geq 3$; when $n=3$, $d\theta$ is the point measure.) Thus $(\sigma_{sc}(N_F)(y,\xi,\eta)f,f)\geq C\frac{1}{\<\xi\>}\frac{\<\xi\>}{|\eta|}|f|^2=C|\eta|^{-1}|f|^2\simeq C\<(\xi,\eta)\>^{-1}|f|^2$.

Therefore we deduce that $\sigma_{sc}(N_F)(y,\xi,\eta)$ is bounded below by $c\<(\xi,\eta)\>^{-1}$ for some $c>0$, i.e. the ellipticity claim for the case that $\chi$ is a Gaussian. Now we pick a sequence ${\chi_k}\in C_c^{\infty}(\R)$ which converges to the Gaussian in Schwartz functions, then the Fourier transforms $\hat{\chi}_k$ converge to $\hat{\chi}$. One concludes that for some large enough $k$, if we use $\chi_k$ to define the operator $N_F$, then the Fourier transform of $K_F^b$, i.e. the boundary principal symbol, still has lower bounds $c'\<(\xi,\eta)\>^{-1}, \, c'>0$, as desired.
\end{proof}

The full ellipticity implies Fredholm properties of $N_F$ between appropriate Sobolev spaces $H^{s,r}_{sc}(\Omega)$ corresponding to the scattering structure, see \cite[Sections 2.1, 2.2]{UV15}. As discussed in \cite[Section 2.4]{UV15}, for our case, $N_F$ is fully elliptic in an open neighborhood $U$ of $\overline O$ in $\Omega$, then there is a local parametrix $P\in \Psi^{1,0}_{sc}(\Omega)$ of $N_F$ such that $PN_F=Id+R$ with $R\in \Psi^{0,0}_{sc}(\Omega)$, locally smoothing (i.e. in $\Psi^{-\infty,-\infty}_{sc}$) over some open subset $U'$ of $U$ with compact closure, containing $\overline O$.

\begin{proof}[Proof of Theorem \ref{local function}] By Proposition \ref{interior elliptic} and Proposition \ref{boundary elliptic}, we have that $N_F\in\Psi_{sc}^{-1,0}$ is elliptic both in the sense of the standard principal symbol (fiber infinity), and the scattering principal symbol (finite points). Then we can apply the argument of \cite[Section 3.7]{UV15}. Concretely, by the Fredholm properties of $N_F$, the space of elements of the kernel of $N_F$ supported in $\overline O$, $\mbox{Ker}\, N_F\cap \{f\in H^{s,r}_{sc}(\Omega): \supp f\subset \overline O\}$, is of finite dimension. The result is uniform in $c$ small, by the arguments in \cite[Section 2.5]{UV15}, the operator $PN_F=Id+R$ is indeed invertible on elements of the kernel of $N_F$ supported in $\overline O$ for sufficiently small $c$, i.e. this subspace of the kernel of $N_F$ is actually trivial, and further the following stability estimate holds on the space $\{f\in H^{s,r}_{sc}(\Omega): \supp f\subset \overline O\}$
\begin{equation*}
\|f\|_{H^{s,r}_{sc}(\Omega)}\leq C\|N_F f\|_{H^{s+1,r}_{sc}(\Omega)}.
\end{equation*}
Finally, when $\supp f$ is uniformly away from $x=0$, so $\supp f\cap \p\Omega=\emptyset$, the weighted Sobolev norms in the above estimate can be replaced by the usual Sobolev norm (i.e. $H^s(O)$), thus in view of \eqref{N_F function} (taking $x\geq c'>0$ for some $c'<c$, so due to the cutoff $\chi$ the integration in $\lambda$ in \eqref{N_F function} is on a finite interval)
\begin{equation*}
\|f\|_{H^{s}(O)}\leq C_1\|e^{-F/x} f\|_{H^s(O)}\leq C_2\|N_F e^{-F/x}f\|_{H^{s+1}(O)}\leq C_3\|I_W f\|_{H^{s+1}(\mathcal M_O)}.
\end{equation*}
This completes the proof of the local invertibility of $I_W$ on functions.
\end{proof}


\section{Local invertibility for the attenuated ray transform $I_{\mathcal A}$}\label{one form}

Now we consider the matrix weighted geodesic ray transform of combinations of 1-forms $\alpha$ and functions $f$ in the particular case that $W$ corresponds to the attenuation by a connection $A$ and a Higgs field $\Phi$
$$I_{\mathcal A}\begin{pmatrix} \alpha \\ f\end{pmatrix}(\gamma):=I_{\mathcal A}(\alpha+f)(\gamma)=\int W(\gamma,\dot{\gamma})(\alpha_{\gamma}(\dot{\gamma})+f(\gamma))\,dt,$$
where $\alpha(\dot{\g}(t))=\alpha_i(\g(t))\dot{\g}^i(t)$ in local coordinates. As we mentioned in the introduction,
$W$ is only continuous on $SO_p$ (smooth in $SO_p\backslash S(O_p\cap \p M)$), however this is easily fixed as follows.  By extending the metric and $(A,\Phi)$ to $\widetilde{M}$ (a neighbourhood of $M$) we can consider a new solution $\widetilde{W}$ to the transport equation \eqref{transport 1} in $\widetilde{M}$. Its restriction to $SM$ will be smooth. Since $I_{W}(f)=0$ if and only if $I_{\widetilde{W}}(f)=0$ in what follows we may just proceed as if $W$ was smooth.

As before, we denote $\Omega=\{x\geq 0\}$. Defining
$$Q:=\begin{pmatrix} Id_{N\times N} & 0\\ 0 & x^{-1}Id_{N\times N}\end{pmatrix},$$
we consider the following conjugated operator with $\chi$ the same as in Section \ref{function} and even
\begin{equation*}
\begin{split}
& N_F\begin{pmatrix} \alpha \\ f\end{pmatrix}(x,y)\\
=& Q^{-1}e^{-F/x}\int_{\R}\int_{\S^{n-2}}\begin{pmatrix} g_{sc}(\lambda\p_x+\omega\p_y)Id_{N\times N} \\ x^{-2} Id_{N\times N}\end{pmatrix} W^*\chi(\lambda/x) I_{\mathcal A} e^{F/x}Q\begin{pmatrix} \alpha \\ f\end{pmatrix} (x,y,\lambda,\omega)\, d\lambda d\omega\\
=& \begin{pmatrix} N_{11} & N_{10}\\ N_{01} & N_{00}\end{pmatrix} \begin{pmatrix} \alpha \\ f\end{pmatrix} (x,y),   
\end{split}
\end{equation*}
with
\begin{equation*}
\begin{split}
N_{11}\alpha(x,y)&=e^{-F/x}\int\int g_{sc}(\lambda\p_x+\omega\p_y)  W^*\chi(\lambda/x) (I_{\mathcal A} e^{F/x} \alpha)  (x,y,\lambda,\omega)\, d\lambda d\omega,\\
N_{10}f(x,y)&=e^{-F/x}\int\int g_{sc}(\lambda\p_x+\omega\p_y)  W^*\chi(\lambda/x) (I_{\mathcal A} e^{F/x} x^{-1}f)  (x,y,\lambda,\omega)\, d\lambda d\omega,\\
N_{01}\alpha (x,y)&=x^{-1}e^{-F/x}\int\int  W^*\chi(\lambda/x) (I_{\mathcal A} e^{F/x} \alpha)  (x,y,\lambda,\omega)\, d\lambda d\omega,\\
N_{00}f (x,y)&=x^{-1}e^{-F/x}\int\int  W^*\chi(\lambda/x) (I_{\mathcal A} e^{F/x}x^{-1} f) (x,y,\lambda,\omega)\, d\lambda d\omega.
\end{split}
\end{equation*}
Here $g_{sc}$ is the scattering metric which locally has the form $x^{-4}dx^2+x^{-2}h(x,y)$ where $h$ is a metric on the level sets of $x$. 

Similar to the analysis of \cite[Section 3]{SUV14} and the arguments for Lemma \ref{boundary symbol for function}, applying \eqref{asymptic2} one can get
\begin{lemma}\label{kernel}
The Schwartz kernel of $N_F$ in the interior of the scattering front face $x=0$ has the form 
\begin{equation*}
K_F^b=e^{-FX}|Y|^{-n+1}\chi(S)\begin{pmatrix} K_{11} & K_{10} \\ K_{01} & K_{00}\end{pmatrix}
\end{equation*}
with
\begin{equation*}
\begin{split}
K_{11} &=(S\frac{dx}{x^2}+\hat Y\frac{dy}{x})(W^*W)(0,y,0,\hat Y)((S+2\kappa|Y|)(x^2\p_x)+\hat Y(x\p_y)),\\
K_{10} &=(S\frac{dx}{x^2}+\hat Y\frac{dy}{x})(W^*W)(0,y,0,\hat Y),\\
K_{01} &=(W^*W)(0,y,0,\hat Y)((S+2\kappa|Y|)(x^2\p_x)+\hat Y(x\p_y)),\\
K_{00} &=(W^*W)(0,y,0,\hat Y),
\end{split}
\end{equation*}
where $S=\frac{X-\kappa|Y|^2}{|Y|}$ with $\kappa>0$.
\end{lemma}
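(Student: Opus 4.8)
The plan is to compute the Schwartz kernel of $N_F$ at $x=0$ by a direct passage to scattering coordinates, in exactly the spirit of the scalar computation leading to \eqref{boundary} and of \cite[Section 3]{SUV14}; the genuinely new features are only the $2\times2$ block structure produced by the $Q$-conjugation and the scattering metric $g_{sc}$ appearing in the left column. First I would write $N_F$ as the matrix $(N_{ij})_{i,j\in\{0,1\}}$ of integral operators and treat each $N_{ij}$ separately. For a fixed base point $z=(x,y)$ and ``input'' point $z'=(x',y')=\g_{x,y,\l,\o}(t)$, the diffeomorphism property of $\Gamma_\pm$ recalled in Section \ref{function} lets one pass to the variables $X=(x'-x)/x^2$, $Y=(y'-y)/x$, with $dt\,d\l\,d\o=x^2|Y|^{1-n}J\,dX\,dY$ and $J|_{x=0}=1$; this realizes each $N_{ij}$ as a scattering pseudodifferential operator and shows that $K_F$, viewed as a function of $(x,y,X,Y)$, is conormal to $(X,Y)=0$, smooth down to $x=0$ for $(X,Y)$ finite and nonzero, and (for $F>0$) exponentially decaying in $X$.

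Next I would pass to the front face and extract the leading term. The limits as $x\to0$ that matter are: $\l/x\to S=(X-\alpha|Y|^2)/|Y|$ and $\o\to\hat Y$ (so $\chi(\l/x)\to\chi(S)$ and $W(x,y,\l,\o)\to W(0,y,0,\hat Y)$); the weight $W(\g(t),\dot\gamma(t))$ along the (collapsing) geodesic also tends to $W(0,y,0,\hat Y)$, so the two occurrences of $W$ combine to $(W^*W)(0,y,0,\hat Y)$; and $e^{-F/x}e^{F/x(\g(t))}\to e^{-FX}$ because $1/x(\g(t))-1/x\to -X$. For the velocity-linear (one-form) slots one expands $x\circ\g(t)=x+\l t+\alpha t^2+O(t^3)$, with $\alpha\to\alpha(0,y,0,\hat Y)$, and uses $T=x|Y|+o(x)$: the starting velocity $\dot\gamma(0)$ then has rescaled scattering components $(S,\hat Y)$, while the $x$-component of the endpoint velocity $\dot\gamma(T)$ equals $\l+2\alpha T$, so $\dot\gamma(T)$ has rescaled components $(S+2\alpha|Y|,\hat Y)$. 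Hence the factor $g_{sc}(\l\p_x+\o\p_y)$ in the left column contributes, to leading order, the scattering covector $S\frac{dx}{x^2}+\hat Y\frac{dy}{x}$, while evaluating the input one-form on $\dot\gamma(t)$ at $z'$ contributes the scattering vector $(S+2\alpha|Y|)(x^2\p_x)+\hat Y(x\p_y)$; the $(2,2)$ block, in which no one-form appears, keeps only $(W^*W)(0,y,0,\hat Y)$, the common scalar prefactor $e^{-FX}|Y|^{-n+1}\chi(S)$ comes from the $|Y|^{1-n}$ in the Jacobian together with the above limits, and assembling the four blocks yields the stated $K_F^b$.

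The main obstacle is the bookkeeping of the powers of $x$ coming from three competing sources — the scattering metric $g_{sc}\sim x^{-4}\,dx^2+x^{-2}h$, the conjugating factors $Q^{\pm1}=\diag(\Id,x^{\mp1})$ together with the $\diag(\,\cdot\,,x^{-2})$ weight, and the relation $dx'\,dy'=x^{n+1}\,dX\,dY$ played against the $x^2|Y|^{1-n}$ of the Jacobian — and verifying that these cancel so that $N_F$ really lies in $\Psi_{sc}^{-1,0}(\Omega)$ with a nonvanishing front-face kernel. This is precisely the purpose of inserting $Q$: it absorbs the extra order of vanishing carried by the velocity-linear slot and keeps the $(0,1)$, $(1,0)$ and $(1,1)$ blocks from degenerating or blowing up. A secondary delicate point is that, although at $x=0$ the true unit tangents at both ends of $\g$ degenerate to the single direction $(0,\hat Y)$ — which is why both occurrences of $W$ are evaluated at $(0,y,0,\hat Y)$ — one must still retain the first-order-in-$x$ parts of $\alpha_{\g(t)}(\dot\gamma(t))$ and of $g_{sc}(\dot\gamma(0))$, governed by the rescaled slopes $S+2\alpha|Y|$ and $S$, while discarding all higher-order remainders; the evenness of $\chi$ is used in matching the parities of the $\Gamma_+$ and $\Gamma_-$ contributions in this slot. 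Once $K_F^b$ is identified, the remaining properties — smoothness in $(x,y)$ down to $x=0$, conormality, and the decay needed to place $N_F$ in Melrose's scattering calculus — follow exactly as in the scalar case and in \cite{UV15,SUV14}.
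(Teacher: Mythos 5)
Your proposal is correct and follows essentially the same route as the paper, which proves this lemma simply by invoking the computation of \cite[Section 3]{SUV14} (and the scalar case of Section \ref{function}, going back to \cite{UV15}): pass to the scattering coordinates $X,Y$, use $dt\,d\lambda\,d\omega=x^2|Y|^{1-n}J\,dX\,dY$, take the front-face limits $\lambda/x\to S$, $\omega\to\hat Y$, $e^{F(1/x'-1/x)}\to e^{-FX}$, and read off the one-form slots from the rescaled initial and final velocities $(S,\hat Y)$ and $(S+2\alpha|Y|,\hat Y)$, with $Q$ and $g_{sc}$ absorbing the powers of $x$. Your explicit bookkeeping of these $x$-powers and of the parity matching of the $\Gamma_\pm$ contributions (where evenness of $\chi$ enters) is exactly the content the paper leaves to the cited references, so there is no gap.
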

In the above lemma, we use $^{sc}T^*\Omega\times ^{sc}T\Omega$ for the endomorphism bundle over $^{sc}T^*\Omega$, thus each entry of $K_{11}$ (matrix-valued) is a smooth section of the endomorphism bundle over $^{sc}T^*\Omega$. 

Notice that each $N_{ij},\, i,j=0,1$ is a scattering pseudodifferential operator of order $(-1,0)$ under the coordinates $(x,y,\tilde S,\hat Y,|Y|)$, so is $N_F$. As mentioned in Section \ref{function}, the Schwartz kernel of $N_F$ is smooth in $(x,y)$ up to $x=0$. By choosing $c$ (notice that the definition of $\Omega$ depends on $c>0$) sufficiently small, we want to show that up to some proper gauge, the operator $N_F$ is fully elliptic in some neighborhood of $\overline O_p=\Omega\cap M=\{x\geq 0, \rho\geq 0\}$ in $\Omega$ with compact closure. 

We define
$$d_{\A}:=\begin{pmatrix} d+A\\ \Phi \end{pmatrix}.$$
The natural elements in the kernel of $I_{\A}$ are 
$$\{d_{\A}p:\,p\in C^{\infty}(M;\mathbb C^N),\,p|_{\p M}=0\}.$$
It is not difficult to see that $d_{\mathcal A}$ is elliptic with trivial kernel under the zero Dirichlet boundary condition. Consider $\delta_{\mathcal A}$ the adjoint of $d_{\mathcal A}$ under the scattering metric $g_{sc}$, so $\delta_{\mathcal A}=\begin{pmatrix}\delta+A^* & \Phi^*\end{pmatrix}$. We define the following Witten-type solenoidal gauge 
\begin{equation}\label{gauge}
e^{2F/x}\delta_{\mathcal A}Q^{-1}e^{-2F/x}Q^{-1}\begin{pmatrix} \alpha \\ f\end{pmatrix}=0.
\end{equation}
If we denote $e^{F/x}\delta_{\mathcal A}Q^{-1}e^{-F/x}$ by $\delta_{\mathcal A,F}$ and $Q^{-1}e^{-F/x}\begin{pmatrix} \alpha \\ f\end{pmatrix}$ by $\begin{pmatrix}\alpha \\ f\end{pmatrix}_F$, then \eqref{gauge} becomes
$$\delta_{\mathcal A,F}\begin{pmatrix}\alpha \\ f\end{pmatrix}_F=0.$$

Before moving to the proof of the ellipticity, we compute the principal symbols of the gauge term $\delta_{\mathcal A,F}$ and $d_{\mathcal A,F}$, here $d_{\mathcal A,F}=e^{-F/x}Q^{-1}d_{\mathcal A}e^{F/x}$. Let $\Diff^{1,0}_{sc}$ denote the space of scattering differential operators of order $(1,0)$, which has a local basis $\{x^2\p_x, x\p_y\}$. 

\begin{lemma}\label{gauge symbol}
The operator $d_{\mathcal A,F}\in \Diff^{1,0}_{sc}(\Omega; \Hom (\mathbb C^N,(^{sc}T^*\Omega)^N\otimes \mathbb C^N))$ has principal symbol
\begin{equation*}
\begin{pmatrix}
\xi+iF \\
\eta\otimes \\
0
\end{pmatrix};
\end{equation*}
The operator $\delta_{\mathcal A,F}\in \Diff^{1,0}_{sc}(\Omega; \Hom ((^{sc}T^*\Omega)^N\otimes \mathbb C^N, \mathbb C^N))$ has principal symbol
\begin{equation*}
\begin{pmatrix}
\xi-iF & \iota_{\eta} & 0
\end{pmatrix}.
\end{equation*}
\end{lemma}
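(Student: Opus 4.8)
The plan is to compute the principal symbols of $d_{\mathcal A,F}$ and $\delta_{\mathcal A,F}$ directly from their definitions, using the fact that in the scattering calculus the relevant model operators are $x^2\partial_x$ and $x\partial_y$, whose scattering principal symbols (acting on the scattering cotangent bundle with basis $\frac{dx}{x^2},\frac{dy}{x}$) are $i\xi$ and $i\eta$ respectively. First I would observe that the zeroth order terms, namely the connection part $A$ and the Higgs field $\Phi$, are bundle endomorphisms (order $(0,0)$ in $\Psi_{sc}$), hence do not contribute to the principal symbol at order $(1,0)$; so up to lower order $d_{\mathcal A}$ behaves like $\begin{pmatrix} d \\ 0\end{pmatrix}$ mapping $f \mapsto (df, 0)$, and $\delta_{\mathcal A}$ like $\begin{pmatrix}\delta & 0\end{pmatrix}$.

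Next I would track the two conjugations. The conjugation by the exponential weight $e^{F/x}$ transforms a derivative $x^2\partial_x$ into $x^2\partial_x + x^2\partial_x(F/x) = x^2\partial_x - F$ when acting after stripping the weight in the combination $e^{-F/x}(\cdot)e^{F/x}$, so the scattering symbol $i\xi$ is shifted to $i\xi - F = i(\xi + iF)$; derivatives in $y$, i.e. $x\partial_y$, are unaffected since $F$ is independent of $y$ (or its $y$-dependence is lower order). The conjugation by $Q = \mathrm{diag}(\mathrm{Id}, x^{-1}\mathrm{Id})$ only rescales the function component by powers of $x$, which again is a zeroth-order (order $(0,\ast)$) operation in the scattering calculus and does not affect the $(1,0)$ principal symbol, although one must check it does not shift the weight index — it does not, since $Q$ and $Q^{-1}$ cancel at the symbol level for the top order piece. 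Assembling these, $d_{\mathcal A,F} = e^{-F/x}Q^{-1}d_{\mathcal A}e^{F/x}$ has principal symbol obtained from $\begin{pmatrix} d \\ 0 \end{pmatrix}$ by replacing $dx$-differentiation by $\xi + iF$ and $dy$-differentiation by $\eta$, giving the column $(\xi+iF,\ \eta\otimes,\ 0)^T$ where the first entry is the $\frac{dx}{x^2}$-component, the second (the $\eta\otimes$ notation) the $\frac{dy}{x}$-component, and the last the Higgs slot. For $\delta_{\mathcal A,F} = e^{F/x}\delta_{\mathcal A}Q^{-1}e^{-F/x}$ the weight is conjugated with the opposite sign, producing $\xi - iF$, and the divergence $\delta$ contributes the interior-product symbol $\iota_\eta$ on the one-form slot and $0$ on the Higgs slot, yielding the row $(\xi - iF,\ \iota_\eta,\ 0)$. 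A consistency check: these two symbols should be formal adjoints of each other with respect to the scattering metric pairing up to the sign flip in $F$, which is exactly the pattern $\xi + iF$ versus $\xi - iF$, confirming the computation.

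The main obstacle I expect is bookkeeping rather than conceptual: correctly identifying which powers of $x$ appear where once $Q$, $Q^{-1}$, $g_{sc}$ (with its $x^{-4}dx^2 + x^{-2}h$ structure) and the weight $e^{\pm F/x}$ are all interleaved, and verifying that each factor that is not a genuine derivative truly contributes at order $(0,0)$ and in particular does not shift the decay order so that the stated symbols live in $\Diff^{1,0}_{sc}$ and not in some weighted variant. One must also be careful that the adjoint $\delta_{\mathcal A}$ is taken with respect to $g_{sc}$ (not the original metric $g$), since that is what makes the symbol come out as the clean $\iota_\eta$ paired against the scattering coframe $\frac{dy}{x}$; this is where the choice of scattering metric in the definition of the gauge \eqref{gauge} is essential. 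Once these normalizations are pinned down, the symbol computation is a short calculation in local coordinates.
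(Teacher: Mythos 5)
Your overall route (direct computation in the scattering frame for $d_{\mathcal A,F}$, then taking the $g_{sc}$-adjoint for $\delta_{\mathcal A,F}$) is the same as the paper's, but the step that carries the actual content is justified incorrectly. You discard $A$ and $\Phi$ on the grounds that they are ``order $(0,0)$'' bundle endomorphisms and hence cannot contribute to the principal symbol. In the scattering calculus this is not a valid reason: the principal symbol relevant here includes the boundary symbol at finite points of ${}^{sc}T^*_{\partial\Omega}\Omega$, which retains zeroth-order terms that do not vanish at $x=0$ --- indeed the term $iF$ in the stated symbol is exactly such a term, and your own argument keeps it. Applied consistently, your reasoning would either delete $iF$ as well or leave $\Phi(0,y)$ (and the $A$-components) sitting in the boundary symbol, contradicting the lemma. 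The correct reason these terms drop out, and the heart of the paper's short proof, is that they all acquire an explicit factor of $x$: writing $A = x^2A_x\,\frac{dx}{x^2} + xA_y\,\frac{dy}{x}$ in the scattering coframe and applying $Q^{-1}=\mathrm{diag}(\mathrm{Id},x\,\mathrm{Id})$ to the output gives
\[
e^{-F/x}Q^{-1}\begin{pmatrix} D+A\\ \Phi\end{pmatrix}e^{F/x}
=\begin{pmatrix} x^2D_x+iF\\ xD_y\\ 0\end{pmatrix}
+x\begin{pmatrix} xA_x\\ A_y\\ \Phi\end{pmatrix},
\]
so the attenuation terms are $x\cdot(\text{smooth})$, i.e.\ of order $(0,-1)$, vanishing at the front face and hence absent from both the fiber-infinity and the finite-point symbols.

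Relatedly, your treatment of $Q$ is off: there is no conjugation $Q\cdots Q^{-1}$ to cancel --- each of $d_{\mathcal A,F}=e^{-F/x}Q^{-1}d_{\mathcal A}e^{F/x}$ and $\delta_{\mathcal A,F}=e^{F/x}\delta_{\mathcal A}Q^{-1}e^{-F/x}$ contains only $Q^{-1}$, and the factor $x$ it contributes to the Higgs slot is precisely what produces the $0$ in the third entry of the boundary symbol; without it that entry would be $\Phi(0,y)$. Your closing paragraph even sets the verification goal backwards (checking that the non-derivative factors ``truly contribute at order $(0,0)$ and do not shift the decay order''), whereas what must be checked is that they do shift to a better decay order. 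The gap is easily repaired by carrying out the one-line decomposition above, but as written your plan would not deliver the stated symbol.
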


\begin{proof}
Notice that under the scattering cotangent basis $\{\frac{dx}{x^2},\,\frac{dy}{x}\}$, we have $d=(x^2\p_x,\,x\p_y)$. Consider $D:=(x^2(-i\p_x),\,x(-i\p_y))=(x^2D_x,\,x D_y)$ and $A=x^2A_x\frac{dx}{x^2}+xA_y\frac{dy}{x}$, so
\begin{equation*}
\begin{split}
e^{-F/x}Q^{-1}\begin{pmatrix} D+A \\ \Phi\end{pmatrix}e^{F/x}& =\begin{pmatrix} x^2D_x+iF+x^2A_x\\ xD_y+xA_y \\ x\Phi \end{pmatrix}\\
& =\begin{pmatrix} x^2D_x+iF \\ xD_y \\ 0\end{pmatrix}+x\begin{pmatrix} xA_x \\ A_y \\ \Phi\end{pmatrix}.
\end{split} 
\end{equation*}
Thus $d_{\mathcal A,F}$ is a scattering differential operator of order $(1,0)$ with principal symbol
$$\begin{pmatrix} \xi+iF \\ \eta\otimes \\ 0\end{pmatrix}.$$
Now the principal symbol of $\delta_{\mathcal A,F}$ is given by the adjoint of the one of $d_{\mathcal A,F}$ under the inner product of $g_{sc}$, i.e. $\begin{pmatrix} \xi-iF & \iota_{\eta} & 0\end{pmatrix}$, which is of order $(1,0)$ too.
\end{proof}

We analyze the ellipticity of $N_F$ at finite points and infinity of $^{sc}T^*\Omega$ when restricted to the kernel of the principal symbol of $\delta_{\mathcal A,F}$.

\begin{prop}\label{fiber infinity}
$N_F$ is elliptic at fiber infinity of $^{sc}T^*\Omega$ when restricted to the kernel of the principal symbol of $\delta_{\mathcal A,F}$.
\end{prop}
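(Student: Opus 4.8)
The plan is to compute the standard principal symbol of $N_F$ at fiber infinity of ${}^{sc}T^*\Omega$ over the boundary $\{x=0\}$, using Lemma \ref{kernel}, and then to show that the resulting Hermitian matrix is positive definite on the kernel of the principal symbol of $\delta_{\mathcal A,F}$ computed in Lemma \ref{gauge symbol}. As in the proof of Proposition \ref{interior elliptic}, the full symbol at fiber infinity is obtained by taking the $(X,Y)$-Fourier transform of the boundary kernel $K_F^b$ and extracting the leading term as $|\zeta|=|(\xi,\eta)|\to\infty$; in the coordinates $(|Y|,\tilde S,\hat Y)$ this amounts to integrating $K_F^b$ (with the singular factor $|Y|^{-n+1}$ removed and the subleading $\alpha|Y|$ corrections dropped, so that $S$ becomes $\tilde S$) over the equatorial sphere $\zeta^\perp\cap(\mathbb R\times\mathbb S^{n-2})$. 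Writing $\hat\zeta=(\hat\xi,\hat\eta)$ for the unit direction, the block structure of $K_F^b$ shows that the symbol is, up to the scalar $|\zeta|^{-1}\int\chi(\tilde S)\,d\tilde S\,d\hat Y$-type weight, the outer product
\[
\sigma_p(N_F)(\hat\zeta) \;=\; \int_{\zeta^\perp\cap(\mathbb R\times\mathbb S^{n-2})} \begin{pmatrix} \tilde S\,\hat\xi + \hat Y\cdot\hat\eta \\ 1 \end{pmatrix}\!\otimes\! \overline{\begin{pmatrix} \tilde S\,\hat\xi + \hat Y\cdot\hat\eta \\ 1 \end{pmatrix}} \;(W^*W)(0,y,0,\hat Y)\,\chi(\tilde S)\,d\tilde S\,d\hat Y,
\]
i.e. a positive semi-definite matrix built from the rank-one projections onto the vectors $m(\tilde S,\hat Y)=((\tilde S\hat\xi+\hat Y\cdot\hat\eta)\,\mathrm{Id}_N,\ \mathrm{Id}_N)^T$ weighted by the positive-definite matrix $W^*W$ and the non-negative $\chi$.

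Next I would identify the kernel of the principal symbol of $\delta_{\mathcal A,F}$ at fiber infinity: from Lemma \ref{gauge symbol} this principal symbol is $(\hat\xi\,\mathrm{Id}_N\ \ \iota_{\hat\eta}\ \ 0)$ (the $F$-term drops out at fiber infinity), so a pair $(a,b)\in{}^{sc}T^*_x\Omega\otimes\mathbb C^N\oplus\mathbb C^N$ in the form-plus-function decomposition $a = a_\xi\,\frac{dx}{x^2} + a_\eta\cdot\frac{dy}{x}$ lies in the kernel precisely when $\hat\xi\,a_\xi + \hat\eta\cdot a_\eta = 0$. The claim to prove is that $(\sigma_p(N_F)(a\oplus b),\,(a\oplus b))>0$ for every nonzero such $(a,b)$. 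Expanding, this pairing equals
\[
\int_{\zeta^\perp\cap(\mathbb R\times\mathbb S^{n-2})} \Big| W(0,y,0,\hat Y)\big[(\tilde S\,\hat\xi+\hat Y\cdot\hat\eta)\,a_\zeta + b\big]\Big|^2 \,\chi(\tilde S)\,d\tilde S\,d\hat Y,
\]
where I have written $a_\zeta$ for the component of $a$ "in the $\zeta$-direction" — the key point being that on the equatorial sphere $\hat\xi\tilde S + \hat\eta\cdot\hat Y$ is the natural coordinate dual to $\zeta$, and the gauge condition forces the component of $a$ along $\zeta$ to vanish. Because $W$ is invertible, this integrand is $\geq 0$, and it vanishes only if $(\tilde S\,\hat\xi+\hat Y\cdot\hat\eta)\,a_\zeta + b = 0$ for a.e. $(\tilde S,\hat Y)$ on the support of $\chi$ over the equatorial sphere; since $\chi>0$ near $0$ and $n\geq 3$, this set contains points with distinct values of the scalar $\tilde S\,\hat\xi+\hat Y\cdot\hat\eta$, which forces both $a_\zeta=0$ and $b=0$, and then the gauge condition together with $a_\zeta=0$ forces the remaining components of $a$ to vanish as well. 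Hence the pairing is strictly positive and $N_F$ is elliptic at fiber infinity on the kernel of $\sigma_p(\delta_{\mathcal A,F})$.

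The main obstacle I anticipate is bookkeeping the interaction between the one-form/function block structure and the gauge condition: one must check that the vector appearing inside $W$ in the integrand really does collapse, under the constraint $\hat\xi a_\xi + \hat\eta\cdot a_\eta=0$, to an expression of the form "(scalar depending on the equatorial variable)$\,\times\,a_\zeta + b$" with $a_\zeta$ the genuinely free part of $a$, rather than something that could vanish for a nonzero $(a,b)$ because of a conspiracy between the matrix weight and the geometry. Concretely one decomposes $a_\eta = a_\eta^{\parallel}+a_\eta^{\perp}$ relative to $\hat\eta$, uses the constraint to eliminate $a_\xi$ (assuming $\hat\xi\neq 0$; the case $\hat\xi=0$ is handled by a parallel and slightly simpler computation on the sphere), and observes that the $\hat Y\cdot\hat\eta$ term ranges over a nondegenerate interval as $\hat Y$ varies over the equatorial sphere — this is exactly where $n\geq 3$ enters, guaranteeing the equatorial sphere is positive-dimensional. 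The positivity of $W^*W$ then does the rest, exactly paralleling the argument in Proposition \ref{interior elliptic} but now carrying the extra rank-one structure coming from the derivative factors in $K_{11}$, $K_{01}$, $K_{10}$.
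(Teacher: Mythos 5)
There is a genuine gap: you have misidentified the principal symbol of $N_F$ at fiber infinity. In the Schwartz kernel of Lemma \ref{kernel} the covector factor $S\frac{dx}{x^2}+\hat Y\frac{dy}{x}$ and the vector factor $(S+2\alpha|Y|)(x^2\p_x)+\hat Y(x\p_y)$ are \emph{not} contracted with the frequency variable; after taking the $(X,Y)$-Fourier transform and extracting the leading term as $|\zeta|\to\infty$ they survive as the covector/vector structure of an operator acting on one-forms plus functions, evaluated at the integration variables $(\tilde S,\hat Y)$. The correct symbol is (up to constants)
\[
|\zeta|^{-1}\int_{\zeta^{\perp}\cap(\mathbb R\times\mathbb S^{n-2})}\chi(\tilde S)\begin{pmatrix}\tilde S\\ \hat Y\\ \mathrm{Id}\end{pmatrix}(W^*W)(0,y,0,\hat Y)\begin{pmatrix}\tilde S & \langle\hat Y,\cdot\rangle & \mathrm{Id}\end{pmatrix}\,d\tilde S\,d\hat Y,
\]
an $(n+1)N\times(n+1)N$ matrix, so the relevant quadratic form on $(v^0,v',f)$ is $\int\chi(\tilde S)\,|W(\tilde S v^0+\hat Y\cdot v'+f)|^2\,d\tilde S\,d\hat Y$. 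Your formula instead contracts the one-form slots with $\hat\zeta$, producing the scalar $\tilde S\hat\xi+\hat Y\cdot\hat\eta$; but on the region of integration $\zeta^{\perp}$ this scalar is identically zero, so your symbol would have vanishing one-form block (hence could not be elliptic), and your later claim that it takes ``distinct values'' on the equatorial sphere is internally inconsistent.

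The final deduction fails for the same reason: in your notation the gauge condition $\hat\xi a_\xi+\hat\eta\cdot a_\eta=0$ \emph{is} the statement $a_\zeta=0$, so ``the gauge condition together with $a_\zeta=0$'' gives no control on the $(n-1)$-dimensional part of $a$ transverse to $\zeta$, which is exactly what must be shown to be detected. The correct argument (the paper's, following \cite[Lemma 3.4]{SUV14}) is: if the quadratic form vanishes, then $\tilde S v^0+\hat Y\cdot v'+f=0$ wherever $\chi(\tilde S)>0$ on the equatorial sphere; since $\chi$ is even and that sphere is invariant under $(\tilde S,\hat Y)\mapsto(-\tilde S,-\hat Y)$, this splits into $f=0$ and $\tilde S v^0+\hat Y\cdot v'=0$; and since for $n\geq 3$ the admissible directions $(\tilde S,\hat Y)$ with $\chi(\tilde S)>0$ span $\zeta^{\perp}$, each nonzero row $(v^0_j,v'_j)$ would have to be proportional to $\zeta$, which combined with the gauge condition $(v^0_j,v'_j)\perp\zeta$ forces $(v^0,v')=0$, a contradiction. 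Note in particular that the evenness of $\chi$ (built into the definition of $N_F$ in Section \ref{one form}) is what decouples $f$ from the one-form contribution; your sketch never uses it, and your proposed repair (eliminating $a_\xi$ and letting $\hat Y\cdot\hat\eta$ range over an interval) still does not produce the pairing $\tilde S v^0+\hat Y\cdot v'$ against directions spanning $\zeta^{\perp}$ on which the argument hinges.
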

\begin{proof}
This part is similar to the proof of \cite[Lemma 3.4]{SUV14} and Proposition \ref{interior elliptic}. 
By Lemma \ref{kernel} we get that the principal symbol is essentially the following matrix
\begin{equation}\label{one form standard symbol}
|\zeta|^{-1}\int_{\zeta^{\perp}\cap(\mathbb R\times \mathbb S^{n-2})}\chi(\tilde S)\begin{pmatrix}
\tilde S \\
\hat Y \\
Id
\end{pmatrix}(W^*W)(0,y,0,\hat Y)\begin{pmatrix}\tilde S & \<\hat Y,\cdot\> & Id \end{pmatrix}\,d\tilde S d\hat Y,
\end{equation}
with $\tilde S=X/|Y|$. 
We want to show that \eqref{one form standard symbol} is invertible (positive definite) acting on the kernel of the principal symbol of $\delta_{\mathcal A,F}$.

Now given a non-zero $(v,f)$, with $v=(v^0,v')$, in the kernel of the standard principal symbol of $\delta_{\mathcal A,F}$, i.e. $v^0_j\xi+v'_j\cdot\eta=0$ for $j=1,\cdots,N$, we need to show that 
\begin{equation*}
\begin{split}
&\Big(\sigma_p(N_F)(y,\xi,\eta)\begin{pmatrix} v^0 \\ v' \\ f\end{pmatrix},\,\begin{pmatrix} v^0 \\ v' \\ f\end{pmatrix}\Big)\\
=& |\zeta|^{-1}\int_{\zeta^{\perp}\cap(\mathbb R\times \mathbb S^{n-2})}\chi(\tilde S)\Big|W(0,y,0,\hat Y)(\tilde S v^0+\hat Y\cdot v'+f)\Big|^2\,d\tilde S d\hat Y>0.
\end{split}
\end{equation*}
Note that 
\begin{equation*}
(\tilde S v^0+\hat Y\cdot v'+f)=\begin{pmatrix}
\tilde Sv^0_1+\hat Y\cdot v'_1+f_1 \\
\vdots\\
\tilde Sv^0_N+\hat Y\cdot v'_N+f_N
\end{pmatrix}.
\end{equation*}
Since $W$ is invertible, it suffices to show that there exists $(\tilde S,\hat Y)$ with $\chi(\tilde S)>0$ and $\xi\tilde S+\eta\cdot\hat Y=0$, such that $\tilde S v^0+\hat Y\cdot v'+f\neq 0$. 

We argue by contradiction.  Assume that $\tilde S v^0+\hat Y\cdot v'+f=0$ for all such $(\tilde S, \hat Y)$. Since $\chi$ is even, we get $-\tilde S v^0-\hat Y\cdot v'+f=0$ too, which implies that $\tilde S v^0+\hat Y\cdot v'=0$ and $f=0$. On the other hand, since $f=0$, we must have $(v^0,v')$ non-zero and thus there exists some $1\leq j\leq N$ with $(v^0_j,v'_j)$ is non-zero. Since the dimension $n$ is at least $3$ and $(\xi,\eta)\neq 0$, we can find $n-1$ linearly independent vectors from the set $\{(\tilde S, \hat Y): \xi \tilde S+\eta\cdot \hat Y=0, \, \chi(\tilde S)>0\}$ such that $\tilde S v^0_j+\hat Y\cdot v'_j=0$ for these vectors. Notice that the above set might be empty if $n=2$. Now taking into account the fact $\xi v^0_j+\eta\cdot v'_j=0$, by linear algebra, this implies that $(v^0_j,v'_j)=0$, which is a contradiction.

Thus the principal symbol \eqref{one form standard symbol} is positive definite, i.e. $N_F$ is elliptic at fiber infinity when restricted on the kernel of the principal symbol of $\delta_{\mathcal A,F}$.
\end{proof}

\begin{prop}\label{finite points}
$N_F$ is elliptic at finite points of $^{sc}T^*\Omega$ when restricted to the kernel of the principal symbol of $\delta_{\mathcal A,F}$.
\end{prop}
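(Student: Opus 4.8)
The plan follows the strategy of Proposition \ref{boundary elliptic} and the analogous computation in \cite{SUV14}, now with the matrix weight $W$ and the block structure of Lemma \ref{kernel}. The boundary (scattering) principal symbol of $N_F$ at a finite point $(y,\xi,\eta)$ of ${}^{sc}T^{\ast}_{\partial\Omega}\Omega$ is the $(X,Y)$-Fourier transform of $K_F^b$, and by Lemma \ref{gauge symbol} the kernel of $\sigma(\delta_{\mathcal A,F})$ there is $G_{\xi,\eta}=\{(v^0,v',f):(\xi-iF)v^0+\eta\cdot v'=0\}$. We must show that $\sigma_{sc}(N_F)(y,\xi,\eta)$, restricted to $G_{\xi,\eta}$, is bounded below in absolute value by $c\langle(\xi,\eta)\rangle^{-1}$ for some $c>0$, uniformly for $y$ near $\partial\Omega$. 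As in Proposition \ref{boundary elliptic}, one first carries out the computation with the Gaussian $\chi(s)=e^{-s^2/(2F^{-1}\alpha)}$ (legitimate since $\alpha=\alpha(0,y,0,\hat Y)>0$), and only at the end replaces it by a compactly supported even $\chi\geq 0$ with $\chi(0)=1$ obtained by Schwartz-approximating the Gaussian, so that the boundary symbol changes by an arbitrarily small amount and the lower bound persists; this $\chi$ is then simultaneously admissible in Propositions \ref{interior elliptic}--\ref{fiber infinity}.

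First I would compute the symbol: Fourier transform $K_F^b$ in $X$ and then in $Y$ (in polar coordinates), exactly as in Proposition \ref{boundary elliptic} but carrying the polynomial prefactors $S$, $S+2\alpha|Y|$, $\hat Y$ of the blocks $K_{ij}$. Under $\mathcal{F}_X$ each such prefactor times $\chi(S)$ turns into $\mathcal{F}_X[\chi(S)]$ times an explicit polynomial in $(\xi,|Y|,\hat Y)$, with the same cancellations as in the scalar computation; the combination of the $e^{-FX}$ factor with these terms is what makes everything assemble cleanly (and the symmetry $K_F^b(z,z')=\overline{K_F^b(z',z)}$, under which $S\leftrightarrow-(S+2\alpha|Y|)$ and $\hat Y\leftrightarrow-\hat Y$, is what forces the result to be Hermitian). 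The outcome is that, up to a fixed positive scalar, the quadratic form of $\sigma_{sc}(N_F)(y,\xi,\eta)$ on $(v^0,v',f)$ equals
\[
\langle\xi\rangle^{-1}\int_{\mathbb S^{n-2}}\big\lvert W(0,y,0,\hat Y)\big(a(y,\xi,\eta,\hat Y)\,v^0+\hat Y\cdot v'+f\big)\big\rvert^2\,e^{-\lvert\frac{\eta}{\langle\xi\rangle}\cdot\hat Y\rvert^2/(2F^{-1}\alpha)}\,d\hat Y,
\]
where $a$ is a real scalar (this is where the evenness of $\chi$ enters: $\widehat{\chi}$ is a real, even Gaussian carrying no oscillatory factor) depending affinely on $\hat Y$, say $a=a_0(y,\xi,\eta)+\hat Y\cdot\vec p(y,\xi,\eta)$ with $a_0,\vec p$ real. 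The form is manifestly $\geq 0$.

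Next I would establish positive definiteness on $G_{\xi,\eta}$ together with the quantitative bound. If the form vanishes for some $(v^0,v',f)\in G_{\xi,\eta}$, then invertibility of $W$ forces $a\,v^0+\hat Y\cdot v'+f=0$ for a.e.\ $\hat Y\in\mathbb S^{n-2}$; since this is an affine function of $\hat Y$ vanishing on a sphere (which spans $\mathbb R^{n-1}$ and is symmetric under $\hat Y\mapsto-\hat Y$), both its linear and constant parts in $\hat Y$ vanish, i.e.\ $v'_j=-p_j v^0$ for all $j$ and $f=-a_0 v^0$. Substituting into the gauge relation gives $\big((\xi-\eta\cdot\vec p)-iF\big)v^0=0$, and since $\xi-\eta\cdot\vec p$ is real while $F\neq 0$ this scalar is nonzero, so $v^0=0$ and hence $v'=0$, $f=0$ — a contradiction. (In the scalar-function case of Proposition \ref{boundary elliptic} positivity was automatic; here it is precisely the solenoidal gauge together with $F>0$ that excludes nonzero symbols from $\ker\sigma(\delta_{\mathcal A,F})$.) The uniform lower bound $c\langle(\xi,\eta)\rangle^{-1}$ is then obtained exactly as in Proposition \ref{boundary elliptic}: when $\lvert\eta\rvert/\langle\xi\rangle$ is bounded, $\langle\xi\rangle^{-1}\simeq\langle(\xi,\eta)\rangle^{-1}$ and the integral is bounded below by compactness of $\mathbb S^{n-2}$ and continuity of $W,\alpha,a$; when $\lvert\eta\rvert/\langle\xi\rangle$ is large, one uses $\frac{\lvert\eta\rvert}{\langle\xi\rangle}e^{-c(\hat Y^{\parallel}\lvert\eta\rvert/\langle\xi\rangle)^2}\to\delta_0$ and the positivity of the limiting integral over the equatorial sphere $\mathbb S^{n-2}\cap\eta^{\perp}\cong\mathbb S^{n-3}$, which requires $n\geq 3$ and whose nondegeneracy follows from the same gauge argument restricted to $\eta^{\perp}$.

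The main obstacle is the second step: tracking the prefactors $S$ and $S+2\alpha|Y|$ through the two Fourier transforms and verifying that all cross-terms recombine into the clean Hermitian (indeed positive) form above with a single real coefficient $a$ — this is the heaviest bookkeeping of the section, though it is forced in principle by the self-adjointness symmetry of the kernel together with the cancellation already visible in the scalar computation of Proposition \ref{boundary elliptic}. A secondary, structural point is confirming that the combination $a\,v^0+\hat Y\cdot v'+f$ is genuinely incompatible with membership in $\ker\sigma(\delta_{\mathcal A,F})$ for nonzero arguments; this is exactly where the weight $F>0$ (which pushes $\xi$ off the real axis in the gauge symbol) and the hypothesis $n\geq 3$ are both indispensable, mirroring Propositions \ref{boundary elliptic} and \ref{fiber infinity}.
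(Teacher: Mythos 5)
Your overall skeleton coincides with the paper's proof: take $\chi$ Gaussian, compute the boundary symbol as a positively weighted integral over $\mathbb{S}^{n-2}$ of $\lvert W(0,y,0,\hat Y)(\,\cdot\,)\rvert^2$, prove positivity on the kernel of $\sigma(\delta_{\mathcal A,F})$ using evenness in $\hat Y$ together with the gauge relation and $F>0$, then get the $\langle(\xi,\eta)\rangle^{-1}$ bound as in Proposition \ref{boundary elliptic} and approximate the Gaussian by a compactly supported even $\chi$. However, there is a genuine error at the decisive point: the coefficient multiplying $v^0$ in the boundary symbol is \emph{not} a real affine function $a_0+\hat Y\cdot\vec p$. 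Carrying the prefactors $S$ and $S+2\alpha\lvert Y\rvert$ of Lemma \ref{kernel} through the $X$-Fourier transform against $e^{-FX}$ produces the complex factor $-\beta(\hat Y\cdot\eta)$ with $\beta=(\xi-iF)/(\xi^2+F^2)$ (and its conjugate on the left), so the coefficient is linear in $\hat Y$ with a genuinely complex vector coefficient; the $-iF$ comes from the exponential weight interacting with the $X$-dependence of $S$, and evenness of $\chi$ does not remove it. Your positivity argument leans explicitly on realness (the step asserting that $\xi-\eta\cdot\vec p$ is real, hence $(\xi-\eta\cdot\vec p)-iF\neq 0$), so as written it does not apply to the true symbol. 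The conclusion is still reachable, but by a different computation: vanishing for all $\hat Y$ forces $f=0$ and $v'_j=\beta\eta\, v^0_j$, and the gauge relation then gives $\bigl((\xi-iF)+\tfrac{\lvert\eta\rvert^2}{\xi+iF}\bigr)v^0_j=\tfrac{\xi^2+F^2+\lvert\eta\rvert^2}{\xi+iF}\,v^0_j=0$, hence $v^0_j=0$; this is exactly the step the paper delegates to the argument of \cite[Lemma 3.5]{SUV14}, and it uses $F>0$ but not any realness.

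A second gap is in your quantitative step for $\lvert\eta\rvert/\langle\xi\rangle$ large. You invoke the delta-limit onto the equatorial sphere $\eta^{\perp}\cap\mathbb{S}^{n-2}$ and claim the limiting integral is nondegenerate on the gauge kernel; it is not. On $\eta^{\perp}$ the $v^0$-coefficient $-\beta(\hat Y\cdot\eta)$ vanishes, and the gauge-kernel direction $v'=c\,\eta/\lvert\eta\rvert$, $f=0$, $v^0=-c\lvert\eta\rvert/(\xi-iF)$ (normalized to unit length) makes the limiting integrand identically zero. For such directions the bound $c\langle(\xi,\eta)\rangle^{-1}$ does hold, but it comes from the subleading Gaussian moment: the surviving term in the integrand is proportional to $\hat Y\cdot\eta$, and integrating $(\hat Y\cdot\eta)^2$ against the Gaussian of width $\sim\langle\xi\rangle/\lvert\eta\rvert$ produces exactly the order $\langle(\xi,\eta)\rangle^{-1}$, with nothing to spare. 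So the leading delta-limit alone cannot give the uniform lower bound, and the finer computation of \cite[Lemma 3.5]{SUV14} (which the paper cites) is needed. Both of these points are repairable, but as written your identification of the symbol and your uniform estimate in the large-$\lvert\eta\rvert$ regime do not go through.
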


\begin{proof}
This part is similar to the proof of \cite[Lemma 3.5]{SUV14} and Proposition \ref{boundary elliptic}. Similar to the case of functions, we will start with $\chi$ being a Gaussian function.

Let $\chi(s)=e^{-s^2/(2F^{-1}\kappa)}$, following the calculation in \cite{SUV14} and the function case above, we get the $(X,Y)$-Fourier transform of $K^b_F$, $\sigma_{sc}(N_F)$, is a non-zero multiple of
\begin{equation}\label{one form scattering symbol}
\begin{split}
(\xi^2& + F^2)^{-1/2}\times\\
&\int_{\mathbb S^{n-2}}\begin{pmatrix}
-\overline{\beta}(\hat Y\cdot \eta) \\
\hat Y\\
Id
\end{pmatrix}W^*W(0,y,0,\hat Y)
\begin{pmatrix} -\beta(\hat Y\cdot\eta) & \<\hat Y,\cdot\> & Id
\end{pmatrix}e^{-\frac{|\eta\cdot\hat Y|^2}{2F^{-1}(\xi^2+F^2)\kappa}}\,d\hat Y,
\end{split}
\end{equation}
where $\beta=\frac{\xi-iF}{\xi^2+F^2}$.

Given a non-zero $(v^0,v',f)$ in the kernel of the scattering principal symbol of $\delta_{\mathcal A,F}$, i.e. $v^0_j(\xi-iF)+v'_j\cdot\eta=0$ for $j=1,\cdots,N$, we want to show that 
\begin{equation*}
\begin{split}
&\Big(\sigma_{sc}(N_F)(y,\xi,\eta)\begin{pmatrix} v^0 \\ v' \\ f\end{pmatrix},\,\begin{pmatrix} v^0 \\ v' \\ f\end{pmatrix}\Big)\\
=& c\, (\xi^2 + F^2)^{-1/2}\times\\
 & \quad \int_{\mathbb S^{n-2}}\Big|W(0,y,0,\hat Y)(-\beta(\hat Y\cdot\eta) v^0+\hat Y\cdot v'+f)\Big|^2e^{-\frac{|\eta\cdot\hat Y|^2}{2F^{-1}(\xi^2+F^2)\kappa}}\,d\hat Y>0.
\end{split}
\end{equation*}
Now the argument is similar to that of Proposition \ref{fiber infinity}. We only need to show that there exists $\hat Y\in \mathbb S^{n-2}$ such that $-\beta(\hat Y\cdot\eta) v^0+\hat Y\cdot v'+f\neq 0$. Assume that $-\beta(\hat Y\cdot\eta) v^0+\hat Y\cdot v'+f=0$ for all $\hat Y$. Then $\beta(\hat Y\cdot\eta) v^0-\hat Y\cdot v'+f=0$ too, which implies that $-\beta(\hat Y\cdot\eta) v^0+\hat Y\cdot v'=0$ and $f=0$. Now since $(v^0,v',f)\neq 0$ but $f=0$, we see that $(v^0,v')$ is non-zero and thus there exists some $1\leq j\leq N$ with $(v^0_j,v'_j)$ non-zero. Since $v^0_j(\xi-iF)+v'_j\cdot \eta=0$ and $F\neq 0$,
\begin{equation}\label{new equation}
0=-\beta (\hat Y\cdot \eta)v^0_j+\hat Y\cdot v'_j=\frac{1}{\xi^2+F^2}(\eta\cdot v'_j)(\hat Y\cdot \eta)+\hat Y\cdot v'_j
\end{equation}
for all $\hat Y$. Let $\hat Y\perp \eta$, \eqref{new equation} implies $\hat Y\cdot v'_j=0$, i.e. $v'_j=c\eta$ for some $c\in\mathbb R$. If $\eta=0$, then $v'_j=0$ too. If $\eta\neq 0$, let $\hat Y=\eta/|\eta|$, then \eqref{new equation} implies $c\frac{\xi^2+F^2+|\eta|^2}{\xi^2+F^2}|\eta|=0$. Now both $\frac{\xi^2+F^2+|\eta|^2}{\xi^2+F^2}$ and $|\eta|$ are non-zero, we conclude that $c=0$, i.e. $v'_j=0$. Consequently $v^0_j=-(\xi-iF)^{-1}(v'_j\cdot\eta)=0$, so $(v^0_j,v'_j)=0$, which is a contradiction.
This implies that the scattering principal symbol \eqref{one form scattering symbol} is positive definite when restricted to the kernel of the principal symbol of $\delta_{\mathcal A,F}$.

Moreover, by applying an argument similar to Proposition \ref{boundary elliptic}, it is not difficult to see that 
$$\Big(\sigma_{sc}(N_F)(y,\xi,\eta)\begin{pmatrix}v^0 \\ v' \\ f\end{pmatrix},\,\begin{pmatrix}v^0 \\ v' \\ f\end{pmatrix}\Big)\geq C\<(\xi,\eta)\>^{-1}\Big|\begin{pmatrix}v^0 \\ v' \\f\end{pmatrix}\Big|^2,$$
for some $C>0$. This is the desired bound for ellipticity.

Now by a similar approximation argument, we can find a smooth, even $\chi$ with compact support such that for this specific $\chi$ the operator $N_F$ is elliptic at finite points of the bundle $^{sc}T^*\Omega$ when restricted to the kernel of the principal symbol of $\delta_{\mathcal A,F}$.
\end{proof}

We combine Propositions \ref{fiber infinity} and \ref{finite points} to achieve the following ellipticity result, its proof is quite straightforward.

\begin{prop}\label{one form elliptic}
There exist a neighborhood $\tilde O$ of $\overline O_p$ in $\Omega$, a cutoff function $\chi\in C^{\infty}_c(\mathbb R)$ and an operator $P\in \Psi^{-3,0}_{sc}(\Omega)$ such that
$$N_F+d_{\mathcal A,F}P\delta_{\mathcal A,F}\in \Psi^{-1,0}_{sc}(\Omega; (^{sc}T^*\Omega)^N\otimes \mathbb C^N, (^{sc}T^*\Omega)^N\otimes \mathbb C^N)$$ 
is elliptic in $\tilde O$.
\end{prop}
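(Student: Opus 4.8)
The plan is to combine the ellipticity of $N_F$ on the kernel of the gauge symbol (Propositions \ref{fiber infinity} and \ref{finite points}) with the ellipticity of $d_{\mathcal A,F}$ (equivalently $\delta_{\mathcal A,F}$) on the complementary directions, using a standard symbol-level decomposition. At each point $(z,\zeta)\in {}^{sc}T^*\Omega$, whether at fiber infinity or at a finite point over $\p\Omega$, the fiber ${}^{sc}T^*_z\Omega\times \mathbb C^N$ (thought of as the target of $d_{\mathcal A,F}$, which is where $N_F$ acts) splits as the range of $\sigma(d_{\mathcal A,F})(z,\zeta)$ — a space of real dimension $N$ — and its orthogonal complement, which is precisely the kernel of $\sigma(\delta_{\mathcal A,F})(z,\zeta)$ since $\delta_{\mathcal A,F}$ is the $g_{sc}$-adjoint of $d_{\mathcal A,F}$ (Lemma \ref{gauge symbol}). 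By Propositions \ref{fiber infinity} and \ref{finite points}, $\sigma(N_F)$ is positive definite (bounded below by $c\langle\zeta\rangle^{-1}$ at finite points) on the second summand. On the first summand one would like $d_{\mathcal A,F}\,\sigma\,\delta_{\mathcal A,F}$ to dominate, so the natural choice is $P$ with principal symbol a multiple of $(\sigma(\delta_{\mathcal A,F})\sigma(d_{\mathcal A,F}))^{-1}$, suitably regularized in $\langle\zeta\rangle$: since $d_{\mathcal A,F},\delta_{\mathcal A,F}\in\Diff^{1,0}_{sc}$ and the compound $\sigma(\delta_{\mathcal A,F})\sigma(d_{\mathcal A,F})=|\zeta|^2+F^2$ (reading off Lemma \ref{gauge symbol}) is elliptic of order $(2,0)$, its inverse lives at order $(-2,0)$, and to land in $\Psi^{-1,0}_{sc}$ after pre- and post-composing with operators of order $(1,0)$ one needs $P\in\Psi^{-3,0}_{sc}(\Omega)$, matching the statement.

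Concretely I would proceed as follows. First, fix the cutoff $\chi$ furnished jointly by Propositions \ref{fiber infinity} and \ref{finite points} so that $N_F\in\Psi^{-1,0}_{sc}$ and $\sigma(N_F)$ is positive definite on $\Ker\sigma(\delta_{\mathcal A,F})$ at all relevant points of ${}^{sc}T^*\Omega$ over $\tilde O$ (both at fiber infinity and at finite points over $\p\Omega$; by smoothness of the Schwartz kernel down to $x=0$ this persists in a neighborhood). Second, construct $P$: take a self-adjoint $P\in\Psi^{-3,0}_{sc}(\Omega)$ with principal symbol $q(z,\zeta)=\phi(z)\,m(z,\zeta)\,(|\zeta|^2+F^2)^{-2}\,\Id$, where $\phi$ is a cutoff equal to $1$ near $\overline{O_p}$ and supported in $\tilde O$, and $m$ is a positive symbol of order $(2,0)$ chosen large enough — e.g.\ $m=(1+|\zeta|^2)$ times a large constant — so that, on the range of $\sigma(d_{\mathcal A,F})$, the symbol $\sigma(d_{\mathcal A,F})\,q\,\sigma(\delta_{\mathcal A,F})$ exceeds whatever (possibly indefinite) contribution $\sigma(N_F)$ makes there. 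Third, verify ellipticity of $\sigma(N_F)+\sigma(d_{\mathcal A,F})\,q\,\sigma(\delta_{\mathcal A,F})$ by testing against a decomposed vector $w=w_1+w_2$ with $w_1\in\Ran\sigma(d_{\mathcal A,F})$, $w_2\in\Ker\sigma(\delta_{\mathcal A,F})$: the cross terms vanish because $d_{\mathcal A,F}P\delta_{\mathcal A,F}$ annihilates $w_2$ and $\langle\sigma(N_F)w_1,w_2\rangle$ is absorbed by the positive $w_2$-block after a Cauchy–Schwarz/Peter–Paul split. This gives a lower bound $\ge c\langle\zeta\rangle^{-1}|w|^2$ in $\tilde O$, i.e.\ ellipticity in $\Psi^{-1,0}_{sc}$.

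Two points need care. The cross-term absorption is the routine-but-crucial linear algebra step: writing $w=w_1+w_2$ one has $\langle(\sigma(N_F)+\sigma(d_{\mathcal A,F})q\sigma(\delta_{\mathcal A,F}))w,w\rangle = \langle\sigma(N_F)w,w\rangle + \langle q\sigma(\delta_{\mathcal A,F})w_1,\sigma(\delta_{\mathcal A,F})w_1\rangle$, and since $\sigma(N_F)$ is only known to be positive on $w_2$ (not globally), one controls $2\,\mathrm{Re}\,\langle\sigma(N_F)w_1,w_2\rangle$ by $\epsilon\langle\sigma(N_F)w_2,w_2\rangle + \epsilon^{-1}C|w_1|^2$ and then dominates the $|w_1|^2$ term by enlarging $m$; one must check the bounds are uniform in $z\in\tilde O$ and homogeneous-uniform in $\zeta$, which follows from compactness of $\overline{O_p}$ and the scattering-symbol structure. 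The genuine obstacle, and the reason $P$ must be a pseudodifferential rather than a differential operator, is that $(|\zeta|^2+F^2)^{-2}$ is not polynomial; one must invoke the scattering calculus of \cite{Mel} to produce $P\in\Psi^{-3,0}_{sc}(\Omega)$ with the prescribed principal symbol and to guarantee that $N_F+d_{\mathcal A,F}P\delta_{\mathcal A,F}$ composes within $\Psi^{-1,0}_{sc}({}^{sc}T^*\Omega\times C^\infty(\Omega);{}^{sc}T^*\Omega\times C^\infty(\Omega))$ — here the order bookkeeping $(1,0)+(-3,0)+(1,0)=(-1,0)$ and the fact that lower-order terms in the composition do not disturb the principal-symbol ellipticity are exactly the facts supplied by Melrose's calculus.
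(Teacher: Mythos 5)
Your overall scheme --- splitting the fiber over each point of ${}^{sc}T^*\Omega$ into $\Ran\sigma(d_{\mathcal A,F})$ and its orthocomplement $\Ker\sigma(\delta_{\mathcal A,F})$, invoking Propositions \ref{fiber infinity} and \ref{finite points} on the kernel factor, making $d_{\mathcal A,F}P\delta_{\mathcal A,F}$ positive on the range factor, and absorbing the cross terms by Peter--Paul --- is exactly the argument the paper intends (it simply ``combines'' Propositions \ref{fiber infinity} and \ref{finite points} in the manner of \cite{SUV14}). However, your concrete construction of $P$ is inconsistent with the statement and with your own order count: with $q=\phi\, m\,(|\zeta|^2+F^2)^{-2}$ and $m\sim 1+|\zeta|^2$, the symbol $q$ has order $(-2,0)$, so $P\in\Psi^{-2,0}_{sc}$ and $d_{\mathcal A,F}P\delta_{\mathcal A,F}\in\Psi^{0,0}_{sc}$; this contradicts $P\in\Psi^{-3,0}_{sc}$ and the membership of the sum in $\Psi^{-1,0}_{sc}$, and the resulting operator is not even elliptic in $\Psi^{0,0}_{sc}$, since on $\Ker\sigma(\delta_{\mathcal A,F})$ the only available lower bound is $c\langle\zeta\rangle^{-1}\abs{w_2}^2$, which is not an order-$0$ elliptic bound. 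The worry that drove you to let $m$ grow in $\zeta$ --- that $\sigma(N_F)$ might contribute an unboundedly bad term on the range factor --- is unfounded: $\sigma(N_F)$ is a symbol of order $(-1,0)$, so its norm is $O(\langle\zeta\rangle^{-1})$ (and for $\chi\geq 0$, resp.\ the Gaussian, the symbols in Propositions \ref{fiber infinity} and \ref{finite points} are in fact positive semidefinite on the whole fiber), so the gauge-kernel lower bound, the cross term, and the required positivity on the range all live at the same scale $\langle\zeta\rangle^{-1}$.

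The fix is immediate and restores the bookkeeping you stated correctly at the outset: take $\sigma(P)=C\,(|\zeta|^2+F^2)^{-3/2}\,\Id$ with $C$ a large \emph{constant} (a $z$-cutoff is unnecessary; if used, it should equal $1$ on $\tilde O$, not merely near $\overline{O_p}$, since ellipticity is asserted on $\tilde O$). Then $P\in\Psi^{-3,0}_{sc}$, and since $\sigma(\delta_{\mathcal A,F})\sigma(d_{\mathcal A,F})=(|\zeta|^2+F^2)\Id$ by Lemma \ref{gauge symbol}, for $w_1=\sigma(d_{\mathcal A,F})u$ one computes $\langle\sigma(d_{\mathcal A,F})\sigma(P)\sigma(\delta_{\mathcal A,F})w_1,w_1\rangle=C(|\zeta|^2+F^2)^{1/2}\abs{u}^2\geq c\,C\,\langle\zeta\rangle^{-1}\abs{w_1}^2$, because $\abs{w_1}^2=(|\zeta|^2+F^2)\abs{u}^2$. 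Your Cauchy--Schwarz/Peter--Paul absorption of $2\,\mathrm{Re}\,\langle\sigma(N_F)w_1,w_2\rangle$ then closes at the uniform scale $\langle\zeta\rangle^{-1}$ once $C$ is large, with uniformity over the compact closure of $\tilde O$ coming from $\alpha>0$ and the uniform positive definiteness of $W^*W$ there; the same computation at fiber infinity (where the gauge symbol is $(\xi,\eta\otimes,0)$ and $\sigma(\delta)\sigma(d)=|\zeta|^2$) gives the homogeneous bound. This yields $\langle(\sigma(N_F)+\sigma(d_{\mathcal A,F})\sigma(P)\sigma(\delta_{\mathcal A,F}))w,w\rangle\geq c'\langle\zeta\rangle^{-1}\abs{w}^2$ on $\tilde O$, i.e.\ ellipticity in $\Psi^{-1,0}_{sc}$ as claimed, and the composition statement is supplied by the scattering calculus as you say. (Two trivial slips: the range of $\sigma(d_{\mathcal A,F})$ has \emph{complex} dimension $N$; and $\chi$ must be taken even, nonnegative, from the approximating sequence so that Propositions \ref{fiber infinity} and \ref{finite points} hold for one and the same $\chi$ --- which your first step already arranges.)
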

\begin{proof}
Note that in Propositions \ref{fiber infinity} and \ref{finite points}, the analysis of the principal symbol of $N_F$ is at $\p \Omega=\{x=0\}$, thus by the smooth dependence on $(x,y)$ the ellipticity of $N_F$ (up to the solenoidal gauge) holds in some small neighborhood $\tilde O$ of $\p \Omega\cap \p M$ in $\Omega$, with compact closure. In particular by taking $c>0$ sufficiently small, we can make $\tilde O$ contain $\overline O_p$.

Let $P$ be a scattering pseudodifferential operator with the scattering principal symbol $(\xi^2+F^2+|\eta|^2)^{-3/2}$, thus $P$ has positive principal symbol and $P\in \Psi^{-3,0}_{sc}(\Omega)$. We use $\sigma_{\star}$ to represent both the standard and scattering principal symbols, given any $h=(v^0, v', f)$, assume that
$$\Big(\sigma_\star (N_F+d_{\mathcal A,F} P \delta_{\mathcal A,F}) h, h\Big)=(\sigma_\star(N_F) h, h)+\sigma_\star(P)|\sigma_\star(\delta_{\mathcal A,F}) h|^2=0.$$
Since $(\sigma_\star(N_F) h,h)$ is non-negative, we get that $(\sigma_\star(N_F) h,h)=0$ and $\sigma_\star(\delta_{\mathcal A,F})h=0$.  Now the proof of Propositions \ref{fiber infinity} and \ref{finite points} implies $h=0$. Therefore $N_F+d_{\mathcal A,F}P\delta_{\mathcal A,F}$ is elliptic.
\end{proof}

Similar to Theorem \ref{local function}, the ellipticity result of Proposition \ref{one form elliptic} gives the following stable invertibility of $I_{\mathcal A}$ under the solenoidal gauge. Its proof is similar to the one of Theorem \ref{local function}, so we omit it.
\begin{cor}\label{injective with gauge}
For sufficiently small $c>0$ and any $F>0$, $s\geq 0$, given $h=\alpha+f\in H^s(TO_p;\mathbb C^N) \oplus H^s(O_p;\mathbb C^N)$ with $\alpha$ a 1-form, if $\delta_{\mathcal A,F} h_F=0$ in $O_p$, then 
the $H^{s-1}$ norm of $h$ restricted to any compact subset of $O_p$ is controlled by the $H^s$ norm of $I_{\mathcal A}h|_{\mathcal M_{O_p}}$.
\end{cor}

Finally we need to rephrase the above invertibility result in a gauge free way, the argument is a direct modification of the one for the standard tensor tomography from \cite[section 4]{SUV14}. Recalling from the argument there, one needs to consider the `solenoidal Witten Laplacian' $\Delta_{\mathcal A,F}=\delta_{\mathcal A,F}d_{\mathcal A,F}$ on various regions $U$ in $\Omega$ with $O_p\subset \overline U$, $\p U=\p\Omega\cup \p_{int}U$, where $\p_{int}U$ is the part of the boundary of $U$ contained in the interior of $\Omega$, i.e. the interior boundary, so $U$ are manifolds with corners. In particular, $\p_{int}U$ could be $\p M$ or another further away hypersurface in $\Omega\backslash M$.

If $\Delta_{\mathcal A,F}$ is invertible with Dirichlet boundary condition imposed on $\p_{int} U$, we can decompose $h_F:=\begin{pmatrix}\alpha \\ f\end{pmatrix}_F$ into
$$h_F=\mathcal S_{\mathcal A,F}h_F+\mathcal P_{\mathcal A,F}h_F,$$
where $\mathcal P_{\mathcal A,F}=d_{\mathcal A,F}\Delta_{\mathcal A,F}^{-1}\delta_{\mathcal A,F}$. Thus we denote $\mathcal P_{\mathcal A,F}h_F$ by $d_{\mathcal A,F}p_F=Q^{-1}e^{-F/x}d_{\mathcal A}p$ with $p|_{\p O_p\cap \p M}=0$, then given $h=\begin{pmatrix} \alpha \\ f\end{pmatrix}$
$$I_{\mathcal A}h=I_{\mathcal A}(e^{F/x}Qh_F)=I_{\mathcal A}(e^{F/x}Q(h_F-d_{\mathcal A,F}p_F))=I_{\mathcal A}(e^{F/x}Q\mathcal S_{\mathcal A,F}h_F).$$
Notice that $\delta_{\mathcal A,F}(\mathcal S_{\mathcal A,F}h_F)=0$, by Corollary \ref{injective with gauge} if $I_{\mathcal A}h=0$ we have $\mathcal S_{\mathcal A,F}h_F=0$, i.e. $h_F=d_{\mathcal A,F}p_F$ or $h=d_{\mathcal A}p$ for some function $p$ with $p|_{\p O_p\cap \p M}=0$ which is exactly the natural kernel of $I_{\mathcal A}$. 
So one just needs to show that $\Delta_{\mathcal A,F}$ is invertible with Dirichlet boundary condition imposed on the interior boundary, however this is immediate from the argument of \cite[Section 4]{SUV14}. Note that as stated in Lemma \ref{gauge symbol}, $d_{\mathcal A,F}$, $\delta_{\mathcal A,F}$ has similar principal symbols as the ones of $d^s_F$ and $\delta^s_F$ in \cite{SUV14} respectively, thus $\Delta_{\mathcal A,F}$ and $\Delta_F=\delta^s_F d^s_F$ have the same principal symbol. It was already shown in \cite{SUV14} that $\Delta_F$ is invertible. Moreover the stable determination arguments of \cite[Section 4]{SUV14} work in a similar way for our X-ray transform $I_{\mathcal A}$, i.e. $h-d_{\mathcal A}p$ is stably controlled by $I_{\mathcal A}h$ on any compact subsets of $O_p$, with a reconstruction formula, this completes the proof of Theorem \ref{local connection}.
\qed


\section{Proof of the global result for $I_{\mathcal A}$} \label{sec_proof_global}

We will now give the proof of Theorem \ref{thm:maingloballinear}. If $U$ is a connected open subset of $M$ and $f: U \to \mathbb R$ is a smooth strictly convex exhaustion function, we will use the notation  
\begin{align*}
U_{\geq t} &= \{ x \in U \,;\, f(x) \geq t \}, \\
U_{> t} &= \{ x \in U \,;\, f(x) > t \}, \ \mathrm{etc}.
\end{align*}
The following simple lemma will be useful in gluing local constructions to global ones.

\begin{lemma} \label{lemma_exhaustion_boundary_geodesic}
Let $(M,g)$ be compact with strictly convex boundary, let $U \subset M$ be a connected open set, and let $f: U \to \mathbb R$ be a smooth strictly convex exhaustion function. Given $x_0 \in U$, any geodesic $\gamma$ in $M$ satisfying $\gamma(0) = x_0$ and $df(\dot{\gamma}(0)) \geq 0$ stays in the set $U_{\geq f(x_0)}$ and reaches $\partial M$ in finite time.
\end{lemma}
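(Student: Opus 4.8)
The plan is to reduce everything to the elementary behaviour of the scalar function $h(t) := f(\gamma(t))$. Wherever $\gamma(t) \in U$ this is smooth, with $h(0) = f(x_0)$, $h'(0) = df_{x_0}(\dot\gamma(0)) \geq 0$ and $h''(t) = \mathrm{Hess}(f)(\dot\gamma(t),\dot\gamma(t)) > 0$ by strict convexity of $f$. Hence, on any interval $[0,t]$ with $\gamma([0,t]) \subset U$, one has $h'(s) = h'(0) + \int_0^s h''(\sigma)\,d\sigma > 0$ for $s \in (0,t]$, so $h$ is strictly increasing there and $\gamma(s) \in U_{> f(x_0)}$ for $s \in (0,t]$.

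I would then record that $K := U_{\geq f(x_0)}$ is compact: if $f(x_0) > \inf_U f$ this is the exhaustion property of $f$, while if $f(x_0) = \inf_U f$ then $f$ attains its infimum at $x_0$, so by Lemma~\ref{lemma_exhaustion_function_properties}(1) one has $U = M$ and $K = M$. In either case $K \subset U$, and the previous paragraph shows that $\gamma$ remains in $K$ for exactly as long as it remains in $U$; so $\mathcal T := \{ t \geq 0 : \gamma([0,t]) \subset K \}$ equals $\{ t \geq 0 : \gamma([0,t]) \subset U \}$, is an interval containing a neighbourhood of $0$ in $[0,\infty)$, and is closed in the domain of $\gamma$ because $K$ is closed. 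Put $T_* = \sup \mathcal T$.

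The final step is to identify $T_*$. First, $T_* = \infty$ is impossible: by compactness of $K$ and strict convexity there is $c_0 > 0$ with $\mathrm{Hess}(f) \geq c_0 \, g$ on $K$, so $\gamma([0,\infty)) \subset K$ would give $h(t) \geq f(x_0) + \frac{c_0}{2} t^2 \to \infty$, contradicting the boundedness of $f$ on $K$. Hence $T_* < \infty$ and $\gamma([0,T_*]) \subset K \subset U$. Second, $\gamma(T_*) \in \partial M$: otherwise $\gamma(T_*) \in M^{\mathrm{int}}$, so $\gamma$ extends past $T_*$ and, $U$ being open, stays in $U$ on $[T_*,T_*+\delta)$ for some $\delta > 0$; then by monotonicity of $h$ it stays in $K$ there as well, contradicting $T_* = \sup\mathcal T$. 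Thus $\gamma$ reaches $\partial M$ at the finite time $T_*$ while staying in $U_{\geq f(x_0)}$ throughout. (If $x_0 \in \partial M$ with $\dot\gamma(0)$ pointing out of $M$ the assertion is vacuous, so one may assume $\gamma$ is defined for small $t > 0$.)

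The one thing needing genuine care is that $\gamma$ must not be able to escape $U$ through $\partial U \setminus \partial M$: this is exactly what the exhaustion hypothesis prevents, since it makes $K$ compact with $\partial K$ contained in the level set $\{ f = f(x_0) \} \subset U$. Everything else is bookkeeping, mainly keeping the distinction between ``$\gamma$ leaves $U$'' and ``$\gamma$ reaches $\partial M$''.
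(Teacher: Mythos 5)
Your proof is correct and follows essentially the same route as the paper's: study $h(t)=f(\gamma(t))$, use $h'(0)\geq 0$ and $h''>0$ to get monotonicity (hence confinement to $U_{\geq f(x_0)}$), then use compactness of the superlevel set together with quadratic growth of $h$ to force exit through $\partial M$ in finite time, invoking Lemma~\ref{lemma_exhaustion_function_properties}(1) when $f(x_0)=\inf_U f$. Your only addition is making explicit the point the paper leaves implicit, namely that the closedness of $U_{\geq f(x_0)}$ in $M$ rules out escape through $\partial U\setminus\partial M$, which is a welcome clarification rather than a different argument.
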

\begin{proof}
Let $x_0 \in U$, let $v_0 \in S_{x_0} M$ satisfy $df(v_0) \geq 0$, and let $\gamma: [0,T] \to M$ be the geodesic with $\gamma(0) = x_0$ and $\dot{\gamma}(0) = v_0$ where $T$ is the first time when $\gamma$ reaches the boundary (if it does not, set $T = \infty$). Then $h(t) = f(\gamma(t))$ satisfies 
\[
h(t) = h(0) + h'(0) t + \frac{1}{2} h''(c(t)) t^2
\]
where $c(t) \in [0,t]$. Since $h'(0) \geq 0$ and $h'' > 0$, $h$ is strictly increasing. Therefore $f(\gamma(t))\geq f(x_0)$ for any $t\in [0,T]$, which implies that $\gamma(t)$ stays in the set $U_{\geq f(x_0)}$.

It remains to show that $T < \infty$. If $f(x_0) > \inf f$ the set $U_{\geq f(x_0)}$ is compact and by strict convexity $h(t) \geq h(0) + ct^2$ for some $c > 0$, showing that $\gamma$ reaches the boundary in finite time (otherwise $f$ would not be bounded from above, contradicting Lemma \ref{lemma_exhaustion_function_properties}). On the other hand, if $f(x_0) = \inf f$ then Lemma \ref{lemma_exhaustion_function_properties} shows that $U = M$, $f$ is strictly convex in $M$, and any geodesic reaches the boundary in finite time.
\end{proof}

We will next establish the following result. It only differs from Theorem \ref{thm:maingloballinear} by recovering $h$ up to gauge in the set $U_{> a}$, which may be slightly smaller than $U$.

\begin{prop}
Let $(M,g)$ be a compact manifold with strictly convex boundary and $\dim(M) \geq 3$, let $U$ be a connected open subset of $M$, and let $f: U \to \mathbb R$ be a smooth strictly convex exhaustion function with $a = \inf_U f$. Let $(A,\Phi)$ be a $GL(N,\mathbb C)$-pair in $U$ and suppose $h(x,v)=f(x)+\alpha_{x}(v)$ where
$f\in C^{\infty}(U,\mathbb C^N)$ and $\alpha$ is a smooth $\mathbb C^N$-valued 1-form in $U$. If 
\[
(I_{\mathcal A} h)(\gamma) = 0 \ \ \text{for any geodesic $\gamma$ in $U$ with endpoints on $\partial M$},
\]
then 
\[
f = \Phi p \quad \text{and} \quad \alpha = dp + Ap \quad \text{in $U_{> a}$}
\]
for some $p \in C^{\infty}(U_{> a}, \mathbb C^N)$ with $p|_{\partial M} = 0$.
\label{prop:globallinear}
\end{prop}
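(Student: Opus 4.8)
The plan is to reduce the statement to the local Theorem~\ref{local connection} by a layer stripping argument along the level sets of the exhaustion function, using a regularity property of the transport equation to glue the local pieces into a single global gauge. Throughout I write $U_{\ge t}=\{x\in U:\ f(x)\ge t\}$, $U_{>t}$ etc., and $d_{\mathcal{A}}p=(dp+Ap,\Phi p)$, so that the claim is exactly that $h=d_{\mathcal{A}}p$ in $U_{>a}$ for some smooth $p$ with $p|_{\partial M}=0$. By Lemma~\ref{lemma_exhaustion_function_properties} the level sets $\Sigma_t=f^{-1}(t)$, $t\in(a,b]$, foliate $U_{>a}$, each $\Sigma_t$ being strictly convex as seen from the superlevel set $U_{\ge t}$, and $\Sigma_b\subset\partial M$ is the compact set where $f$ attains its maximum. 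By Lemma~\ref{lemma_exhaustion_boundary_geodesic} every geodesic meeting $U_{>a}$ stays in $U$ and reaches $\partial M$, so the hypothesis $I_{\mathcal{A}}h(\gamma)=0$ applies to all of them; and the kernel characterisation from Section~\ref{one form} (that $d_{\mathcal{A}}$ is injective on functions vanishing on $\partial M$) shows that a gauge potential with zero boundary value is unique. I let $\mathcal T\subseteq(a,b]$ be the set of $\tau$ for which $h=d_{\mathcal{A}}p$ on $U_{\ge\tau}$ for some $p\in C^\infty(U_{\ge\tau};\mathbb C^N)$ with $p|_{\partial M}=0$. Then $\mathcal T$ is upward closed and, by uniqueness, the corresponding potentials are mutually compatible, so it suffices to prove that $\inf\mathcal T=a$: the $p_\tau$ then assemble to the desired $p$ on $U_{>a}$.

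For the base case $\mathcal T\ne\emptyset$: for $\tau<b$ close to $b$, $U_{\ge\tau}$ is a small neighbourhood in $U$ of $\Sigma_b\subset\partial M$, covered by finitely many of the neighbourhoods $O_{p_j}$ ($p_j\in\Sigma_b$) from Theorem~\ref{local connection}; taking $\tau$ close enough to $b$ forces $U_{\ge\tau}\subset\bigcup_j O_{p_j}$, and the $O_{p_j}$-local geodesics are geodesics in $U$ with endpoints on $\partial M$ by Lemma~\ref{lemma_exhaustion_boundary_geodesic}. Since $I_{\mathcal{A}}h$ vanishes on them and $h$ is smooth, Theorem~\ref{local connection} together with elliptic regularity for $d_{\mathcal{A}}$ yields $h=d_{\mathcal{A}}p_j$ with $p_j$ smooth and $p_j|_{\partial M}=0$ on each $O_{p_j}$; uniqueness of the gauge lets these agree on overlaps, so $\tau\in\mathcal T$.

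Now the inductive step, which is the crux. Let $\tau_*=\inf\mathcal T$ and suppose $\tau_*>a$; I aim to produce $\eps>0$ with $(\tau_*-\eps,b]\subseteq\mathcal T$, contradicting the definition of $\tau_*$. By a brief limiting argument (the monotone compatible potentials on $U_{>\tau_*}$ assemble and extend across the interior hypersurface $\Sigma_{\tau_*}$ by interior elliptic regularity for $d_{\mathcal{A}}p=h$) one may assume $\tau_*\in\mathcal T$, so $h=d_{\mathcal{A}}p$ on $U_{\ge\tau_*}$ with $p|_{\partial M}=0$. Cover the compact $\Sigma_{\tau_*}$ by finitely many neighbourhoods $O_{q_i}$, $q_i\in\Sigma_{\tau_*}$, adapted to the strictly convex hypersurface $\Sigma_{\tau_*}$ exactly as the neighbourhoods of Theorem~\ref{local connection} are adapted to $\partial M$ — with artificial boundary a nearby level set $\Sigma_{\tau_*-\delta}$, concave from the side of $U_{\ge\tau_*-\delta}$ — and near any point of $\Sigma_{\tau_*}\cap\partial M$ simply use the original boundary version. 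The essential point is to convert the global vanishing $I_{\mathcal{A}}h=0$ into the local vanishing needed to run the local theorem on the new shell: extend $p|_{\Sigma_{\tau_*}}$ to a smooth $\bar p$ on a two-sided neighbourhood of $\Sigma_{\tau_*}$, equal to $p$ on the $U_{\ge\tau_*}$ side. For a geodesic $\gamma$ in $U$ with endpoints on $\partial M$, the fundamental theorem of calculus for the transport equation $XW=W\mathcal{A}$, together with $h=d_{\mathcal{A}}p$ and $p|_{\partial M}=0$ on $U_{\ge\tau_*}$, shows that the contribution of $\gamma\cap U_{\ge\tau_*}$ to $I_{\mathcal{A}}h(\gamma)$ telescopes to the boundary values $W\bar p$ at the crossings of $\Sigma_{\tau_*}$; hence $I_{\mathcal{A}}(h-d_{\mathcal{A}}\bar p)$ vanishes on $O_{q_i}$-local geodesics lying below $\Sigma_{\tau_*}$. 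Applying Theorem~\ref{local connection} to $h-d_{\mathcal{A}}\bar p$ near each $q_i$ recovers it, up to gauge, just below $\Sigma_{\tau_*}$; finally the regularity result for the transport equation on nontrapping manifolds with strictly convex boundary — realised through the global smooth primitive of $Xu+\mathcal{A}u=-h$ — is what forces these local potentials to patch with $p$ into a single $p\in C^\infty(U_{\ge\tau_*-\eps};\mathbb C^N)$ still vanishing on $\partial M$. This places $\tau_*-\eps$ in $\mathcal T$, a contradiction, so $\inf\mathcal T=a$ and the proof is complete.

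I expect the inductive step to be the main obstacle: both the bookkeeping that reduces $I_{\mathcal{A}}h=0$ to a usable local statement along the interior hypersurface $\Sigma_{\tau_*}$, and — more seriously — the transport-equation regularity needed to glue the many local gauge functions (above and below $\Sigma_{\tau_*}$, and near $\Sigma_{\tau_*}\cap\partial M$) into one smooth $p$ with the correct boundary behaviour, which is precisely where the foliation's failure to be adapted to $\partial M$ has to be confronted head on.
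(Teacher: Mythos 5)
Your skeleton is the same as the paper's: a continuity argument in the level parameter of $f$, with the base case at the top level set $f^{-1}(b)\subset\partial M$ handled by Theorem \ref{local connection}, and the crossing of an interior level set $\Sigma_{\tau_*}$ handled by subtracting $d_{\mathcal A}\bar p$ for a smooth two-sided extension $\bar p$ of the known gauge, observing that $I_{\mathcal A}(h-d_{\mathcal A}\bar p)$ vanishes on the short geodesics dipping below $\Sigma_{\tau_*}$ (since their extensions run up to $\partial M$ inside $U_{\geq\tau_*}$ by Lemma \ref{lemma_exhaustion_boundary_geodesic}), and applying the local theorem at the strictly convex hypersurface $\Sigma_{\tau_*}$. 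The genuine gap is in your reduction ``one may assume $\tau_*\in\mathcal T$.'' You invoke \emph{interior elliptic regularity}, but $\Sigma_{\tau_*}$ is not interior to the set where the assembled potential lives: the compatible potentials only give a smooth $w$ on the open set $U_{>\tau_*}$, and the equation $dw=\alpha-Aw$ yields no a priori bound on $w$ (nor any limit) as one approaches $\Sigma_{\tau_*}$ from above, so elliptic regularity alone cannot produce the extension to $U_{\geq\tau_*}$. This closure step is exactly part (3) of the paper's proof, and it is done there with a different tool: solve $XW-W(A+\Phi)=0$ on $E_{\tau_*}=\{(x,v)\,:\,x\in U_{\geq\tau_*},\ df|_x(v)\geq 0\}$ with data $\mathrm{id}$ on $E_{\tau_*}\cap\partial(SM)$, use $X(Ww)=Wh$ to write $w(x)=-W(x,v)^{-1}\int_0^{\tau(x,v)}(Wh)(\varphi_t(x,v))\,dt$ for $df|_x(v)\geq 0$, and evaluate along a smooth vector field $Y$ with $df(Y)\geq 0$, nontangential at $\partial M$; this formula is smooth on the closed superlevel set and provides the extension. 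Some argument of this transport-representation type is needed; your ``brief limiting argument'' does not go through as stated.

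The second point you leave open is the patching. You assert that ``the regularity result for the transport equation'' forces the new potentials below $\Sigma_{\tau_*}$ to glue with $p$ into one smooth function vanishing on $\partial M$; but that result is not what does this work (in the paper it only enters later, in passing from Proposition \ref{prop:globallinear} to Theorem \ref{thm:maingloballinear} at the minimum point). The gluing in the paper is elementary but has to be carried out: (i) a strengthened base case producing a single gauge on a full neighbourhood $V$ of $U_{\geq\tau_*}\cap\partial M$, with the local charts chosen in semigeodesic coordinates for $\Sigma_{\tau_*}$ so that $O_p\cap U_{\geq\tau_*}$ is connected and ODE uniqueness identifies the local gauges with $p$ there --- this is precisely how the ``corner'' $\Sigma_{\tau_*}\cap\partial M$, which you flag, is handled; and (ii) smoothness of the glued potential across the interior part of $\Sigma_{\tau_*}$, which follows because the local theorem gives $v_p|_{\Sigma_{\tau_*}}=0$ and both sides satisfy the same normal ODE $\partial_{x_n}(\cdot)+(A(\partial_{x_n})+\Phi)(\cdot)=h(x,\partial_{x_n})$, so all normal derivatives match at $x_n=0$ (a similar ODE-uniqueness point is also needed to make your base-case gauges agree on overlaps, which requires choosing the $O_{p_j}$ as flow-outs of normal geodesics). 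You correctly identify this as the crux, but as written the inductive step is incomplete there.
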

\begin{proof}
By Lemma \ref{lemma_exhaustion_function_properties}, $f$ has no critical points in $U_{> a}$. Let $b = \sup_U f$ and consider the set 
\begin{align*}
I = \{ t \in (a,b) \,;\, h|_{U_{\geq t}} = (X+A+\Phi)v_t & \text{ for some $v_t \in C^{\infty}(U_{\geq t}, \mathbb C^N)$} \\
 & \qquad \text{with $v_t|_{U_{\geq t} \cap \partial M} = 0$} \}.
\end{align*}
We will show that 
\begin{enumerate}
\item[(1)]
$(b-\varepsilon, b) \subset I$ for some $\varepsilon > 0$; \\[-8pt]
\item[(2)]
if $c \in I$, then $(c-\varepsilon, c) \subset I$ for some $\varepsilon > 0$; \\[-8pt]
\item[(3)] 
if $(c,b) \subset I$ for some $c > a$, then $c \in I$.
\end{enumerate}
These facts imply that $I = (a,b)$ by connectedness. It follows that for any $t \in (a,b)$ there is $v_t \in C^{\infty}(U_{\geq t}, \mathbb C^N)$ with $h|_{U_{\geq t}} = (X+A+\Phi)v_t$ and $v_t|_{U_{\geq t} \cap \partial M} = 0$. If $s \leq t$, then $v_s-v_t$ satisfies  
\[
(X+A+\Phi)(v_s-v_t) = 0 \text{ in $U_{\geq t}$}, \quad v_s-v_t|_{U_{\geq t} \cap \partial M} = 0.
\]
Lemma \ref{lemma_exhaustion_boundary_geodesic} ensures that for any $z \in U_{\geq t}$ there is a geodesic in $U_{\geq t}$ connecting $z$ to $\partial M$. On this geodesic the equation $(X+A+\Phi)(v_s-v_t) = 0$ becomes an ODE, and uniqueness of solutions implies that $(v_s-v_t)(z) = 0$. Thus $v_s = v_t$ in $U_{\geq t}$ whenever $s \leq t$. Defining a function $p \in C^{\infty}(U_{> a}, \mathbb C^N)$ by $p|_{U_{\geq t}} = v_t$ for $t > a$, it follows that 
\[
h = (d+A+\Phi) p \text{ in $U_{> a}$}, \quad p|_{U_{> a} \cap \partial M} = 0.
\]
It remains to show (1)--(3).

\bigskip

{\it Proof of (1).} We note that $f^{-1}(b) = U_{\geq b}$ is compact, nonempty by Lemma \ref{lemma_exhaustion_function_properties}, and $f^{-1}(b) \subset \partial M$ since $f$ cannot attain a maximum at an interior point by strict convexity. The local uniqueness result, Theorem \ref{local connection}, shows that for any $p \in f^{-1}(b)$ there exists a neighbourhood $O_p$ in $M$ and a function $v_p \in C^{\infty}(O_p)$ satisfying 
\[
h|_{O_p} = (X+A+\Phi) v_p, \quad v_p|_{O_p \cap \partial M} = 0.
\]
Possibly after shrinking the neighbourhoods, we may assume that each $O_p$ is of the form $\{ \gamma_{z}(t) \,;\, z \in \Gamma, t \in [0,\varepsilon) \}$ for some open $\Gamma \subset \partial M$ and some $\varepsilon > 0$ where $\gamma_z$ is the inner normal geodesic to $\partial M$ through $z$.

By compactness $f^{-1}(b) \subset V$ where $V = \cup_{j=1}^m O_{p_j}$ for some $p_1, \ldots, p_m \in f^{-1}(b)$. Now if $O_{p_j} \cap O_{p_k} \neq \emptyset$, then 
\[
(X+A+\Phi) (v_{p_j} - v_{p_k}) = 0 \ \ \text{in $O_{p_j} \cap O_{p_k}$}, \quad v_{p_j} - v_{p_k}|_{(O_{p_j} \cap O_{p_k}) \cap \partial M} = 0.
\]
Evaluating the equation along normal geodesics shows that $v_{p_j} = v_{p_k}$ in $O_{p_j} \cap O_{p_k}$. Thus there exists $v \in C^{\infty}(V)$ with 
\[
h|_{V} = (X+A+\Phi) v, \quad v|_{V \cap \partial M} = 0.
\]
Since $U_{\geq b-\varepsilon} \subset V$ for some $\varepsilon > 0$, we have $(b-\varepsilon, b) \in I$. This proves (1).

We observe that the argument proving (1) also gives the following stronger result (using that $U_{\geq t} \cap \partial M$ is compact):
\begin{enumerate}
\item[(1')] 
For any $t > a$ there exists a neighbourhood $V$ of $U_{\geq t} \cap \partial M$ in $U$ and a function $v \in C^{\infty}(V, \mathbb C^N)$ such that 
\[
h|_V = (X+A+\Phi) v, \quad v|_{V \cap \partial M} = 0.
\]
\end{enumerate}

\medskip

{\it Proof of (2).} Let $c \in I$, which implies that there is $\tilde{v} \in C^{\infty}(U_{\geq c}, \mathbb C^N)$ so that 
\[
h|_{U_{\geq c}} = (X+A+\Phi)\tilde{v}, \quad \tilde{v}|_{U_{\geq c} \cap \partial M} = 0.
\]
By (1'), for any $p \in f^{-1}(c) \cap \partial M$ there exists a neighborhood $O_p$ and $v_p \in C^{\infty}(O_p, \mathbb C^N)$ so that 
\[
h|_{O_p} = (X+A+\Phi) v_p, \quad v_p|_{O_p \cap \partial M} = 0.
\]
We wish to glue the functions $\tilde{v}$ and $v_p$ together. Since $(d+A+\Phi)(\tilde{v}-v_p) = 0$ in $O_p \cap U_{\geq c}$ with zero boundary values on $\partial M$, one has $\tilde{v}-v_p|_{O_p \cap U_{\geq c}} = 0$ provided that:
\begin{equation} \label{op_u_connected}
\text{For any $x \in O_p \cap U_{\geq c}$, some smooth curve in $O_p \cap U_{\geq c}$ connects $x$ to $\partial M$.}
\end{equation}
To arrange this, extend $(M,g)$ and $f$ smoothly near $p$, let $(x',x_n)$ be semigeodesic coordinates for $f^{-1}(c)$ near $p$, and define 
\[
O_p = \{ (x',x_n) \,;\, \abs{x'} < \delta, \abs{x_n} < \delta \} \cap M.
\]
If $\delta > 0$ is small enough, the set 
\[
O_p \cap U_{\geq c} = \{ (x',x_n) \,;\, \abs{x'} < \delta, 0 \leq x_n < \delta \} \cap M
\]
will be connected, showing that \eqref{op_u_connected} holds. Thus we can glue the functions $\tilde{v}$ and $v_p$ to obtain a smooth extension, also denoted by $\tilde{v}$, to $U_{\geq c} \cup \overline{V}$ where $V$ is some neighborhood of $f^{-1}(c) \cap \partial M$, so that 
\[
h|_{U_{\geq c} \cup \overline{V}} = (X+A+\Phi)\tilde{v}, \quad \tilde{v}|_{(U_{\geq c} \cup \overline{V}) \cap \partial M} = 0.
\]

Now we consider the case where $p \in f^{-1}(c) \cap M^{\mathrm{int}}$. For any such $p$, the manifold $U_{\leq c}$ has a smooth and strictly convex boundary near $p$ since $df(p) \neq 0$. If $\eta$ is any short, close to tangential, geodesic in $U_{\leq c}$ near $p$, then strict convexity and Lemma \ref{lemma_exhaustion_boundary_geodesic} show that $\eta$ can be uniquely extended as a geodesic $\gamma: [0,T] \to M$ so that, for some $t_1, t_2$ with $0 < t_1 < t_2 < T$, 
\begin{align*}
 &\text{$\gamma|_{[0,t_1]}$ goes from $\partial M$ to $f^{-1}(c)$ in $U_{\geq c}$}, \\
 &\text{$\gamma|_{[t_1,t_2]}$ is a reparametrization of the short geodesic $\eta$ in $U_{\leq c}$, and } \\
 &\text{$\gamma|_{[t_2,T]}$ goes from $f^{-1}(c)$ to $\partial M$ in $U_{\geq c}$}.
\end{align*}
Since $\gamma$ stays in $U$, we have $(I_{\mathcal A} h)(\gamma) = 0$ by assumption. Choose an arbitrary smooth extension of $\tilde{v}$ to $M$, and also denote this extension by $\tilde{v}$. Then trivially $(I_{\mathcal A} ((X+A+\Phi)\tilde{v}))(\gamma) = 0$ for all such $\gamma$. It follows that 
\[
(I_{\mathcal A} (h - (X+A+\Phi)\tilde{v}))(\gamma) = 0
\]
for any $\gamma$ generated by a short geodesic $\eta$ in $U_{\leq c}$ near $p$. Since $h - (X+A+\Phi)\tilde{v}|_{U_{\geq c}} = 0$, this implies that 
\[
(I_{\mathcal A}^{(c)} (h - (X+A+\Phi)\tilde{v}|_{U_{\leq c}}))(\eta) = 0
\]
for any short geodesic $\eta$ near $p$, where $I_{\mathcal A}^{(c)}$ is the attenuated ray transform in $U_{\leq c}$ near $p$. The conditions of the local result, Theorem \ref{local connection}, are satisfied. Thus the local result implies that for any $p \in f^{-1}(c) \cap M^{\mathrm{int}}$ there exists a neighborhood $O_p$ in $U_{\leq c}$ and a function $v_p \in C^{\infty}(O_p)$ such that 
\[
h - (X+A+\Phi)\tilde{v}|_{O_p} = (X+A+\Phi) v_p, \quad v_p|_{O_p \cap f^{-1}(c)} = 0.
\]
If $(x',x_n)$ are semigeodesic coordinates for $f^{-1}(c)$ near $p$, after possibly shrinking $O_p$ we may assume that $O_p$ is of the form $\{ (x',x_n) \,;\, \abs{x'} < \delta, \ -\delta < x_n \leq 0 \}$ for some $\delta > 0$.

Write $w_p = \tilde{v} + v_p$ in $O_p$, and consider the function 
\[
\hat{v} = \left\{ \begin{array}{cl} \tilde{v} & \text{in $U_{\geq c} \cup \overline{V}$}, \\ w_p & \text{in $O_p$ with $p \in f^{-1}(c) \cap M^{\mathrm{int}}$}. \end{array} \right.
\]
If we can show that $\hat{v}$ is well defined in the overlaps and smooth, (2) will follow since then $\hat{v}$ is a smooth function in some neighborhood $W$ of $U_{\geq c}$ in $U_{> a}$ and satisfies 
\[
h|_W = (X+A+\Phi)\hat{v}, \quad \hat{v}|_{W \cap \partial M} = 0.
\]
Let $p \in f^{-1}(c) \cap M^{\mathrm{int}}$. Writing $(x',x_n)$ for semigeodesic coordinates for $f^{-1}(c)$ near $p$, it follows that 
\begin{align*}
\partial_{x_n} w_p + (A(\partial_{x_n}) + \Phi) w_p &= h(x,\partial_{x_n}), \qquad \text{$x_n < 0$}, \\
\partial_{x_n} \tilde{v} + (A(\partial_{x_n}) + \Phi) \tilde{v} &= h(x,\partial_{x_n}), \qquad \text{$x_n > 0$}, \\
w_p|_{x_n=0} &= \tilde{v}|_{x_n = 0}.
\end{align*}
Inductively we see that $\partial_{x_n}^j w_p|_{x_n=0} = \partial_{x_n}^j \tilde{v}|_{x_n=0}$ for all $j \geq 0$. This shows that $\hat{v}$ will be smooth across $f^{-1}(c)$ away from $\partial M$. It remains to show that if $p \in f^{-1}(c) \cap M^{\mathrm{int}} \cap \overline{V}$, then $w_p = \tilde{v}$ in $O_p \cap \overline{V}$. But since $O_p$ is of the form $\{ \abs{x'} < \delta, \ -\delta < x_n \leq 0 \}$ in semigeodesic coordinates and since $V$ was constructed as described after \eqref{op_u_connected}, we can connect any $x \in O_p \cap \overline{V}$ to $f^{-1}(c)$ by a smooth curve in $O_p \cap \overline{V}$. Thus $w_p$ and $\tilde{v}$ solve the same ODE along this curve with matching boundary values on $f^{-1}(c)$, which finally shows that $\hat{v}$ is well-defined and smooth. This concludes the proof of (2).

\bigskip

{\it Proof of (3).} Suppose that $(c,b) \subset I$ where $c > a$. The gluing procedure described after the conditions (1)--(3) ensures that there is $w \in C^{\infty}(U_{> c}, \mathbb C^N)$ so that 
\[
h = (d+A+\Phi) w \text{ in $U_{> c}$}, \quad w|_{U_{> c} \cap \partial M} = 0.
\]
It is enough to show that $w$ has an extension $\tilde{w} \in C^{\infty}(U_{\geq c}, \mathbb C^N)$. Denote by $W$ the  solution of 
\[
XW - W(A+\Phi) = 0 \text{ in $E_c$}, \quad W|_{E_c \cap \partial(SM)} = \mathrm{id}
\]
where $E_c = \{ (x,v) \in SM \,;\, x \in U_{\geq c}, \ df|_x(v) \geq 0 \}$. By Lemma \ref{lemma_exhaustion_boundary_geodesic}, $W$ is smooth in $E_c \setminus S(\partial M)$. Since $X(Ww) = W(X+A+\Phi)w = Wh$ in $SU_{> c} \cap E_c$, it follows that 
\[
w(x) = -W(x,v)^{-1} \int_0^{\tau(x,v)} (Wh)(\varphi_t(x,v)) \,dt, \quad x \in U_{> c}, \ df|_x(v) \geq 0.
\]
Denote by $\tilde{w}(x)$ the right hand side evaluated at $(x,v) = (x,Y(x))$ where $Y$ is a smooth vector field in $U_{\geq c}$ with $df(Y) \geq 0$ and $Y|_{\partial M}$ nontangential (one can take $Y = \nabla f/\abs{\nabla f}$ away from $\partial M$ and apply a partition of unity). Then $\tilde{w} \in C^{\infty}(U_{\geq c}, \mathbb C^N)$ satisfies $\tilde{w}|_{U_{> c}} = w$, giving the required extension.
\end{proof}

\begin{proof}[Proof of Theorem \ref{thm:maingloballinear}]
Let $U \subset M$ be a connected open set, let $f: U \to \mathbb R$ be a smooth strictly convex exhaustion function, and assume that $(I_{\mathcal A} h)(\gamma) = 0$ for any geodesic $\gamma$ in $U$ having endpoints on $\partial M$. Let $a = \inf f$ and $b = \sup f$.

Now Proposition \ref{prop:globallinear} proves the theorem if $U_{> a} = U$. By Lemma \ref{lemma_exhaustion_function_properties} the only other possible scenario is the case where $U = M$, $f$ is smooth and strictly convex in $M$, and $U_{> a} = M \setminus \{ z_0 \}$ where $z_0$ is the unique minimum point of $f$. Now $M$ is nontrapping by Lemma \ref{lemma_convex_anydimension}; since $\p M$ is strictly convex too, the fact that $I_{\mathcal A} h = 0$ and the regularity result \cite[Proposition 5.2]{PSU12} show that there exists $u\in C^{\infty}(SM)$ satisfying the  transport equation
\begin{equation}\label{transport equation on SM}
(X+A+\Phi)u=-h \ \ \text{in $SM$}, \quad u|_{\p SM}=0.
\end{equation}
Also by Proposition \ref{prop:globallinear} there is $p \in C^{\infty}(M \setminus \{ z_0 \})$ with 
\[
(X+A+\Phi)p = h \ \ \text{in $S(M \setminus \{ z_0 \})$}, \quad p|_{\partial M \setminus \{ z_0 \}} = 0.
\]
Thus $u+p$ satisfies 
\[
(X+A+\Phi)(u+p)=0\,\, \mbox{in}\,\, S(M\setminus \{z_0\}),\quad u+p|_{\p S(M \setminus \{z_0\})}=0.
\]
By Lemma \ref{lemma_exhaustion_boundary_geodesic}, for any $z \in M \setminus \{ z_0 \}$ there is a smooth geodesic in $M \setminus \{ z_0 \}$ connecting $z$ to the boundary. The equation for $u+p$ then implies that 
\[
u+p = 0 \text{ in $S(M \setminus \{ z_0 \})$}.
\]
Finally, since $p$ only depends on $x$, the same is true for $u$ in $S(M \setminus \{ z_0 \})$. Since $u$ is smooth in $SM$ we obtain that $u \in C^{\infty}(M)$, and returning to \eqref{transport equation on SM} gives that 
\[
h = (d+A+\Phi)\tilde{p} \ \ \text{ in $M$}, \ \ \tilde{p} \in C^{\infty}(M), \ \ \tilde{p}|_{\partial M} = 0
\]
with $\tilde{p} = -u$.
\end{proof}


\section{Nonlinear problem for connections and Higgs fields}\label{nonlinear connection}

Now we consider the nonlinear problem of determining a connection and a Higgs field from the corresponding scattering relation.
Theorem \ref{thm:mainglobal} follows from Theorem \ref{thm:maingloballinear} by introducing a pseudo-linearization. The argument
is carried out in detail in \cite[Section 8]{PSU12}. Instead of repeating it here, we shall explain how to use it to prove
the following nonlinear local problem.

\begin{thm}\label{scattering relation}
Assume $\p M$ is strictly convex at $p\in\p M$. Let $A$ and $B$ be two connections, let $\Phi$ and $\Psi$ be two Higgs fields, and write $\mathcal A=A+\Phi$ and $\mathcal B=B+\Psi$. If $C_{\mathcal A}=C_{\mathcal B}$ near $S_p\p M$, then there exist a neighborhood $O_p$ of $p$ in $M$ and a smooth $U: O_p\to GL(N,\mathbb C)$ with $U|_{O_p\cap \p M}={\rm id}$ such that $\mathcal B=U^{-1}dU+U^{-1}\mathcal AU$ in $O_p$.  
\end{thm}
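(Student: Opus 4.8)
\emph{Plan.} The plan is to carry out, in the present local setting, the pseudo-linearization of \cite[Section 8]{PSU12}: one encodes the nonlinear gauge equivalence into a \emph{linear} attenuated ray transform for a $GL(N^2,\mathbb C)$-pair and then invokes Theorem~\ref{local connection}. Concretely, I would work on matrix-valued sections, identify $\mathbb C^{N\times N}$ with $\mathbb C^{N^2}$, and introduce the pseudo-linearized pair: the $GL(N^2,\mathbb C)$-connection $\widehat A$ and Higgs field $\widehat\Phi$ defined by $\widehat A_x(v)M=A_x(v)M-MB_x(v)$ and $\widehat\Phi(x)M=\Phi(x)M-M\Psi(x)$, and $\widehat{\mathcal A}=\widehat A+\widehat\Phi$, so that $\widehat{\mathcal A}(x,v)M=\mathcal A(x,v)M-M\mathcal B(x,v)$. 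As in Section~\ref{one form}, I would first extend the metric and both $(A,\Phi)$ and $(B,\Psi)$ to $\widetilde M$, so that all parallel transports below are smooth and the mild non-smoothness of transport solutions at glancing directions plays no role.

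Next I would reduce the hypothesis to the vanishing of a linear transform. A direct computation shows that the fundamental solution of $\dot{\mathbb W}+\widehat{\mathcal A}(\gamma,\dot\gamma)\mathbb W=0$, $\mathbb W(0)=\mathrm{id}$, along a geodesic $\gamma$ acts by $\mathbb W(t)M=P_{\mathcal A}(\gamma|_{[0,t]})\,M\,P_{\mathcal B}(\gamma|_{[0,t]})^{-1}$, where $P_{\mathcal A},P_{\mathcal B}$ denote parallel transport for $\mathcal A$ and $\mathcal B$. Hence, for an $O_p$-local geodesic with both endpoints on $\partial M$, the matrix $W:=\mathbb W(\mathrm{id})$ solves $XW+\widehat{\mathcal A}W=0$ on $SO_p$, equals $\mathrm{id}$ at the entry point and equals $C_{\mathcal A}(\gamma)C_{\mathcal B}(\gamma)^{-1}$ at the exit point. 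Choosing $O_p$ small enough that every $O_p$-local geodesic has both endpoints (and directions) near $p$, the hypothesis $C_{\mathcal A}=C_{\mathcal B}$ near $S_p\partial M$ forces $W\equiv\mathrm{id}$ on all of $\partial(SO_p)$ along $O_p$-local geodesics. Now set $h:=\mathcal A-\mathcal B=(A-B)+(\Phi-\Psi)$, a smooth $\mathbb C^{N\times N}$-valued one-form plus function on $O_p$. Since the constant section $\mathrm{id}$ satisfies $(X+\widehat{\mathcal A})(\mathrm{id})=h$, the section $u:=W-\mathrm{id}$ satisfies $(X+\widehat{\mathcal A})u=-h$ with $u$ vanishing at \emph{both} endpoints of each $O_p$-local geodesic. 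Integrating $\tfrac{d}{dt}\!\left(W_{\widehat{\mathcal A}}(t)\,u(\gamma(t))\right)$ along $\gamma$ — exactly the computation showing $\{(d+\widehat A+\widehat\Phi)q:\ q|_{\partial M}=0\}$ lies in the kernel of $I_{\widehat{\mathcal A}}$ — all boundary terms cancel and one obtains $(I_{\widehat{\mathcal A}}h)(\gamma)=0$ for every $O_p$-local geodesic.

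Then I would apply the local linear result. By Theorem~\ref{local connection} for the pair $(\widehat A,\widehat\Phi)$ (with $N^2$ in place of $N$, possibly after shrinking $c$), there is $p\in H^1_{\loc}(O_p;\mathbb C^{N^2})$ with $p|_{O_p\cap\partial M}=0$ such that $h-(d+\widehat A+\widehat\Phi)p$ is stably determined by $I_{\widehat{\mathcal A}}h$; since $I_{\widehat{\mathcal A}}h=0$ this yields $h=(d+\widehat A+\widehat\Phi)p$ on $O_p$, and since $h$ is smooth and $d_{\widehat{\mathcal A}}$ is elliptic with trivial kernel under the zero Dirichlet condition (Section~\ref{one form}), elliptic regularity gives $p\in C^\infty(O_p;\mathbb C^{N\times N})$. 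Finally I would unravel this: the one-form part of $h=(d+\widehat A+\widehat\Phi)p$ reads $A-B=dp+Ap-pB$ and the function part reads $\Phi-\Psi=\Phi p-p\Psi$. Setting $U:=\mathrm{id}-p$, these become $dU=UB-AU$ and $\Phi U=U\Psi$, equivalently $U^{-1}dU+U^{-1}\mathcal A U=\mathcal B$, while $U|_{O_p\cap\partial M}=\mathrm{id}$. Shrinking $O_p$ once more so that $\|p\|<1$ there makes $U$ invertible in $O_p$, which completes the argument.

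I expect the genuinely hard analytic input — ellipticity of the localized normal operator in the correct Witten-type gauge in Melrose's scattering calculus — to be already contained in Theorem~\ref{local connection}. The main points that still require care here are: (i) the bookkeeping of the pseudo-linearization, in particular verifying the commutation identity for $\mathbb W$ and the two matching conditions at the end; (ii) the reduction of the \emph{local} scattering hypothesis (near $S_p\partial M$) to a statement about all $O_p$-local geodesics, which forces $O_p$ to be chosen small before the rest of the argument; and (iii) the final shrinking that guarantees invertibility of $U=\mathrm{id}-p$. The treatment of the non-smoothness of transport solutions along glancing directions is routine and identical to Section~\ref{one form}.
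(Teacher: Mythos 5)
Your proposal is correct and follows essentially the same route as the paper's own proof: the same pseudo-linearization $\widehat{\mathcal A}M=\mathcal A M-M\mathcal B$, the same reduction of the hypothesis $C_{\mathcal A}=C_{\mathcal B}$ near $S_p\partial M$ to the vanishing of $I_{\widehat{\mathcal A}}(\mathcal A-\mathcal B)$ on $O_p$-local geodesics, and the same appeal to Theorem \ref{local connection}. The only deviation is the endgame: the paper identifies $U=W_{\mathcal A}^{-1}W_{\mathcal B}=\mathrm{id}+V$ via uniqueness for the transport equation, so invertibility of $U$ is automatic, whereas you set $U=\mathrm{id}-p$ and shrink the neighborhood to ensure invertibility — which is equally acceptable since the theorem only asks for some neighborhood of $p$.
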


\begin{proof}
Let $\mathcal A=A+\Phi$ and $\mathcal B=B+\Psi$ be two pairs of connections and Higgs fields, and consider the matrix weights $W_Q,\, Q=\mathcal A,\,\mathcal B$, which satisfy $XW_Q=W_Q Q$ with $W_Q|_{\p_+SM}={\rm id}$. We define $U=W_{\mathcal A}^{-1}W_{\mathcal B}$, it is easy to check that $U$ satisfies the following transport equation
\begin{equation}\label{transport}
\left\{ \begin{array}{l}
  XU+\A U-U\B=0, \\
  U|_{\p_+SM}={\rm id}, \\
  U|_{\p_-SM}=C_{\A}^{-1}C_{\B}.
  \end{array} \right.
\end{equation}
We rewrite the above transport equation as
\[
X(U-{\rm id})+\A (U-{\rm id})-(U-{\rm id})\B=\B-\A,
\]
and define a new connection $\hat A$ as $\hat A(W)=AW-WB$ and a new Higgs field $\hat{\Phi}$ as $\hat{\Phi}=\Phi W-W \Psi$ for any $W\in C^{\infty}(SM;\mathbb C^{N\times N})$. Then for the pair $\hat{\A}=\hat A+\hat{\Psi}$
\[
X(U-{\rm id})+\hat{\A}(U-{\rm id})=\B-\A.
\]
Now take into account that $C_{\A}=C_{\B}$ near $S_p\p M$, thus $(U-{\rm id})(x,v)=0$ for $(x,v)\in \p SM$ near $S_p\p M$. This implies that
\[
\int \hat W(\gamma(t),\dot{\gamma}(t))(\B-\A)(\gamma(t),\dot{\gamma}(t))\,dt=0
\]
for $\gamma\in \mathcal M_{O_p}$, where $\hat W$ is the matrix attenuation associated with $\hat{\A}$.

Note that the entries of $\B-\A$ are pairs of 1-forms and functions. Thus by Theorem \ref{local connection}, there exists $V\in C^{\infty}(O_p;\mathbb C^{N\times N})$ with $V|_{O_p\cap \p M}=0$, such that $\B-\A=dV+\hat{\A} V$ in $O_p$. This implies that 
\[
(X+\hat{\A})(U-{\rm id} - V) = 0 \text{ in $SO_p$}, \quad U-{\rm id} - V|_{SO_p \cap \partial M} = 0.
\]
Consequently $U-{\rm id}=V$ only depends on $x$ in $O_p$, and by \eqref{transport}, $\B=U^{-1}dU+U^{-1}\A U$ with $U|_{O_p\cap\p M}={\rm id}$.
\end{proof}


We can give an alternative proof of the previous theorem using a pseudo-linearization approach as in \cite{SU98, SUV13}, which ends up giving the same types of integrals. Given a geodesic $\gamma: [0,T]\to M$, let $\phi(t)=(\gamma(t),\dot{\gamma}(t))$ be the corresponding geodesic flow on $SM$. Define the matrix-valued function
\[
F(t)=W_{\B}(\phi(t))W_{\A}^{-1}(\phi(t)).
\]
Thus 
\[
F'(t)=W_{\B}(\phi(t))({\B}(\phi(t))-{\A}(\phi(t)))W_{\A}^{-1}(\phi(t)).
\]
By the fundamental theorem of calculus 
\[
F(T)-F(0)=\int_0^T W_{\B}(\phi(t))(\B (\phi(t))-\A (\phi(t)))W_{\A}^{-1}(\phi(t))\,dt.
\]
By the assumption for $\gamma\in \mathcal M_{O_p}$, $W_{\A}(\phi(T))=W_{\B}(\phi(T))$, i.e.\ $F(T)=F(0)={\rm id}$, so we get
\[
\int_{\gamma} W_{\B}(\phi(t))(\B (\phi(t))-\A (\phi(t)))W_{\A}^{-1}(\phi(t))\,dt=0.
\]
We define the linear transformation (matrix) $\hat W$ by 
\[
\hat W U=W_{\B}UW_{\A}^{-1}, \qquad U\in C^{\infty}(SM;\mathbb C^{N\times N}).
\]
Then $\hat W$ is smooth near $S_p\p M$ and the above integrals over $\gamma$ actually state that the following weighted geodesic ray transform of $\B-\A$ vanishes:
\[
\int_{\gamma} \hat W(\B-\A)\,dt=0,\quad \gamma\in \mathcal M_{O_p}.
\]
Now to prove Theorem \ref{scattering relation}, we only need to show that $\hat W$ is associated with some connection. First by the definition of $\hat W$, it is obvious that $\hat W|_{\p_+SM}={\rm id}$. Given a matrix-valued function $U$ we have 
\begin{equation*}
(X\hat W)U+\hat W(XU)=X(W_{\B}UW_{\A}^{-1})=W_{\B}\B UW_{\A}^{-1}+W_{\B}(XU)W_{\A}^{-1}-W_{\B}U \A W_{\A}^{-1},
\end{equation*}
which implies that
\[
(X\hat W)U=\hat W(\B U-U \A).
\]
If we define $\hat{\A}$ by $\hat{\A}U=\B U-U\A$, we get exactly $X\hat W=\hat W\hat {\A}$. Theorem \ref{local connection} then finishes the proof.


\section{Further applications}
\label{section:furtherapplications}

\subsection{Quantum state tomography}

First we discuss an application of our results to {\it quantum state tomography} \cite{Il15}. Roughly speaking, quantum state tomography is concerned with the determination of a quantum mechanical Hamiltonian $H$ (a smooth function defined on the phase space and taking values in $\mathbb C^{N\times N}$) from the knowledge of time evolution of coherent quantum mechanical states through the quantum mechanical system. It has potential applications in geophysical imaging with neutrinos. Let $(M,g)$ be a compact Riemannian manifold with smooth boundary and $\gamma:[0,T]\to M$ be a geodesic. A point particle moving along the geodesic $\gamma$ is associated with a quantum mechanical state $\Psi^{\g}(t)\in \mathbb C^N$. The time evolution of the states is governed by the Schr\"odinger equation
\[
i\p_t\Psi^{\g}(t)=H(\gamma(t),\dot{\gamma}(t))\Psi^{\g}(t).
\]
Then there is a time evolution operator (nondegenerate, matrix-valued) $U^{\gamma}_H$ associated with the Hamiltonian $H$ along $\gamma$ such that 
\[
\Psi^{\g}(t_2)=U^{\gamma}_H(t_2,t_1)\Psi^{\g}(t_1), \quad t_1,t_2\in [0,T],
\]
in particular $U^{\gamma}_H(t,t)={\rm id}_{N\times N}$. If $\gamma$ is a geodesic connecting boundary points, i.e.\ $\gamma(0),\gamma(T)\in \p M$, we take all possible initial states $\Psi^{\g}(0)$ in $\mathbb C^{N}$ and measure the corresponding final states $\Psi^{\g}(T)$, then one can uniquely determine the time evolution matrix $U^{\g}_H(T,0)$. Note that for this case, we can also define the time evolution operator as $$U_H(\gamma(t),\dot{\gamma}(t)):=U^{\gamma}_H(t,0),$$
and $U_H|_{\p_+SM}={\rm id}_{N\times N}$. 
For the sake of simplicity we assume that we have enough data and we can measure these quantum states individually to determine the time evolution operator. Our result is about the local unique recovery of the Hamiltonian $H$ from the time evolution operator $U_H$. We restrict ourselves to the case where $H$ is a combination of matrix-valued functions and 1-forms on $M$. 
\begin{thm}\label{quantum}
Assume that $\dim(M) \geq 3$ and $\p M$ is strictly convex at $p\in\p M$. Let $H_1=A+\Phi$ and $H_2=B+\Psi$, where $A,B\in C^{\infty}(TM;\mathbb C^{N\times N})$ are linear in $v$ and $\Phi,\Psi\in C^{\infty}(M;\mathbb C^{N\times N})$, be two Hamiltonians with $U_{H_1}(x,v)=U_{H_2}(x,v)$ for $(x,v)\in \p_-SM$ near $S_p\p M$. Then there is a neighborhood $O_p$ of $p$ in $M$ and a smooth $U: O_p\to GL(N,\mathbb C)$ with $U|_{O_p\cap \p M}=Id$ such that $H_2=U^{-1}dU+U^{-1}H_1U$ in $O_p$.
\end{thm}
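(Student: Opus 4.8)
The plan is to reduce Theorem \ref{quantum} to Theorem \ref{scattering relation} by observing that the time evolution operator $U_H$ for the Schr\"odinger equation along geodesics is, up to a harmless sign/normalization, exactly the parallel transport object $C_{\mathcal A}$ studied earlier, so the local recovery result for connections and Higgs fields applies verbatim. Concretely, write $\mathcal A = A + \Phi$ and $\mathcal B = B + \Psi$, thinking of $H_1 = A+\Phi$ and $H_2 = B+\Psi$ as pairs of connections and Higgs fields (after possibly absorbing the factor $i$ from $i\partial_t \Psi^\gamma = H\Psi^\gamma$ into $\mathcal A$; this only rescales the pair and does not affect which $(A,\Phi)$ are allowed, since the local theorems place no restriction on the pair). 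Then $U_{H}(x,v)=U^\gamma_H(t,0)$ solves $\dot{U}+\mathcal A_{\gamma(t)}(\dot\gamma(t))U=0$ (again up to the factor $i$), with $U_H|_{\partial_+ SM}=\mathrm{id}$, which is precisely equation \eqref{eq:3}; hence the map $(x,v)\mapsto U_H(x,v)$ for $(x,v)\in\partial_-SM$ is the scattering data $C_{\mathcal A}$.

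First I would make this identification precise: define $W_{H_i}$ as the solution of the transport equation $XW_{H_i}=W_{H_i}\mathcal A_i$ on $SM$ with $W_{H_i}|_{\partial_+SM}=\mathrm{id}$, where $\mathcal A_i(x,v)=H_i(x,v)$ (with the $i$-factor absorbed), and note that $W_{H_i}(x,v)$ restricted to $\partial_-SM$ equals $C_{\mathcal A_i}(x,v)^{-1}$ or $C_{\mathcal A_i}(x,v)$ depending on the convention, with $U_{H_i}$ the inverse parametrization in $t$. The hypothesis $U_{H_1}(x,v)=U_{H_2}(x,v)$ for $(x,v)\in\partial_-SM$ near $S_p\partial M$ then translates directly into $C_{\mathcal A}=C_{\mathcal B}$ near $S_p\partial M$ in the sense of Theorem \ref{scattering relation}. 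Second, I would invoke Theorem \ref{scattering relation} (whose hypotheses --- strict convexity of $\partial M$ at $p$, $\dim M\geq 3$, and the $\mathbb C^N$-valued function-plus-one-form structure of $A,\Phi,B,\Psi$ --- are all met): it yields a neighborhood $O_p$ of $p$ and a smooth $U:O_p\to GL(N,\mathbb C)$ with $U|_{O_p\cap\partial M}=\mathrm{id}$ and $\mathcal B=U^{-1}dU+U^{-1}\mathcal A U$ in $O_p$. Splitting this gauge identity into its $v$-linear part and its $v$-independent part gives $B=U^{-1}dU+U^{-1}AU$ and $\Psi=U^{-1}\Phi U$, which is exactly the assertion $H_2=U^{-1}dU+U^{-1}H_1U$ in $O_p$.

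The only genuinely nontrivial point is bookkeeping: one must check carefully that the $i$ in the Schr\"odinger equation, and the direction in which time runs versus the direction of geodesic parametrization used to define $\partial_\pm SM$, are handled consistently, so that the data equality on $\partial_-SM$ really matches the hypothesis of Theorem \ref{scattering relation} rather than some twisted version of it. This is the main (and essentially the only) obstacle, and it is a matter of careful unwinding of definitions rather than of new analysis. Once the dictionary $U_H \leftrightarrow C_{\mathcal A}$ is in place, the theorem follows immediately. (Alternatively, one could bypass the reduction and rerun the pseudo-linearization argument of Theorem \ref{scattering relation} directly for $U_{H_1}, U_{H_2}$: form $U=W_{H_1}^{-1}W_{H_2}$, derive the transport equation $XU+\mathcal A U-U\mathcal B=0$ with $U|_{\partial_+SM}=\mathrm{id}$ and $U|_{\partial_-SM}=U_{H_1}^{-1}U_{H_2}=\mathrm{id}$ near $S_p\partial M$, and apply Theorem \ref{local connection} to $\mathcal B-\mathcal A$ exactly as before.)
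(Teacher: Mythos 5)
Your reduction is exactly the paper's proof: the paper disposes of Theorem \ref{quantum} in one line by observing that $U_H$ is the scattering data of the pair $H=A+\Phi$ and that the argument of Theorem \ref{scattering relation} (pseudo-linearization $U=W_{H_1}^{-1}W_{H_2}$ plus Theorem \ref{local connection}) applies verbatim. The $i$-factor bookkeeping you flag is glossed over in the paper as well (it implicitly adopts the transport convention $\dot U + HU=0$ rather than literally $i\partial_t\Psi=H\Psi$), so your proposal matches the intended argument, and your parenthetical "rerun the transport-equation argument directly" is precisely what the paper does.
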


The proof of Theorem \ref{quantum} is almost identical to that for Theorem \ref{scattering relation}. If one replaces $\A$ by a Hamiltonian $H$, then $U_H$ gives the corresponding ''scattering relation'' for $H$. Similarly a global determination result holds under the foliation condition.

A related more realistic problem was considered in \cite{Il15} for the case that $H$ is a function in a domain in Euclidean space. Our approach generalizes some results of \cite{Il15} to the Riemannian setting and to more general Hamiltonians.

\subsection{Polarization tomography}

Another application of our main results is related to {\it polarization tomography}. This inverse problem arises in optical tomography of slightly anisotropic media, and it consists of recovering the anisotropic part of a quasi-isotropic medium from polarization measurements made around the boundary. The anisotropic part is represented by a complex matrix function $f$. We consider the problem in the Riemannian setting, and $f\in T_{1,1}^{\mathbb C}M$ is a complexified $(1,1)$ tensor on a Riemannian manifold $(M,g)$ with boundary. Locally $f$ can be viewed as a complex matrix function whose size is $n\times n$ with $n=\mbox{dim}\, M$.

Now given $f\in T_{1,1}^{\mathbb C}M$, 
we consider the following equation of a complex vector field $\eta\in T^{\mathbb C}M$ along a maximal geodesic $\gamma:[0,T]\to M$, 
\begin{equation*}\label{polarization 1}
D_t \eta(t)=(P_{\gamma(t),\dot{\gamma}(t)}f)\eta(t),
\end{equation*}
where $D_t$ is the covariant derivative along $\gamma(t)$ and $P_{z,v}f=\pi_{z,v} f \pi_{z,v}$ with $\pi_{z,v}$ the orthogonal projection onto the subspace of $T_z^{\mathbb C}M$ perpendicular to $v$. Given the initial vector $\eta(0)=\eta_0$, we measure the value at the endpoint $\eta(T)$. The inverse problem is to recover $f$ from the polarization measurements $\eta(T)$ for all possible $\gamma$ and $\eta_0$.

Similarly as in quantum state tomography, we may reformulate the problem by introducing a time evolution operator $U$ on $SM$ which satisfies the transport equation
\begin{equation*}\label{polarization 2}
XU(z,v)=(P_{z,v}f)U(z,v), \quad U|_{\p_+SM}={\rm id}.
\end{equation*}
It is convenient to consider $U$ as a $(1,1)$ tensor field that depends on $(z,v)\in SM$, so locally 
$$U(z,v)=U^i_j(z,v)\p_{z^i}\otimes dz^j,$$
which can be viewed as a matrix function on $SM$. We call $U|_{\p_-SM}$ the polarization data, so the problem can be formulated as recovering $f$ from $U|_{\p_-SM}$. Notice that the above problem is also a nonlinear inverse problem, in particular the dependence of $P_{z,v}$ on $v$ is also nonlinear. This problem has been studied in \cite{NS07, Sh94, Ho13} by pseudo-linearizations as in Section \ref{nonlinear connection}. However, in dimension $3$ there is a natural obstruction to the unique determination of the global problem, see \cite{NS07}. Here we only consider the local problem in the case where $\dim(M) \geq 5$.

By pseudo-linearization, given $f_1,\, f_2\in T^{\mathbb C}_{1,1}M$ and the values of corresponding $U_1,\, U_2$ at $\p_-SM$, we have the following integral identity for any geodesic $\gamma=\gamma_{z,v}$ and $T\in [0,\tau(z,v)]$ with $(z,v)=(\gamma(0),\dot{\gamma}(0))\in \p_+SM$ and $\tau(z,v)$ the positive exit time of $\gamma_{z,v}$
\begin{align*}
U_2^{-1}U_1 & (\gamma(T), \dot{\gamma}(T))-{\rm id}\\
&=\int_0^T \Upsilon^{\gamma}_{t,T}\Big[U^{-1}_2(\gamma(t),\dot{\gamma}(t))(P_{\gamma(t),\dot{\gamma}(t)}(f_1-f_2))(\gamma(t))U_1(\gamma(t),\dot{\gamma}(t))\Big]\,dt
\end{align*}
where $\Upsilon^\gamma_{t_1,t_2}$ is the parallel transport of tensors along the geodesic $\gamma$ from $\gamma(t_1)$ to $\gamma(t_2)$.
This reduces the nonlinear problem to inverting the weighted geodesic ray transform 
\begin{equation*}
I_{U_1,\,U_2}f(\gamma):=\int_{\gamma}\Upsilon^\gamma_{t,\tau(z,v)}\Big[U^{-1}_2(\gamma(t),\dot{\gamma}(t))(P_{\gamma(t),\dot{\gamma}(t)}f)(\gamma(t))U_1(\gamma(t),\dot{\gamma}(t))\Big]\,dt,
\end{equation*}
see also \cite[Section 3]{NS07}. The microlocal properties of the normal operator $N_{U_1,\,U_2}=I^*_{U_1,\,U_2}\circ I_{U_1,\,U_2}$ were studied in \cite{Ho13}. On simple manifolds, $N_{U_1,\,U_2}$ is a pseudodifferential operator of order $-1$, in particular it is elliptic if dim $M\geq 4$. As in the nonlinear problem, $I_{U_1,\,U_2}$ has a nontrivial kernel in dimension 3. 

Define $W(z,v)=\Upsilon^{\gamma_{z,v}}_{0,\tau(z,v)}\Big[U^{-1}_2(z,v)(P_{z,v}\,\cdot\,)U_1(z,v)\Big]$ on $SM$, we consider the following operator similar to \eqref{N_F function} near a strictly convex boundary point:
\begin{equation*}
(N_Ff)(x,y)=x^{-2}e^{-F/x}\int_{\R}\int_{\S^{n-2}}W^*(x,y,\l,\o) (I_{U_1,\,U_2} e^{F/x}f)(\xy)\chi(\l/x)\, d\l d\o.
\end{equation*}
So $N_F$ is a scattering pseudodifferential operator of order $(-1,0)$. Recall the proof of Proposition \ref{interior elliptic} and \ref{boundary elliptic}, we want to prove the ellipticity of $N_F$, since $U_1$ and $U_2$ are invertible, it suffices to show that for an arbitrary covector $(\xi,\eta)$, non-zero $f$ and $v\in T_{x,y}^{\mathbb C}M$ with $f(v)\neq 0$ there exists some $\hat Y\in \eta^{\perp}\cap\mathbb S^{n-2}$ such that $(P_{x,y,0,\hat Y}f)v\neq 0$.

Now if the dimension satisfies $n\geq 5$, we can always find $\hat Y\in \eta^{\perp}\cap\mathbb S^{n-2}$ such that $(0,\hat Y)\perp\,\mbox{span}\,\{v, f(v)\}$ so
\[
(P_{x,y,0,\hat Y}f)v=\pi_{x,y,0,\hat Y}f\pi_{x,y,0,\hat Y}(v)=f(v)\neq 0,
\]
which proves the ellipticity. We thus obtain the following local result for polarization tomography.

\begin{thm}
Assume $\dim(M) \geq 5$ and $\p M$ is strictly convex at $p\in\p M$. Let $f_1$ and $f_2$ be complex matrix-valued functions on $M$ with $U_1(z,v)=U_2(z,v)$ for $(z,v)\in \p_-SM$ near $S_p\p M$. Then there exists a neighborhood $O_p$ of $p$ in $M$ such that $f_1=f_2$ in $O_p$.
\end{thm}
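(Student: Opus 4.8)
The plan is to imitate the proof of the nonlinear local result, Theorem~\ref{scattering relation}: use the pseudo-linearization recalled above to reduce the claim to local injectivity of a matrix-weighted geodesic ray transform, and then invert the associated normal operator in Melrose's scattering calculus exactly as in the proof of Theorem~\ref{local function}. Concretely, I would first fix a small neighborhood $O_p$ of $p$ in $M$ so that every $O_p$-local geodesic has both of its endpoints in the part of $\partial M$ where $U_1 = U_2$. Applying the integral identity displayed above with $T = \tau(z,v)$, the hypothesis forces $U_2^{-1}U_1(\gamma(\tau),\dot\gamma(\tau)) = \mathrm{id}$ for every $\gamma \in \mathcal M_{O_p}$, so that
\[
I_{U_1,U_2}(f_1-f_2)(\gamma) = 0 \qquad \text{for all } \gamma \in \mathcal M_{O_p},
\]
where $I_{U_1,U_2}$ is the weighted transform whose weight is the operator $W(z,v) = \Upsilon^{\gamma_{z,v}}_{0,\tau(z,v)}\bigl[U_2^{-1}(z,v)(P_{z,v}\,\cdot\,)U_1(z,v)\bigr]$ on $\mathbb C^{n\times n}$ introduced above (as in Section~\ref{one form} we may first extend the metric and $f_1,f_2$ to $\widetilde M$ so that $W$ becomes smooth on $SM$). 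Since $f_1-f_2$ is a matrix-valued function carrying no one-form component, there is no natural kernel to quotient out: the setting is that of Theorem~\ref{local function}, except that $W$ fails to be invertible and the dimensional hypothesis must take its place. Thus it suffices to prove that $I_{U_1,U_2}$, restricted to $O_p$-local geodesics, is injective on functions supported near $p$.

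To that end I would form the conjugated normal operator $N_F$ attached to the artificial boundary $\tilde x = -c$ and to the weight $W$, built as in \eqref{N_F function} with an even cutoff $\chi(\lambda/x)$. As already noted, $N_F \in \Psi^{-1,0}_{sc}(\Omega)$, with boundary Schwartz kernel of the form \eqref{boundary} in which the scalar $(W^*W)(0,y,0,\hat Y)$ is now the Hermitian, positive semidefinite endomorphism of $(1,1)$-tensors induced by $W$. Ellipticity at fiber infinity and at finite points of $^{sc}T^*\Omega$ over $\{x = 0\}$ then follows by repeating the computations of Propositions~\ref{interior elliptic} and~\ref{boundary elliptic} verbatim, the only new input being the linear-algebra fact singled out in the discussion preceding the statement: for any covector $(\xi,\eta)$ and any nonzero $f$ there is $\hat Y$ in the relevant equatorial sphere with $P_{0,y,0,\hat Y}f \neq 0$ — indeed, choosing $v$ with $f(v) \neq 0$ and $\hat Y \in \eta^\perp \cap \mathbb S^{n-2}$ with $(0,\hat Y) \perp \operatorname{span}\{v,f(v)\}$, which is possible precisely when $n \geq 5$, gives $(P_{0,y,0,\hat Y}f)v = f(v) \neq 0$. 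For the finite-point symbol one runs the Gaussian-$\chi$ computation of Proposition~\ref{boundary elliptic} (using $\alpha(0,y,0,\hat Y) > 0$), extracts the lower bound $c\langle(\xi,\eta)\rangle^{-1}$ by splitting according to whether $|\eta|/\langle\xi\rangle$ is bounded or not, and then passes to a compactly supported even $\chi$ by the usual approximation argument.

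With $N_F \in \Psi^{-1,0}_{sc}(\Omega)$ elliptic near the artificial boundary, the argument finishing the proof of Theorem~\ref{local function} (Fredholm theory in the scattering calculus together with the support condition, obtained by multiplying $f_1-f_2$ by a cutoff supported in $\{\rho \geq 0\}$) shows that the vanishing of $I_{U_1,U_2}(f_1-f_2)$ on $\mathcal M_{O_p}$ implies $f_1 = f_2$ on every compact subset of $O_p$ away from $\{\tilde x = -c\}$; after shrinking $O_p$ this is the assertion. I expect the main obstacle to be the boundary principal symbol: unlike the scalar case the weight $W^*W$ is only positive \emph{semidefinite} — its kernel being the $(1,1)$-tensors annihilated by every compression $P_{0,y,0,\hat Y}$ — so the positivity of $\sigma_{sc}(N_F)$ must be wrung out of the integration over $\mathbb S^{n-2}$, and this is exactly the point at which the hypothesis $\dim M \geq 5$ is unavoidable (the transform already has a nontrivial kernel in dimension $3$, so no such argument can succeed there).
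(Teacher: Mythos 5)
Your proposal is correct and follows essentially the same route as the paper: pseudo-linearization of the polarization data to conclude $I_{U_1,U_2}(f_1-f_2)=0$ on $\mathcal M_{O_p}$, then ellipticity of the conjugated normal operator $N_F\in\Psi^{-1,0}_{sc}$ proved by rerunning Propositions \ref{interior elliptic} and \ref{boundary elliptic}, with $\dim M\geq 5$ entering exactly through the choice of $\hat Y\in\eta^{\perp}\cap\mathbb S^{n-2}$ with $(0,\hat Y)\perp\operatorname{span}\{v,f(v)\}$ so that $(P_{x,y,0,\hat Y}f)v=f(v)\neq 0$. Your closing observation that the weight is only positive semidefinite, so positivity of the symbol must come from the $\mathbb S^{n-2}$ integration rather than invertibility of $W$, is precisely the point the paper relies on as well.
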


We can also prove a global theorem in this direction (we omit the proof).

\begin{thm} Let $(M,g)$ be a compact Riemannian manifold of dimension $\geq 5$ and strictly convex boundary.
 Suppose $(M,g)$ admits a smooth strictly convex function. Let $f_{1}$ and $f_2$ be two complex matrix-valued functions on $M$ with
 the same polarization data, i.e. $U_{1}(x,v)=U_{2}(x,v)$ for $(x,v)\in \partial_{-}(SM)$. Then $f_{1}=f_{2}$.
\end{thm}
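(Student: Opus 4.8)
The plan is to mimic the proof of Theorem \ref{thm:maingloballinear}, combining the pseudo-linearized identity obtained above with the layer stripping of Section \ref{sec_proof_global}; the argument is in fact a simplification of that proof, since a $(1,1)$ tensor carries no gauge ambiguity. First I would set $f = f_1 - f_2$ and take $T = \tau(z,v)$ in the integral identity displayed above: its left side equals $U_2^{-1}U_1$ at the exit point of $\gamma$, which lies in $\partial_-(SM)$, and since $U_1 = U_2$ there the left side is $\mathrm{id}$; hence $I_{U_1,U_2} f(\gamma) = 0$ for every maximal geodesic $\gamma$ in $M$, i.e.\ the weighted geodesic ray transform of $f$ with weight $W$ vanishes on all of $\mathcal M$. (The existence of a strictly convex function makes $M$ nontrapping, so the identity makes sense over $[0,\tau(z,v)]$.)

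Since $(M,g)$ admits a smooth strictly convex function $\varphi$, Lemma \ref{lemma_convex_anydimension} gives that $M$ is nontrapping and contractible and that $\varphi$ has a unique minimum point $z_0$, which is its only critical point; write $a = \inf\varphi$, $b = \sup\varphi$ (attained only on $\partial M$) and $M_{\geq t} = \{\varphi \geq t\}$. I would then run the layer stripping of Proposition \ref{prop:globallinear} with $I = \{ t \in (a,b) \,;\, f|_{M_{\geq t}} = 0 \}$, establishing: (1) $(b-\varepsilon,b) \subset I$, since $\varphi^{-1}(b) \subset \partial M$ and the local polarization result above — which, having no gauge term, recovers $f$ itself near each $p \in \varphi^{-1}(b)$ — together with compactness forces $f = 0$ near $\varphi^{-1}(b)$; (2) if $c \in I$ then $(c-\varepsilon,c) \subset I$, treating $p \in \varphi^{-1}(c) \cap \partial M$ directly by the local result and $p \in \varphi^{-1}(c) \cap M^{\mathrm{int}}$ by extending $(M,g)$ and $\varphi$ past $p$, noting that $\varphi^{-1}(c)$ is a strictly convex hypersurface bounding $M_{\leq c}$ near $p$, and observing that a short geodesic $\eta$ near $p$ in $M_{\leq c}$ extends to a maximal geodesic $\gamma$ whose arcs in $M_{\geq c}$ contribute nothing to $I_{U_1,U_2} f(\gamma) = 0$ since $f$ vanishes there, so that the localized transform of $f|_{M_{\leq c}}$ vanishes near $p$ and the local result yields $f = 0$ near $p$; (3) if $(c,b) \subset I$ with $c > a$ then $c \in I$, which is immediate here because $f$ is continuous and vanishes on $M_{>c}$, hence on $\overline{M_{>c}} \supset M_{\geq c}$. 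By connectedness $I = (a,b)$, so $f = 0$ on $M_{>a}$.

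By Lemma \ref{lemma_exhaustion_function_properties} one has $M_{>a} = M \setminus \{z_0\}$, and since $f \in C^{\infty}(M)$ vanishes on the dense set $M \setminus \{z_0\}$, continuity gives $f(z_0) = 0$ as well, so $f_1 = f_2$ on $M$. In contrast to the proof of Theorem \ref{thm:maingloballinear}, no transport-equation regularity result is needed to reach the minimum point, precisely because $f$ is recovered directly rather than up to a gauge function that might fail to extend.

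The step I expect to require the most care is (2) at interior points: one must check that the localized weighted transform near $p \in \varphi^{-1}(c) \cap M^{\mathrm{int}}$ still fits Melrose's scattering calculus as in Section \ref{function}, i.e.\ that the weight $W$ — now assembled from the restrictions of $U_1, U_2$ and of the parallel transport $\Upsilon$ to the extended local piece — retains the invertibility of the $U_i$ factors and the projection structure $P_{z,v}$ responsible for ellipticity when $\dim M \geq 5$. This is the same bookkeeping as in Proposition \ref{prop:globallinear}(2) with a more elaborate weight; once it is in place, the remainder is a routine transcription of arguments already present in the paper.
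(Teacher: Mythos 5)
The paper states this theorem with the proof omitted, so there is no written argument to compare against; your proposal is precisely the globalization the paper intends: pseudo-linearize to get $I_{U_1,U_2}(f_1-f_2)(\gamma)=0$ for every maximal geodesic (nontrapping coming from the strictly convex function), then run the layer stripping of Proposition \ref{prop:globallinear} using the local ellipticity of the polarization weight in dimension $\geq 5$, and your observation that the absence of a gauge term removes both the gluing work in step (2) and the transport-equation regularity argument at the minimum point is correct and is the genuine simplification here.

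One step is stated too quickly. In (3) you use the containment $M_{\geq c}\subset \overline{M_{>c}}$, and this can fail: a smooth strictly convex $\varphi$ on a compact $M$ with strictly convex boundary may have a boundary point $p$ which is a strict local maximum of $\varphi|_M$ with $\varphi(p)=c<b$ (take $M$ an ellipsoid and $\varphi$ the squared Euclidean distance to an interior point offset along the major axis; the nearer vertex of the major axis is such a point, since the gradient there points outward and it is a local maximum of $\varphi|_{\partial M}$). At such a point $p\in M_{\geq c}\setminus\overline{M_{>c}}$, continuity alone does not give $f(p)=0$. The gap is harmless: any point of $\varphi^{-1}(c)$ not in $\overline{M_{>c}}$ must lie on $\partial M$ (at an interior point $d\varphi\neq 0$ produces nearby points with larger values), and the local boundary result — which you already invoke in steps (1) and (2), and which applies at every point of the strictly convex $\partial M$ independently of the induction since $U_1=U_2$ on all of $\partial_-(SM)$ — gives $f=0$ on a fixed neighborhood of $\partial M$. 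Recording this once at the outset repairs (3). The remaining point you flag, namely that the scattering-calculus ellipticity still applies at interior artificial hypersurfaces with the restricted weight (invertible $U_i$ and $\Upsilon$ conjugating the projection $P_{z,v}$, smoothness of the weight coming from transversal exit of the extended geodesics), is indeed the part requiring care, and it parallels exactly the way Theorem \ref{local connection} is used inside Proposition \ref{prop:globallinear}.
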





\end{document}